\newcommand{\N}{{\mathbb N}}
\newcommand{\R}{{\mathbb R}}
\newcommand{\E}{{\mathbb E}}
\newcommand{\pee}{{\mathbb P}}
\newcommand{\C}{{\EuScript C}}
\def\d{\textup{d}}
\newcommand{\tX}{\tilde{X}}
\newcommand{\tS}{\tilde{S}}
\let\eps=\varepsilon
\let\epsilon=\varepsilon
\newcommand{\mi}{{\mathrm I}}
\newcommand{\mauvais}{\mathcal{M}}
\newcommand{\tresmauvais}{\widetilde{\mathcal{M}}}
\newcommand{\trestresmauvais}{\widehat{\mathcal{M}}}
\newcommand{\attrac}{\mathscr{A}}
\newcommand{\dimu}{n_{u}}
\newcommand{\dims}{n_{s}}
\newcommand{\Oun}{\mathcal{O}(1)}
\newcommand{\card}{\textup{Card}}
\newcommand{\leb}{\mathrm{Leb}}
\newcommand{\legu}{\Gamma^{u}}
\newcommand{\legs}{\Gamma^{s}}
\newcommand{\cour}{\C_{r,\delta}(x)}
\newtheorem*{maintheorem}{{\small M}{\scriptsize AIN THEOREM}}
\newtheorem{theorem}{{\small T}{\scriptsize HEOREM}}[section]
\newtheorem{corollary}{{\bf{\small C}{\scriptsize OROLLARY}}}[section]
\newtheorem{proposition}{{\bf{\small P}{\scriptsize ROPOSITION}}}[section]
\newtheorem{lemma}{{\bf{\small L}{\scriptsize EMMA}}}[section]
\renewenvironment{proof}[1]
{\noindent{\bf{\small P}{\scriptsize ROOF}}.\hspace{0.1cm} #1} {$\;\blacksquare$\newline}
\def\1{{\mathchoice {\rm 1\mskip-4mu l} {\rm 1\mskip-4mu l}{\rm 1\mskip-4.5mu l} {\rm 1\mskip-5mu l}}}
\begin{document}

\title{Poisson approximation\\ for the number of visits to balls\\
in nonuniformly hyperbolic dynamical systems}
\author{{\bf J.-R. Chazottes, P. Collet}\\
{\small Centre de Physique Th\'eorique}\\
{\small CNRS-\'Ecole Polytechnique}\\
{\small 91128 Palaiseau Cedex, France}}
\date{}

\maketitle

\abstract{
We study the number of visits to balls $B_r(x)$,
up to time $t/\mu(B_r(x))$, for a class of non-uniformly hyperbolic dynamical systems,
where $\mu$ is the SRB measure.
Outside a set of `bad' centers $x$, we prove that this number is
approximately Poissonnian with a controlled error term.
In particular, when $r\to 0$, we get convergence to the Poisson
law for a set of centers of $\mu$-measure one.
Our theorem applies for instance to the H\'enon attractor and, more generally,
to systems modelled by a Young tower whose return-time function has
a exponential tail and with one-dimensional unstable manifolds.
Along the way, we prove an abstract Poisson approximation result of independent
interest.
\bigskip

\noindent{\bf Keywords}: exponential decay of correlations, 
Axiom A attractor, dispersing billiards, H\'enon attractor, piecewise hyperbolic maps.
}

\newpage

\tableofcontents

\section{Introduction and main result}

Consider a discrete-time, ergodic dynamical system $(M,\mu,T)$ where $M$ is a
compact space and $T:M\to M$ is a map preserving the probability measure $\mu$. 
Let $U$ be a subset of $M$. If $\mu(U)>0$, ergodicity ensures that the orbit of
$\mu$-almost every $x\in M$ visits $U$ infinitely many times. Moreover, once an
orbit hits $U$, the time between two consecutive visits is of order $1/\mu(U)$ (this
is a loose interpretation of Ka\v{c} lemma).

We are interested in the distribution of the number of times an orbit
visits a set $U$ with positive measure between time $0$ and $t/\mu(U)$, that is,
the integer-valued random variable 
$$
\sum_{j=0}^{\lfloor t/\mu(U)\rfloor} \1_{\scriptscriptstyle{U}}\circ T^j
$$
on the probability space $(M,\mu)$.

Sets of evident interest are balls $B_r(x)$ of center $x$ and radius $r$ and one
expects that, for ``small'' $r$, the number of visits up to time 
$\lfloor t/\mu(B_r(x))\rfloor$ be approximately distributed according to a Poisson law,
provided correlations decay fast enough and for ``typical'' points $x$.

In the present article, we obtain such a Poisson approximation for a large class of
non-uniformly hyperbolic dynamical systems modelled by a Young tower whose
return-time function has a exponential tail. Postponing the precise definition
of this class to Section \ref{NUDS}, let us state our main theorem. A more precise statement
is given in Theorem \ref{nuhpoisson}. 

\medskip

\begin{maintheorem}
Let  $(M,T,\mu)$ be a non-uniformly hyperbolic dynamical system
modelled by a Young tower whose return-time function has a exponential tail.
Assume that the local unstable manifolds have dimension one.
Denote by $\mu$ its SRB measure. Then there exist constants $C,a,b>0$
such that for all $r\in(0,1)$:
\begin{itemize}
\item There exists a set $\trestresmauvais_r$ such that 
$$
\mu(\trestresmauvais_r)\leq C r^b\, ;
$$
\item For all $x\notin \trestresmauvais_r$ one has
$$
%\begin{equation}\label{approxpoisson}
\left|
\mu\left\{y\in M  \; \Bigg| \: 
\sum_{j=0}^{\lfloor t/\mu(B_r(x))\rfloor} \1_{\scriptscriptstyle{B_r(x)}}(T^j y)
=k\right\}-\frac{t^k}{k!}\thinspace e^{-t} \right|\leq C\ r^{a},
$$
%\end{equation}
for every integer $k\geq 0$ and for every $t>0$. 
\end{itemize}
\end{maintheorem}

\bigskip

\noindent Let us make some comments on this theorem.\newline
The preceding statement immediately implies that, for $\mu$-a.e. center $x$,
\begin{equation}\label{poisson}
\mu\left\{y\in M  \; \Bigg| \: 
\sum_{j=0}^{\lfloor t/\mu(B_r(x))\rfloor} \1_{\scriptscriptstyle{B_r(x)}}(T^j y)
=k\right\}
\xrightarrow{r\to 0}
\frac{t^k}{k!}\thinspace e^{-t}\,. 
\end{equation}
A crucial ingredient in our proof is an estimate of the measure of spherical coronas.
This estimate relies on several general consequences of Besicovitch's covering lemma
of independent interest and seem to be new. This allows us to get explicit estimates
on the error term and on the measure of the set of `bad' centers.\newline
The assumption that unstable manifolds are one-dimensional is likely to be technical.\newline
What we control is in fact the total variation distance between the law of
$\sum_{j=0}^{\lfloor t/\mu(B_r(x))\rfloor} \1_{\scriptscriptstyle{B_r(x)}}\circ T^j$
and the Poisson law, see Theorem \ref{nuhpoisson} below.\newline
The class of dynamical systems we consider was defined in \cite{young1}.
It contains among others Axiom A attractors, the H\'enon attractor for ``good parameters", some dispersing
billiard maps ({\em e.g.}, the periodic Lorentz gas), and piecewise hyperbolic maps of the plane
({\em e.g.}, Lozi attractor).\newline
Let us briefly comment on the results which were available so far. 
There has been a great deal of work in establishing
\eqref{poisson}, and quite often only for $k=0$. Most results were obtained for cylinder sets for some partition,
see {\em e.g.} \cite{av,bv,hirata1,hsv,hv1} and reference therein. The systems considered are
 `mixing' processes on finite alphabets, interval maps, or Axiom A systems. 

There are of course many multidimensional systems for which a Poisson law
is expected. Besides, it is very natural to consider balls
(with respect to the distance on the manifold) instead of cylinders.
Regarding visits to balls for one-dimensional systems ({\em i.e.} intervals),
the first result seems to be found in \cite{pa} for uniformly expanding maps.
Then several types of non-uniformly expanding maps
on the interval  ({\em e.g.} parabolic maps, maps with neutral fixed points)
were handled in \cite{bstv,bv,pierre,colletgalves,fft,hnt}.\newline
In higher dimension,  only a few results are available for balls up to date. 
Dolgopyat \cite{dimitry} established under adequate assumptions a Poisson law for a class of uniformly partially
hyperbolic systems, including Anosov diffeomorphisms. 
Our proof works directly for the case of Axiom A attractors with one-dimensional unstable manifolds.
In \cite{dgs}, the Poisson law is established but only for hyperbolic toral
automorphisms which leave invariant the Haar (Lebesgue) measure.
P\`ene and Saussol \cite{ps} studied return times for the so-called periodic
Lorentz gas with `finite horizon', that is, a planar billiard with periodic configurations of scatterers.
They obtain a convergence in distribution to the exponential law for the rescaled return times to balls.
Finally,  the authors of \cite{ghn} prove convergence towards an exponential law
for balls in certain two-dimensional non-uniformly hyperbolic dynamical systems 
modelled by a Young tower whose return-time function has a exponential tail. But their axioms
do not allow to capture the H\'enon attractor.

\bigskip

\noindent{\bf Content of the article}. 
In Section \ref{generalpoisson} we establish an abstract Poisson approximation bound
for sums of  $\{0,1\}$-valued dependent random variables.  
In Section \ref{NUDS} we describe the class of non-uniformly hyperbolic dynamical systems
we deal with. Then, in Section \ref{preuves} we apply our abstract theorem and control
the error-term for that class of systems. There is an appendix collecting a number
of lemmas, some of them being of general interest.

%%%%%%%%%%%%%%%%%%%%%%%%%%%%%%%
\section{An abstract Poisson approximation result}\label{generalpoisson}

In the sequel, we denote by $\1_A$ the indicator function of a set $A$.
We recall that if $Y$ and $Z$ are random variables taking integer 
values, their total variation distance is given by
$$
d_{\mathrm{\scriptscriptstyle TV}}\big(Y,Z\big)=\frac12 \sum_{k=0}^\infty
\big|\pee(Y=k)-\pee(Z=k)\big|.
$$
(Strictly speaking, this is a distance between the laws of $Y$ and $Z$ and we should
write $d_{\mathrm{\scriptscriptstyle TV}}\big(\textup{law}(Y),\textup{law}(Z)\big)$.)
By $\textup{Poisson}(\lambda)$ we denote Poisson random variable with mean $\lambda>0$, 
namely
$$
\mathbb{P}(\textup{Poisson}(\lambda)=k)=\frac{\lambda^k}{k!} \ e^{-\lambda}\, .
$$

%%%%%%%%%%%%%%%%%%%%% THEOREME
\begin{theorem}\label{abstract}
Let $(X_n)_{n\in\N}$ be a stationary $\{0,1\}$-valued process and $\eps:=\pee(X_1=1)$.
Then, for all positive integers $p,M,N$ such that $M\leq N-1$ and $2\leq p<N$, one has
%\begin{equation}\label{main}
\[
d_{\mathrm{\scriptscriptstyle TV}}
\big(X_1+\cdots+X_N,\textup{Poisson}(N\epsilon)\big)
\leq R(\epsilon,N,p,M)
\]
%\end{equation}
with 
$$
R(\epsilon,N,p,M)=2NM\big[R_{1}(\epsilon,N,p)+R_{2}(\epsilon,p)\big] +R_3(\epsilon,N,p,M)
$$
where
$$
\left\{
\begin{array}{l}
 R_{1}(\epsilon,N,p):= \\
 \sup_{0\le j\le N-p,0\le q\le N-j-p}\left\{\Big|\E\big(\1_{\{X_{1}=1\}} \1_{\{S_{p+1}^{N-j}=q\}}\big)-\epsilon
\E\big( \1_{\{S_{p+1}^{N-j}=q\}}\big)\Big| \right\}\\ \\
R_{2}(\epsilon,p):=\sum_{\ell=1}^{p-1} \E\big(\1_{\{X_{1}=1\}} \1_{\{X_{\ell+1}=1\}}\big)\\ \\
R_3(\epsilon,N,p,M):=4\left(
Mp\epsilon(1+N\epsilon)+ \frac{(\epsilon N)^{M}}{M!}\ e^{- N \epsilon}
+ N\epsilon^{2}\right).
\end{array}\right.
$$
\end{theorem}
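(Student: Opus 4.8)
The plan is to obtain the bound by the Chen–Stein method for Poisson approximation, combined with a block decomposition of the sum to tame the dependence. The Chen–Stein bound controls $d_{\mathrm{\scriptscriptstyle TV}}$ by two terms measuring, respectively, local clustering and long-range dependence; the block decomposition lets us convert "long-range dependence" into the mixing-type quantity $R_1$ and the short-range clustering into $R_2$, while $R_3$ collects the error made in passing from the true sum to a truncated/blocked surrogate.

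\medskip

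\textbf{Step 1: The Chen–Stein skeleton.} First I would recall the standard Chen–Stein estimate: for $\{0,1\}$-valued variables $X_1,\dots,X_N$ with $\eps = \pee(X_i=1)$, and for any choice of ``neighbourhoods of dependence'' $B_i \subset \{1,\dots,N\}$ (with $i \in B_i$), one has
$$
d_{\mathrm{\scriptscriptstyle TV}}\big(X_1+\cdots+X_N,\textup{Poisson}(N\eps)\big)
\le b_1 + b_2 + b_3,
$$
where $b_1 = \sum_i \sum_{j\in B_i}\pee(X_i=1)\pee(X_j=1)$, $b_2 = \sum_i \sum_{j\in B_i, j\neq i}\E(\1_{\{X_i=1\}}\1_{\{X_j=1\}})$, and $b_3 = \sum_i \E\big|\E(\1_{\{X_i=1\}} - \eps \mid \sigma(X_j : j\notin B_i))\big|$. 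I would take $B_i = \{i-p+1,\dots,i+p-1\}$, a window of radius $p$ around $i$. By stationarity, $b_1 \le 2p N\eps^2$ (folded into $R_3$, up to the $4(\ldots)$ prefactor which absorbs several such crude terms), and $b_2 \le 2N R_2(\eps,p)$ after accounting for the symmetric contributions $\ell$ and $-\ell$ — so the factor $NM$ versus $N$ has to be reconciled by the next step, meaning the blocking is really what produces the $M$.

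\medskip

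\textbf{Step 2: Blocking to handle $b_3$.} The genuinely delicate term is $b_3$, the conditional-expectation term. Conditioning on "everything outside $B_i$" is unwieldy because that $\sigma$-algebra is huge; instead I would first replace the full sum $S_1^N = X_1+\cdots+X_N$ by a bounded surrogate, exploiting that on the Poisson side the mass beyond $M$ is $\frac{(N\eps)^M}{M!}e^{-N\eps}$ and on the empirical side $\pee(S_1^N \ge M) \le \frac{N\eps}{M}$ — wait, more precisely one truncates at level $M$ and pays exactly the two terms $\frac{(N\eps)^M}{M!}e^{-N\eps}$ and a Markov-type term (again absorbed in $R_3$). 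Then, conditionally on the surrogate taking a value at most $M$, the relevant event decomposes into at most $M$ "one block is occupied" events, each of which, after sliding the conditioning window out to distance $p$, is controlled by $R_1$: indeed $R_1$ is precisely $\sup_{j,q}|\E(\1_{\{X_1=1\}}\1_{\{S_{p+1}^{N-j}=q\}}) - \eps\,\E(\1_{\{S_{p+1}^{N-j}=q\}})|$, the decoupling of the event $\{X_1=1\}$ from a sum that starts $p$ steps later. Summing over the $\le M$ occupied blocks and the $\le N$ positions gives the $2NM$ prefactor on $R_1 + R_2$.

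\medskip

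\textbf{The main obstacle} I expect is bookkeeping in Step 2: making the reduction "from $b_3$ with its abstract tail $\sigma$-algebra, to the concrete quantity $R_1$" fully rigorous requires (i) a careful truncation argument showing the surrogate is close to $S_1^N$ in total variation with the stated $R_3$ cost, (ii) a telescoping/interpolation over the $M$ blocks so that each step only invokes the two-point correlation $R_2$ (for indices within a window) or the one-point/block decoupling $R_1$ (for indices far apart), and (iii) tracking that every "crude" error — $b_1$, the Markov tail bound, overlap corrections, a $\binom{M}{2}$-type term from two occupied blocks being close, and the $N\eps^2$ self-term — lands inside $4(Mp\eps(1+N\eps) + \frac{(N\eps)^M}{M!}e^{-N\eps} + N\eps^2)$. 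None of these is deep, but assembling the constants so the final inequality reads cleanly as $R = 2NM(R_1+R_2) + R_3$ is where the real work lies. I would organize the proof as a sequence of lemmas — one for the Chen–Stein bound, one for the truncation, one for the block telescoping — and then combine.
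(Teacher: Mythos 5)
Your plan hinges on the Chen--Stein bound with dependence neighbourhoods $B_i=\{i-p+1,\dots,i+p-1\}$, and all of the real content is pushed into Step 2, where you must convert $b_3=\sum_i\E\big|\E\big(\1_{\{X_i=1\}}-\eps\,\big|\,\sigma(X_j:j\notin B_i)\big)\big|$ into the quantity $R_1$. This is where the argument genuinely breaks. The term $b_3$ conditions on the full \emph{two-sided} sigma-algebra outside the window --- in particular on the past variables $X_1,\dots,X_{i-p}$ --- and it is an $L^1$ norm of a conditional expectation, i.e.\ a supremum of covariances against \emph{all} bounded functions measurable with respect to that sigma-algebra. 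The theorem's hypotheses only provide $R_1$, which measures the decoupling of $\{X_1=1\}$ from events of the very particular form $\{S_{p+1}^{N-j}=q\}$: level sets of a \emph{future} partial sum beyond a gap $p$. Nothing in the hypotheses controls decorrelation from the past, nor from arbitrary functionals of the future, so for a general stationary process with $R_1$ and $R_2$ small, $b_3$ need not be small; no truncation or blocking bookkeeping can repair this. This is precisely the obstruction the paper flags when it stresses that its error term ``involves only future sigma-algebras'', which is why the authors do \emph{not} use Chen--Stein for this step. (A secondary mismatch: your mechanism for the factor $M$ and the Markov-type tail $N\eps/M$ does not correspond to anything in $R_3$.)

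The paper's actual route is more elementary and avoids conditional expectations altogether: introduce an i.i.d.\ Bernoulli sequence $(\tilde X_n)$ with the same marginal, independent of $(X_n)$, and write $\pee(S_1^N=k)-\pee(\tilde S_1^N=k)$ as a telescoping sum in which one variable at a time is replaced. In each telescoping difference the remaining true variables enter \emph{only through an event of the form} $\{S_{m}^{T}=t\}$; shifting the start of that sum from $2$ to $p+1$ costs $R_2(\eps,p)+p\eps^2$, and the decoupled term is then bounded by $R_1$ by construction. The total variation sum over $k$ is split at $M$ (this is where the factor $M$ and the $\frac{(N\eps)^M}{M!}e^{-N\eps}$ term arise), and finally Le Cam's inequality compares $\tilde S_1^N$ with $\textup{Poisson}(N\eps)$, yielding the $N\eps^2$ contributions. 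If you wish to salvage a Stein-type proof you would need a one-sided variant of $b_3$ in which the outside variables appear only through a future block sum --- and establishing that essentially amounts to redoing the paper's telescoping argument.
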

%%%%%%%%%%%%%%%%%%%%%%%%%%%%%%%%%%%%%%%%%%%%

\bigskip

\noindent The error term in the above Poisson approximation looks like
the one obtained by the Chen-Stein method \cite{chenstein},
but it involves only future sigma-algebras. In view of applications
to dynamical systems, this is crucial since correlations
(which are related conditional expectations) are in
general controlled only with respect to future sigma-algebras. Here we
use a different method which compares the number of occurrences in a
finite time interval with the number of occurrences in the same interval
for a Bernoulli process $(\tilde{X}_n)$ such that $\pee(\tilde{X}_1=1)=\eps$.
It finally remains to estimate the distance between the number of
occurrences of this Bernoulli process with a Poisson law, but there exists a well
known sharp estimate \cite{lecam}. 

\bigskip

\begin{proof}
Let $(\tX_n)_{n\in\N}$ be a sequence of independent, identically distributed random variables
taking values in $\{0,1\}$. Let $\eps=\pee(\tX_1=1)$ and assume that the $\tX_n$'s
are independent of the $X_n$'s. \newline
We will use the following notations and conventions: $S_i^j:=X_i+X_{i+1}+\cdots+X_j$ and
$\tS_i^j:=\tX_i+\tX_{i+1}+\cdots+\tX_j$, for $1\leq i\leq j$;
$\tS_1^0=S_{N+1}^N:=0$.\newline

\noindent We start by writing a telescoping identity:

\begin{equation}\label{tele}
\pee\big(S_1^N=k\big)-\pee\big(\tS_1^N=k\big) =\sum_{j=0}^{N-1} \Delta_k(j)
\end{equation}
where 
\begin{eqnarray*}
\Delta_k(j) & := & \pee\big(\tS_1^j+S_{j+1}^{N}=k\big)-\pee\big(\tS_1^{j+1}+S_{j+2}^{N}=k\big) \\
                  & = & \sum_{\ell=0}^j \binom{j}{\ell}\ \eps^\ell (1-\eps)^{j-\ell}\ \Phi_{k,j}(\ell),
\end{eqnarray*}
where in turn we set 
$$
\Phi_{k,j}(\ell):=\pee\big(S_{1}^{N-j}=k-\ell\big)-\pee\big(\tX_1+S_{2}^{N-j}=k-\ell\big).
$$
By assumption we have
\begin{eqnarray*}
\lefteqn{\pee\big(\tX_1+S_{2}^{N-j}=k-\ell\big) = }\\
& & \hspace{2cm}(1-\eps)\ \pee\big(S_{2}^{N-j}=k-\ell\big)+\eps \ \pee\big(S_{2}^{N-j}=k-\ell-1\big).
\end{eqnarray*}
Writing
$$
\pee\big(S_{1}^{N-j}=k-\ell\big)=\pee\big(X_1+S_{2}^{N-j}=k-\ell\big) = \pee\big(S_{2}^{N-j}=k-\ell-X_1\big) 
$$
$$
=\E\big[\1_{\{X_1=1\}}\1_{\{S_{2}^{N-j}=k-\ell-1\}}\big] +
\E\big[(1-\1_{\{X_1=1\}})\1_{\{S_{2}^{N-j}=k-\ell\}}\big]                                              
$$
we obtain
\begin{eqnarray*}
\Phi_{k,j}(\ell) & = &
\E\big[\1_{\{X_1=1\}}\1_{\{S_{2}^{N-j}= k-\ell-1\}}\big] -\eps\E\big[\1_{\{S_{2}^{N-j}=k-\ell-1\}}\big]\\
 & & - \Big(\E\big[\1_{\{X_1=1\}}\1_{\{S_{2}^{N-j}=k-\ell\}}\big] -\eps\E\big[\1_{\{S_{2}^{N-j}=k-\ell\}}\big]\Big).
\end{eqnarray*}
We want an estimate for a term of the form 
\begin{equation}\label{unbout}
\E\big[\1_{\{X_1=1\}}\1_{\{S_{2}^{T}=t\}}\big] -\eps\E\big[\1_{\{S_{2}^{T}=t\}}\big],\;
0\leq t\leq T.
\end{equation}
We start by observing that 
\begin{eqnarray*}
\1_{\{S_{2}^{T}=t\}} & = & \1_{\{X_2=1\}}\1_{\{S_2^T=t\}}+\1_{\{X_2=0\}}\1_{\{S_2^T=t\}}
\\
& = & 
\1_{\{X_2=1\}}\1_{\{S_2^T=t\}}+\1_{\{X_2=0\}}\1_{\{S_3^T=t\}}
\\
& = &
\1_{\{X_2=1\}}\1_{\{S_2^T=t\}}+\big(1-\1_{\{X_2=1\}}\big)\1_{\{S_3^T=t\}},
\end{eqnarray*}
whence
$$
-\1_{\{X_2=1\}}
\leq
\1_{\{S_{2}^{T}=t\}}-\1_{\{S_3^T=t\}}
\leq
\1_{\{X_2=1\}}.
$$
More generally, we get for every $m\geq 1$
$$
-\1_{\{X_{m+1}=1\}}
\leq
\1_{\{S_{m+1}^{T}=t\}}-\1_{\{S_{m+2}^T=t\}}
\leq
\1_{\{X_{m+1}=1\}}.
$$
Summing these inequalities for $m=1,2,\ldots, p-1$ yields 
$$
\big|
\1_{\{S_{2}^{T}=t\}}-\1_{\{S_{p+1}^T=t\}}
\big|
\leq
\sum_{m=1}^{p-1} \1_{\{X_{m+1}=1\}}
$$
for every $p\geq 2$. Therefore we have the following bound for \eqref{unbout}:
$$
\big|\E\big[\1_{\{X_1=1\}}\1_{\{S_{2}^{T}=t\}}\big] -\eps\E\big[\1_{\{S_{2}^{T}=t\}}\big]\big|
\leq 
$$
$$
\big|
\E\big(\1_{\{X_1=1\}} \1_{\{S_{p+1}^T=t\}}\big) - \eps \E\big(\1_{\{S_{p+1}^T=t\}}\big)
\big|
+
\sum_{m=1}^{p-1} \E\left(\1_{\{X_1=1\}}\1_{\{X_{m+1}=1\}}\right)
+p\epsilon^{2}. 
$$
Collecting all the estimates we get for each $k$
%\begin{equation}\label{lasomme1}
$$
\sum_{j=0}^{N-p-1} \big|\Delta_k(j)\big|\le 2N \big[
R_{1}(\epsilon,N,p)+R_{2}(\epsilon,p)+2p\epsilon^{2}\big].
$$
%\end{equation}
For the last $p$ terms ($N-p\le j\le N-1$) in the sum \eqref{tele}, we
cannot use the above estimate. Instead we directly bound the terms to get
immediately
$$
\big|\Phi_{k,j}(\ell)\big|\le 4\epsilon
$$ 
whence
%\begin{equation}\label{lasomme2}
$$
\sum_{j=N-p}^{N-1} \big|\Delta_k(j)\big|\le 4p\epsilon\;.
$$
%\end{equation}
Therefore we obtain for each $k$
\begin{eqnarray}
\nonumber
\lefteqn{\big|\pee\big(S_1^N=k\big)-\pee\big(\tS_1^N=k\big) \big| \leq} \\ 
\label{deltaP}
&& \qquad 2N \big[R_{1}(\epsilon,N,p)+R_{2}(\epsilon,p)+2p\epsilon^{2}\big] +  4p\epsilon.
\end{eqnarray}
We now estimate the total variation norm between the law of
$S_1^N$ and that of $\tS_1^N$ which we write as
\begin{equation}\label{tvn}
\sum_{k=0}^{N-1}\big|\pee\big(S_1^N=k\big)-\pee\big(\tS_1^N=k\big)\big|=:I_{1}+I_{2}
\end{equation}
where 
$$
I_{2}=\sum_{k=M}^{N-1}\big|\pee\big(S_1^N=k\big)-\pee\big(\tS_1^N=k\big)\big|.
$$
We have at once
\begin{eqnarray*}
I_{2} & \le & \sum_{k=M}^{N-1}\pee\big(S_1^N=k\big)+ \sum_{k=M}^{N-1}\pee\big(\tS_1^N=k\big) \\
        &  = & 2\sum_{k=M}^{N-1}\pee\big(\tS_1^N=k\big)
                   +\sum_{k=M}^{N-1}\big[\pee\big(S_1^N=k\big)-\pee\big(\tS_1^N=k\big)\big]\\
        &  = & 2\sum_{k=M}^{N-1}\pee\big(\tS_1^N=k\big)+
\sum_{k=0}^{M-1}\big[\pee\big(\tS_1^N=k\big)-\pee\big(S_1^N=k\big)\big]\\
       & \le  & 2 \sum_{k=M}^{N-1}\pee\big(\tS_1^N=k\big)+ I_{1}.
\end{eqnarray*}
We now use the fact \cite{chenstein} that  for any $\lambda>0$ and any integer
$N\ge 1$, 
\begin{equation}\label{lecam}
\sum_{k=0}^\infty \Big|\pee(\tS_1^N=k) -
 \frac{e^{-\lambda}\lambda^k}{k!}\Big|\leq \frac{2\lambda^2}{N}\cdot
\end{equation}
%%%%
and observe that
$$
\sum_{k=M}^{N-1}\pee\big(\tS_1^N=k\big)=\pee\big(\tS_1^N\ge M\big).
$$
Therefore, using \eqref{lecam} with $\lambda=N\epsilon$ we get
$$
\sum_{k=M}^{N-1}\pee\big(\tS_1^N=k\big)\le
2N\epsilon^{2}+e^{-N\epsilon}\frac{(N\varepsilon)^{M}}{M!} \cdot
$$
Hence
\begin{equation}\label{hideux}
I_2\leq 4N\epsilon^{2}+ 2e^{-N\epsilon}\frac{(N\varepsilon)^{M}}{M!} + I_{1}.
\end{equation}
On the other hand, we have from \eqref{deltaP} the obvious bound
\begin{equation}\label{hiun}
I_{1}\le 2M N \big[R_{1}(\epsilon,N,p)+R_{2}(\epsilon,p)+2p\epsilon^{2}\big]
+4Mp\epsilon.
\end{equation}
Using the triangle inequality, \eqref{tvn} and (\ref{lecam}) with  $\lambda=N\epsilon$, we obtain
\begin{align*}
d_{\mathrm{\scriptscriptstyle TV}} &
\big(X_1+\cdots+X_N,\textup{Poisson}(N\epsilon)\big)  \\
& \leq \frac12 \sum_{k=0}^{N-1}\big|\pee\big(S_1^N=k\big)-\pee\big(\tS_1^N=k\big)\big| 
+ \frac12 \sum_{k=0}^\infty \Big|\pee(\tS_1^N=k) -
 \frac{e^{-\lambda}\lambda^k}{k!}\Big|  \\
& \leq\frac12 \Big(I_{1}+I_{2}+2N\epsilon^{2}\Big)\cdot
\end{align*}
Using \eqref{hideux} and \eqref{hiun} we conclude the proof of the theorem.

\end{proof}

\section{A class of non-uniformly hyperbolic systems}\label{NUDS}

We work in the setting described in \cite{young1,young2} to which we refer
for more details. We first recall (most of) the axioms and then list
some of their consequences we use later on.

\subsection{Axioms}\label{axioms}

Let $T:M\circlearrowleft$ be a $C^2$ diffeomorphism of a finite-dimensional Riemannian manifold $M$. \newline
An embedded disk $\gamma\subset M$ is called an unstable disk if  for any $x,y\in\gamma$,
the distance $d(T^{-n}x,T^{-n}y)$ tends to $0$ exponentially fast as $n\to\infty$; it is called a stable
disk if for any $x,y\in\gamma$, the distance $d(T^{n}x,T^{n}y)$ tends to $0$ exponentially fast as $n\to\infty$.\newline
We say that a set $\Lambda$ has a hyperbolic product structure if there exist a continuous
family of unstable disks $\legu=\{\gamma^u\}$ and a continuous family of stable disks
$\legs=\{\gamma^s\}$ such that
\begin{enumerate}
\item $\dimu+\dims=n$ where $\dimu=\textup{dim}(\gamma^u)$, $\dims=\textup{dim}(\gamma^s)$
and $n=\textup{dim}(M)$;
\item the $\gamma^u$-disks are transversal to the $\gamma^s$-disks with the angles between them
bounded away from zero;
\item each $\gamma^u$-disk meets each $\gamma^s$-disk at exactly one point;
\item $\Lambda=(\cup\gamma^u)\cap (\cup\gamma^s)$.
\end{enumerate}
A central ingredient is a certain return-time function
$R:\Lambda\to\N$.
In the sequel, we denote by $\leb$ the Riemannian measure on $M$ and by $\leb_{\gamma}$ the measure
on $\legu$ induced by the restriction of the Riemannian  structure of $M$ to $\gamma$.

\begin{itemize}
\item[(P1)] There exists $\Lambda\subset M$ with a hyperbolic product structure
and such that $\leb_\gamma(\gamma\cap\Lambda)>0$ for every $\gamma\in\legu$.
\item[(P2)] There are pairwise disjoint sets $\Lambda_1,\Lambda_2,\ldots \subset \Lambda$
with the following properties:
\begin{itemize}
\item[(a)] Each $\Lambda_i$ has a hyperbolic product structure and its defining families
can be chosen to be $\legu$ and $\legs_i\subset \legs$; we call $\Lambda_i$ an $s$-subset;
similarly, one defines $u$-subsets.
\item[(b)] On each $\gamma^u$-disk, 
$\leb_\gamma\big((\Lambda\backslash\cup_i\Lambda_i)\cap \gamma\big)=0$ for every $\gamma\in\legu$.
\item[(c)] There exists $R_i\geq 0$ such that $T^{R_i}(\Lambda_i)$ is a $u$-subset
of $\Lambda$; moreover, for all $x\in\Lambda_i$ we require that 
$T^{R_i}(\gamma^s(x))\subset \gamma^s(T^{R_i}x)$ and
$T^{R_i}(\gamma^u(x))\supset \gamma^u(T^{R_i}x)$.
\item[(d)] For each $n$, there are at most finitely many $i$'s with $R_i=n$.
\item[(e)] $\min_i R_i\geq R_0$ for some $R_0>0$ depending only on $T$.
\end{itemize}
\end{itemize}
To state the remaining conditions we need to assume that there is a function
$s_0(x,y)$ (``separation time" of $x$ and $y$) which satisfies the following conditions
\begin{enumerate}
\item $s_0(x,y)\geq 0$ and it depends only on the $\gamma^s$-disks containing the two points;
\item the maximum number of orbits starting from $\Lambda$ that are pairwise separated before
time $n$ is finite for each $n$;
\item for $x,y\in\Lambda_i$, $s_0(x,y)\geq R_i + s_0\big(T^{R_i}(x),T^{R_i}(y)\big)$; in particular,
$s_0(x,y)\geq R_i$;
\item for $x\in\Lambda_i$, $y\in\Lambda_j$, $i\neq j$, we have $s_0(x,y)<R_i -1$.
\end{enumerate}
Let $T^u$ be the restriction of $T$ to $\gamma^u$. We assume that there exist $C>0$ and
$\alpha<1$ such that for all $x,y\in\Lambda$, the following conditions hold:
\begin{itemize}
\item[(P3)] Contraction along $\gamma^s$-disks: $d(T^n(x),T^n(y))\leq C \alpha^n$ for all $n\geq 0$
and $y\in\gamma^s(x)$.
\item[(P4)] Backward contraction and distortion along $\gamma^u$: for $y\in \gamma^u(x)$ and
$0\leq k\leq n<s_0(x,y)$, we have
\begin{itemize}
\item[(a)] $d(T^n(x),T^n(y))\leq  C \alpha^{s_0(x,y)-n}$;
\item[(b)]
$$
\log \prod_{i=k}^{n} \frac{\textup{det}\ DT^u(T^i(x))}{\textup{det}\ DT^u(T^i(y))}\leq C \alpha^{s_0(x,y)-n}.
$$
\end{itemize}
\item[(P5)] Convergence of $D(T^i \big|\gamma^u)$ and absolute continuity of $\legs$:
\begin{itemize}
\item[(a)] for $y\in\gamma^s(x)$ and $n\geq 0$,
$$
\log \prod_{i=n}^{\infty} \frac{\textup{det}\ DT^u(T^i(x))}{\textup{det}\ DT^u(T^i(y))}\leq C \alpha^{n};
$$
\item[(b)] for $\gamma,\gamma'\in \legu$, define $\Theta:\gamma\cap\Lambda \to \gamma' \cap\Lambda$
by $\Theta(x)=\gamma^s(x)\cap\gamma'$; then $\Theta$ is absolutely continuous and
$$
\frac{\d\big(\Theta^{-1}_* \leb_{\gamma'}\big)}{\d \leb_\gamma}(x)=
\prod_{i=0}^{\infty} \frac{\textup{det}\ DT^u(T^i(x))}{\textup{det}\ DT^u(T^i(\Theta(x)))}\cdot
$$
\end{itemize}
\end{itemize}
   
\subsection{Some properties}
   
As proved in \cite{young1}, if for some unstable manifold $\gamma\in\legu$, one has
\begin{equation}\label{srb1}
\sum_{p=1}^\infty \leb_\gamma\big\{x\in \gamma\cap \Lambda: R(x)>p\big\}
<\infty, 
\end{equation}
then $(M,T)$ admits an SRB measure which we denote by $\mu$.\newline
Define the set
$$
\attrac:=\bigcup_{i=1}^\infty \bigcup_{j=0}^{R_i -1} T^j(\Lambda_i).
$$  
This is the attractor of the system and its supports the SRB measure $\mu$.\newline
We recall that  for any measurable set $S$ we have the formula
\begin{equation}\label{mutour}
\mu(S)=\sum_{i=1}^{\infty}\sum_{j=0}^{R_{i}-1}m\big(T^{-j}\big(S\big)\cap
 \Lambda_{i}\big)
\end{equation}
where $m$ is the SRB measure for $(\Lambda,T^{R})$. We refer to \cite{young1} for
details.  The measure $m$ can be disintegrated using the foliation in local unstable manifolds. 
For any integrable function $g$ we have
\begin{equation}\label{desintm}
\int_{\Lambda} g\,\d m=
\int_{\legu} \d\nu(\gamma)\int_{\gamma}  g\, \d m_\gamma,
\end{equation}
where $\nu$ is the so-called transverse measure. Each measure
$m_\gamma$ has a density with respect to $\leb_\gamma$:
\begin{equation}\label{couic}
\d m_\gamma = \rho_\gamma\ \d\leb_\gamma,
\end{equation}
where
\begin{equation}\label{densite}
B^{-1}\le \rho_{\gamma}(x)\le B
\end{equation}
for some positive constant $B>1$ independent of 
$\gamma\in\legu$.\newline
Note that the measure $\leb_{\gamma}$ is not normalised. However, since
$m$ is a probability measure, we have
\begin{equation}\label{munormalise}
\int_{\legu} \d\nu(\gamma) \int_{\gamma}\rho_{\gamma}
\,\d\leb_{\gamma}(\gamma)=1\;.
\end{equation}
Given $\beta\in\ ]0,1]$, let $\mathcal{H}_\beta(M)$ be the Banach space of real-valued H\"older continuous
functions on $M$ ($\beta=1$ gives the Lipschitz functions). We denote by $\|\cdot\|_{\beta}$ the H\"older norm.
Using \cite{young1,young2} and Theorem B.1 in \cite{ccs},
we have the following decay of correlations for H\"older functions with respect to
the SRB measure $\mu$: there is a sequence $C(p)=C(p,\beta)$ of positive real numbers
tending to zero as $p\to\infty$, such that for any functions $\psi_{1},\psi_{2}\in \mathcal{H}_\beta(M)$,
we have
\begin{equation}\label{decor}
\left|\int \psi_{1}\cdot \psi_{2}\circ T^{p}\,\d\mu-\int \psi_{1}\,\d\mu
\int \psi_{2}\,\d\mu\ \right| \le C(p)\ \|\psi_{1}\|_{\beta}\ \|\psi_{2}\|_{\beta}.
\end{equation}
It was proved in \cite{young2} that
%\begin{equation}\label{decoryoung}
$$
C(p) = \Oun \ \sum_{k>p}  m\{R>k\}.
$$
%\end{equation}
Notice that \eqref{srb1} implies that
$$
\sum_{p=0}^\infty m\{R>p\}<\infty.
$$
The following positive function of $s\in\R$ will appear repeatedly:
$$
\Omega(s):= \sqrt{\sum_{i : R_i\geq s} R_i \ m(\Lambda_i)}.
$$
Notice that $\Omega(s)\to 0$ as $s\to+\infty$ and that
$$
C(p)=\Oun\ \Omega(p)^2.
$$
We will also use repeatedly the positive number  
\begin{equation}\label{leA}
A=\|DT\|_{L^{\infty}}+\|DT^{-1}\|_{L^{\infty}}+\|D^{2}T\|_{L^{\infty}}\;.
\end{equation}
Note that $A\ge 2$. 

\subsection{Poisson approximation}

We can now formulate precisely our main theorem which is loosely stated
in the introductory section:

\begin{theorem}\label{nuhpoisson}
Let $(M,T,\mu)$ be a dynamical system obeying the axioms of
Subsection \ref{axioms} where $\mu$ is the SRB measure.
Moreover assume that the return-time function $R$ has a exponential tail
(with respect to the measure $m$)
and that the local unstable manifolds are of dimension one.\newline
There exist positive constants $C,a,b$ such for any $r\in(0,1)$:
\begin{itemize}
\item There exists a set $\trestresmauvais_r$ such that 
$$
\mu(\trestresmauvais_r)\leq C r^b\, ;
$$
\item For all $x\notin \trestresmauvais_r$ and all $t>0$ one has
$$
d_{\mathrm{\scriptscriptstyle TV}}\big(Z_{r,x}(t), \textup{Poisson}(t)\big)\leq C \ r^a
$$
where 
$$
Z_{r,x}(t)=\sum_{j=0}^{\lfloor t/\mu(B_r(x))\rfloor} \1_{\scriptscriptstyle{B_r(x)}}\circ T^j 
$$
and $\textup{Poisson}(t)$ is a Poisson random variable of mean $t$.
\end{itemize}
\end{theorem}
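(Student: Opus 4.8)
The plan is to apply Theorem~\ref{abstract} to the stationary $\{0,1\}$-valued process $X_{n}:=\1_{B_{r}(x)}\circ T^{n-1}$ on $(M,\mu)$, for which $\E(X_{1})=\mu(B_{r}(x))=:\epsilon$. With $N:=\lfloor t/\epsilon\rfloor+1$ one has $Z_{r,x}(t)=X_{1}+\cdots+X_{N}$ and $N\epsilon\in[t,t+\epsilon)$, hence $d_{\mathrm{\scriptscriptstyle TV}}(Z_{r,x}(t),\textup{Poisson}(t))\le R(\epsilon,N,p,M)+\epsilon$ for every admissible pair $(p,M)$, and it remains to estimate the three pieces of $R(\epsilon,N,p,M)$ for $x$ outside a suitable bad set. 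I would take a mixing gap $p=p(r)$ of order $\log(1/r)$, a truncation level $M=M(r)=\lceil r^{-\gamma}\rceil$ with $\gamma\in(0,1)$ small, and a corona half-width $\delta=r^{\sigma}$ with $\sigma$ slightly larger than $1$; throughout I write $\C_{r,\delta}(x):=B_{r+\delta}(x)\setminus B_{r-\delta}(x)$ for the relevant spherical corona. Note first that \eqref{mutour}--\eqref{munormalise} together with the finiteness of $\int R\,\d m$ (which holds under the exponential-tail assumption) give a uniform upper bound $\mu(B_{s}(z))\le Cs$ for all $z$ and $s$.

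I would assemble the bad set $\trestresmauvais_{r}$ from three families of centers, each of $\mu$-measure $\le Cr^{b}$. \emph{Density-bad centers:} by Besicovitch's covering lemma and volume comparison on $M$, the set $\{x:\mu(B_{r}(x))<r^{D}\}$ has measure $\le Cr^{D-\dim M}$, which is a positive power of $r$ as soon as $D>\dim M$; so $r^{D}\le\epsilon\le Cr$ for good $x$. \emph{Fast-return centers:} if $B_{r}(x)\cap T^{-\ell}B_{r}(x)\neq\emptyset$ for some $1\le\ell<p$, then some $y\in B_{r}(x)$ has $d(y,T^{\ell}y)<2r$, so by the shadowing property $x$ lies within $Cr$ of a periodic point of period at most $p$; these periodic points number at most exponentially many in $p$ and each surrounding ball of radius $Cr$ has $\mu$-mass $\le Cr$, so their union has measure $\le Cr^{b}$ as long as $p$ is not chosen too large. \emph{Fat-corona centers:} those $x$ with $\mu(\C_{r,\delta}(x))>(\delta/r)^{\varsigma}\mu(B_{r}(x))$; that this set has measure $\le Cr^{b}$, for an appropriate $\varsigma>0$, is precisely the spherical-corona estimate — the new ingredient of the paper, which rests on the consequences of Besicovitch's covering lemma collected in the appendix and genuinely uses the one-dimensionality of the unstable manifolds.

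For $x\notin\trestresmauvais_{r}$ I would estimate the error terms as follows. Approximate $\1_{B_{r}(x)}$ from below and from above by Lipschitz functions $\varphi_{r}^{\pm}$ with $\varphi_{r}^{-}\le\1_{B_{r}(x)}\le\varphi_{r}^{+}$, $\|\varphi_{r}^{\pm}\|_{\beta}\le 2/\delta$ and $\{\varphi_{r}^{+}\neq\varphi_{r}^{-}\}\subset\C_{r,\delta}(x)$. Every indicator occurring in $R_{1}$ and $R_{2}$ is of the form $g\circ T^{m}$ with $g$ a $\{0,1\}$-valued function of the coordinates at nonnegative times; replacing $\1_{B_{r}(x)}$ by $\varphi_{r}^{\pm}$ costs at most $\mu(\C_{r,\delta}(x))$ (since $0\le g\le1$), and then $T$-invariance of $\mu$ and the decay of correlations \eqref{decor} — which holds, with the same rate up to a multiplicative constant, when the second observable is merely bounded measurable, as follows from the tower construction of \cite{young1,young2} — give
\[
R_{1}(\epsilon,N,p)\ \le\ \mu(\C_{r,\delta}(x))+\frac{2\,C(p)}{\delta}\,,\qquad R_{2}(\epsilon,p)=0\,,
\]
the latter because each term of $R_{2}$ equals $\mu(B_{r}(x)\cap T^{-\ell}B_{r}(x))$ for $1\le\ell<p$ and these vanish off $\trestresmauvais_{r}$. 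Since $C(p)=\Oun\,\Omega(p)^{2}$ and the exponential tail give $C(p)\le C\theta^{p}$ for some $\theta\in(0,1)$, feeding this, the two-sided bound $r^{D}\le\epsilon\le Cr$, $N\epsilon\le t+1$, the corona bound and $\delta=r^{\sigma}$ into $2NM\,R_{1}$, a careful balancing of exponents yields $2NM\,R_{1}\le C(1+t)r^{a}$. For $R_{3}$ one bounds directly: $N\epsilon^{2}\le\epsilon(1+t)$, $Mp\,\epsilon(1+N\epsilon)\le C(2+t)\,r^{1-\gamma}\log(1/r)$, and $(\epsilon N)^{M}e^{-\epsilon N}/M!\le M^{M}e^{-M}/M!\le(2\pi M)^{-1/2}\le r^{\gamma/2}$, so $R_{3}\le C(1+t)r^{a}$. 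Adding the contributions together with the mean-correction $\epsilon\le Cr$ yields $d_{\mathrm{\scriptscriptstyle TV}}(Z_{r,x}(t),\textup{Poisson}(t))\le Cr^{a}$, which is the claim.

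The main obstacle is the spherical-corona estimate $\mu(\C_{r,\delta}(x))\lesssim(\delta/r)^{\varsigma}\mu(B_{r}(x))$ for $\mu$-a.e. $x$: this is the genuinely new analytic input, it is where the one-dimensionality of the unstable leaves really enters, and proving it requires developing the consequences of Besicovitch's covering lemma that fill the appendix. A secondary, bookkeeping-type difficulty is to choose the scales $p,M,\delta$ so that all constraints hold simultaneously — $\delta$ must exceed $r$ for the corona estimate to bite, but $\delta/r$ must be a small power of $r$ so that $(\delta/r)^{\varsigma}$ beats the factor $1/\epsilon$ hidden in $2NM$; $p$ must be large enough that $C(p)\ll\epsilon\,\delta$, yet small enough that the fast-return centers still form a set of measure $\le Cr^{b}$; and $M$ must be a negative power of $r$ to suppress the Poisson tail $M^{M}e^{-M}/M!$ while keeping $Mp\epsilon\to0$. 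Reconciling these requirements, and reading off the resulting (small) exponents $a$ and $b$, is where the remaining care lies.
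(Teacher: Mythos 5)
Your overall skeleton (the abstract Theorem \ref{abstract} with $X_n=\1_{B_r(x)}\circ T^{n-1}$, Besicovitch-based bad sets, a corona estimate, and scales $p,M$ tied to $\log(1/r)$) matches the paper, but two central steps would fail as proposed. First, the claim that $R_2=0$ off a small bad set rests on excluding \emph{all} returns $1\le\ell<p$ with $p$ of order $\log(1/r)$. The set of centers with $B_r(x)\cap T^{-\ell}B_r(x)\neq\emptyset$ can only be shown small for $\ell\le\mathfrak{c}\log(1/r)$ with $\mathfrak{c}$ a \emph{small} constant fixed by the expansion ($\mathfrak{c}=1/(6\log A)$ in Lemma \ref{courtnonu}); for $\ell$ a large multiple of $\log(1/r)$ the image $T^{\ell}(B_r(x))$ is macroscopic and the "fast-return" set is no longer of small measure. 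On the other hand, your $R_1$ bound $2NM\,C(p)/\delta\le Cr^a$ forces $p\ge\kappa\log(1/r)$ with $\kappa$ determined by $n$, the correlation-decay rate and the corona width — and there is no reason $\kappa\le\mathfrak{c}$, so the two constraints on $p$ are generally incompatible. This is exactly why the paper does \emph{not} kill the intermediate returns but estimates $\E\big(\1_{B_r(x)}\,\1_{B_r(x)}\circ T^{\ell}\big)$ for $\mathfrak{c}\log(1/r)\le\ell$ directly (Proposition \ref{nimportequoi}, using the tower partition $\mi_{\gamma,i,j,\ell,r}$, bounded distortion and $\dimu=1$, plus smoothed decay of correlations for $\ell>(\log(1/r))^2$), and then in Proposition \ref{decornuh} enlarges the gap from $p$ to $p_*$ by inserting $\prod_{j=p}^{p_*}\1_{\{X_j=0\}}$, paying precisely this $R_2$-type price. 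Your auxiliary ingredients for the fast-return set are also not available here: no shadowing/specification property is among the axioms, and the uniform bound $\mu(B_s(z))\le Cs$ does not follow from \eqref{mutour}--\eqref{munormalise} (the paper only obtains $\mu(B_r(x))\le Cr^{\mathfrak{r}}$ off a bad set, Lemma \ref{mesureboule}).

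Second, your estimate of $R_1$ invokes decay of correlations \eqref{decor} "with the second observable merely bounded measurable," which is false in general for invertible (non-expanding) systems: Young's argument tolerates an $L^\infty$ second factor only if that factor is \emph{constant on local stable manifolds}, and the event $\{S_{p+1}^{N-j}=q\}$ is not — two points on the same stable leaf can disagree about hitting $B_r(x)$ when their orbits pass near $\partial B_r(x)$. The paper repairs this by replacing the event with one constant on stable leaves and controlling the discrepancy set $\mathscr{Y}_{p',\ell}(x,r)$ via Lemma \ref{lev}, which confines it to preimages of coronas of widths $\alpha^{k}$, $k=p',\dots,p'+\ell$; bounding these requires the corona estimate of Proposition \ref{courons} simultaneously for all widths $r^{\delta}$, $\delta>1$, off a single bad set — not just the one width $\delta=r^{\sigma}$ you allow for. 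So even granting your corona lemma, the $R_1$ bound as written is unproved, and fixing it leads you back to the multi-scale corona control and the stable-leaf approximation that occupy Propositions \ref{courons} and \ref{decornuh}.
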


%%%%%%%%%%%%%%% NEW SECTION%%%%%
\section{Proof of Theorem \ref{nuhpoisson}}\label{preuves}

We will apply Theorem \ref{abstract} to the class of non-uniformly hyperbolic dynamical systems described in
Section \ref{NUDS}.
We will take $X_n=\1_{B_r(x)}\circ T^{n-1}$, $n\geq 1$, where $B_r(x)$ denotes the ball of center $x$
and radius $r$, whence $\eps=\mu(B_r(x))$.
We will control the error terms $R_1(\epsilon,N,p)$ and $R_2(\epsilon,p)$ in Theorem \ref{abstract}.
From now on, we work under the assumptions of Theorem  \ref{nuhpoisson}.

%%%%%%%%%%%%%%%%
%%%%%%%% SUBSECTION

\subsection{Estimation of $R_2(\epsilon,p)$}

We first estimate the measure of certain points $x$ coming back ``too quickly'' into
the ball $B_r(x)$.

\begin{lemma}\label{courtnonu}
Let
$$
\mauvais_{r}=\Big\{x\in \attrac\,\big|\, \exists\
1\le k\le \lfloor \mathfrak{c}\log (r^{-1})\rfloor,B_{r}(x)\cap T^{k}\big(B_{r}(x)\big)\neq\emptyset\Big\},
$$
where $\mathfrak{c}:=1/(6\log A)$. Then there exists a constant $D>0$ and, for any
$\mathfrak{a}\in\big(0,\frac{2}{3\log A}\big)$, there exists $\mathfrak{b}=\mathfrak{b}(\mathfrak{a})>0$
such for any $r\in (0,1)$
$$
\mu\big(\mauvais_{r}\big)\le 
D\ \log(r^{-1})\,\left[r^{\frac{\dimu \mathfrak{b}}{2}}+
\Omega^2\big(\mathfrak{a}\log (r^{-\frac12})\big)\right].
$$
\end{lemma}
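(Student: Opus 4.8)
The plan is to reduce membership in $\mauvais_r$ to the existence of an approximately $k$-periodic point, to dispose of the deep part of the Young tower by the crude return-time tail bound, and to estimate the remaining ``shallow'' part leaf by leaf, using $\dimu=1$. \emph{Step 1 (reduction to almost periodic points).} Let $x\in\mauvais_r$, say $B_r(x)\cap T^k\big(B_r(x)\big)\neq\emptyset$ with $1\le k\le\lfloor\mathfrak c\log(r^{-1})\rfloor$. Picking $y$ in this intersection and setting $z:=T^{-k}y$ we have $d(x,z)<r$, $d(x,y)<r$ and $T^kz=y$, whence $d(x,T^kx)\le d(x,y)+d(T^kz,T^kx)<r+\|DT\|_{L^\infty}^{k}\,r\le r+A^kr$. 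Since $k\le\log(r^{-1})/(6\log A)$ we get $A^k\le r^{-1/6}$, so $d(x,T^kx)<2r^{5/6}=:\rayon$ (using $r<1$). Hence
$$
\mauvais_r\ \subseteq\ \bigcup_{k=1}^{\lfloor\mathfrak c\log(r^{-1})\rfloor}\big\{v\in\attrac:\ d(v,T^kv)<\rayon\big\},
$$
and it suffices to bound $\mu\{v:\ d(v,T^kv)<\rayon\}$ by $D'\big[r^{\dimu\mathfrak b/2}+\Omega^2\big(\mathfrak a\log(r^{-1/2})\big)\big]$ uniformly in such $k$, then add the $\lfloor\mathfrak c\log(r^{-1})\rfloor$ contributions.

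\emph{Step 2 (splitting the tower).} Fix $k$ and set $\tau:=\mathfrak a\log(r^{-1/2})$. As $T^{-j}\{v:d(v,T^kv)<\rayon\}\cap\Lambda_i=\{w\in\Lambda_i:\ d(T^jw,T^{j+k}w)<\rayon\}$, formula \eqref{mutour} gives
$$
\mu\{v:\ d(v,T^kv)<\rayon\}=\sum_{i}\sum_{j=0}^{R_i-1}m\big(\{w\in\Lambda_i:\ d(T^jw,T^{j+k}w)<\rayon\}\big).
$$
Split the outer sum into $R_i>\tau$ and $R_i\le\tau$. The first part is $\le\sum_{i:R_i>\tau}R_i\,m(\Lambda_i)\le\Omega^2(\tau)=\Omega^2\big(\mathfrak a\log(r^{-1/2})\big)$, which is the second term of the target bound. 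In the remaining ``shallow'' part every orbit segment has length $j+k\le\tau+\mathfrak c\log(r^{-1})$, and the hypothesis $\mathfrak a<\tfrac{2}{3\log A}$ is exactly what makes the exponent $\tfrac23-\mathfrak a\log A$ occurring below strictly positive; equivalently it gives $A^{\,2\tau+k}\le r^{-\mathfrak a\log A-1/6}$.

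\emph{Step 3 (the shallow part).} Fix $i$ with $R_i\le\tau$ and $0\le j<R_i$; disintegrating $m$ over the unstable foliation via \eqref{desintm}--\eqref{densite} and using $\rho_\gamma\le B$, it suffices to bound $\leb_\gamma\{w\in\gamma:\ d(T^jw,T^{j+k}w)<\rayon\}$ for $\gamma\in\legu$. Subdivide $\gamma$ into $O(A^{j+k})=O(A^{\tau+k})$ subarcs on each of which $T^{j+k}$ has bounded distortion (by (P4)--(P5)). On such a subarc, parametrising $\Gamma:=T^j\gamma$ by arc length $u$ and using that $T$ is uniformly expanding along $\gamma^u$ — say by a factor $\ge\lambda>1$ per step, in a suitably adapted metric — the function $u\mapsto d(u,T^ku)$ has a single sublevel interval $\{<\rayon\}$, of $\leb_\gamma$-length $O\big(\rayon/(\lambda^k-1)\big)=O(\rayon)$; here one-dimensionality of $\gamma^u$ is essential, since the nearby unstable leaves $\Gamma$ and $T^k\Gamma$ are almost parallel, so $d(u,T^ku)$ is, up to an $O(\rayon)$ transverse error, a strictly monotone function of $u$. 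Hence $\leb_\gamma\{w\in\gamma:\ d(T^jw,T^{j+k}w)<\rayon\}=O(A^{\tau+k}\rayon)$. Integrating against the (finite) transverse measure $\nu$, then summing over $0\le j<R_i$ and over the indices with $R_i\le\tau$ — for which $\sum_{i:R_i\le\tau}R_i=O(\tau A^{\tau})$ by (P2)(d) and the entropy bound $h_{\mathrm{top}}(T|_{\attrac})\le\dimu\log A$ — the shallow part for this $k$ is
$$
O\big(\tau\,A^{\tau}\,A^{\tau+k}\,\rayon\big)\ \lesssim\ (\log r^{-1})\,r^{-\mathfrak a\log A-1/6}\cdot r^{5/6}\ =\ (\log r^{-1})\,r^{\,2/3-\mathfrak a\log A}.
$$
Since $\mathfrak a<\tfrac{2}{3\log A}$, this is $\le C\,r^{\dimu\mathfrak b/2}$ for any $\mathfrak b=\mathfrak b(\mathfrak a)\in\big(0,\tfrac{2}{\dimu}\big(\tfrac23-\mathfrak a\log A\big)\big)$, the logarithm being absorbed. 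Combining Steps 1--3 and summing over the $\lfloor\mathfrak c\log(r^{-1})\rfloor$ values of $k$ yields the asserted bound on $\mu(\mauvais_r)$.

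\emph{Main obstacle.} The crux is Step 3: rigorously controlling, within Young's non-uniformly hyperbolic framework, the expansion of $T^{j+k}$ relative to $T^{j}$ along $\gamma^u$ over orbit segments that may straddle a return to $\Lambda$ — this is where the distortion axioms (P4)--(P5) and the one-dimensionality of the unstable direction enter — together with the combinatorial estimate for the number of pieces $\Lambda_i$ with short return time. It is the competition between that count ($\sim A^{2\tau+k}$) and the gain $\rayon\asymp r^{5/6}$ forced by $k\le\mathfrak c\log(r^{-1})$ that simultaneously fixes the admissible range $\mathfrak a<\tfrac{2}{3\log A}$ and the exponent $\mathfrak b(\mathfrak a)$.
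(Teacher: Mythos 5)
Your Steps 1 and 2 (reduction to $d(x,T^kx)\le C r^{5/6}$ and splitting off the deep part of the tower through $\Omega^2\big(\mathfrak a\log(r^{-1/2})\big)$) are in the spirit of the paper. The genuine gap is in Step 3, which is the heart of the lemma. You bound the leafwise measure of $\{w\in\gamma:\ d(T^jw,T^{j+k}w)<\rayon\}$ by asserting that $T$ is ``uniformly expanding along $\gamma^u$ by a factor $\ge\lambda>1$ per step, in a suitably adapted metric'', so that on each bounded-distortion subarc the sublevel set is a single interval of length $O\big(\rayon/(\lambda^k-1)\big)$. No such per-iterate expansion is available in the setting of Section \ref{NUDS}: the axioms (P1)--(P5) give only backward contraction measured by the separation time ((P4)(a)) and distortion ((P4)(b), (P5)); for the systems targeted (H\'enon, billiards) the derivative along unstable leaves can contract for long stretches of orbit, and no adapted metric restores uniform one-step expansion --- that is exactly what ``non-uniformly hyperbolic'' means. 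Consequently neither the near-monotonicity of $u\mapsto d(u,T^ku)$ nor the $O(\rayon)$ bound on the sublevel set is justified, and the difficulty is most acute precisely for small $k$ (bounded $k$, or $k\ll\log(r^{-1})$), where your estimate gives nothing. In addition, the count $\sum_{i:R_i\le\tau}R_i=O(\tau A^{\tau})$ does not follow from (P2)(d), which is purely qualitative (it could be patched via Lemma \ref{lespieces}), and pulling the sublevel intervals back from $T^j\gamma$ to $\gamma$ costs a further factor $A^{j}$ that your accounting omits, which would already shrink the admissible range of $\mathfrak a$.

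For comparison, the paper's proof avoids expansion altogether and does not use $\dimu=1$. For $k\ge\mathfrak a_0\log(r^{-1})$ it fixes $\gamma,i,j$ and compares two nearly periodic points whose $T^{k-j}$-images lie in the same $\gamma\cap\Lambda_i$ (with $\hat x=T^{R_i-j}x$, $\hat z=T^{R_i-j}z$): either $d(T^k\hat x,T^k\hat z)\le 2d(\hat x,\hat z)$, in which case backward contraction (P4)(a) forces $d(T^k\hat x,T^k\hat z)\le C\alpha^{k}\le C r^{-\mathfrak a_0\log\alpha}$; or the reverse inequality holds, and approximate periodicity together with $R_i\le\mathfrak a\log(r^{-1})$ and $k\le\mathfrak c\log(r^{-1})$ gives $d(T^k\hat x,T^k\hat z)\le 4r^{2/3-\mathfrak a\log A}$. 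Hence after the Markov return $T^{R_i}$ the bad set lands in a ball of radius $Cr^{\mathfrak b}$ on a full unstable leaf, and bounded distortion transfers this into a bound on the \emph{relative} measure of the bad set inside $\gamma\cap\Lambda_i$ --- no counting of pieces is needed, only $\sum_i R_i\,m(\Lambda_i)$. The range $k<\mathfrak a_0\log(r^{-1})$, where the backward-contraction term is useless, is then handled separately by Lemma \ref{itervide}, replacing $k$ by $k2^{p(k)}$ and $r$ by $r'(k)\le r^{1/2}$ with $\mathfrak a_0=1/(16\log A)$; this is the origin of the exponents $r^{\dimu\mathfrak b/2}$ and $\Omega^2\big(\mathfrak a\log(r^{-1/2})\big)$ in the statement. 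As it stands, your Step 3 needs a different mechanism of this kind; the proposal does not go through.
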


\noindent Notice that this lemma holds for any $\dimu\geq 1$.

\medskip

\begin{proof}
Let $\mathfrak{a}_{0}>0$ such that $\mathfrak{a}_{0}<\mathfrak{c}$ to be chosen later on. 
We define the following sets:
\begin{eqnarray*}
\mauvais_{r}^{(1)} & := & 
\bigcup_{k=\lceil\mathfrak{a}_{0}\log (r^{-1})\rceil}^{\lfloor \mathfrak{c}\log (r^{-1})\rfloor}
\mathcal{N}_{r}(k),
\\ \\
\mauvais_{r}^{(2)} & := & 
\bigcup_{k=1}^{\lfloor\mathfrak{a}_{0}\log (r^{-1})\rfloor}
\mathcal{N}_{r}(k),
\end{eqnarray*}
%where, for $k$ smaller than $\lfloor \mathfrak{c}\log (r^{-1})\rfloor$, we set
where
$$
\mathcal{N}_{r}(k):=
\Big\{x\in \attrac\big| B_{r}(x)\cap T^{k}\big(B_{r}(x)\big)
\neq\emptyset\Big\}.
$$
By definition we have
$$
\mauvais_{r}= \mauvais_{r}^{(1)} \cup 
 \mauvais_{r}^{(2)}.
$$
We now derive a uniform estimate of
$\mu\big(\mathcal{N}_{r}(k)\big)$ for $k\ge \lfloor\mathfrak{a}_{0}\log (r^{-1})\rfloor$.\newline
Assume there exist $\gamma\in \legu$ and integers $i,j$ such that $T^{k-j}(x)\in \gamma\cap \Lambda_{i}$.
Let $z\in \mathcal{N}_{r}(k)$ be such that $T^{k-j}(z)\in \gamma\cap
\Lambda_{i}$. 
Note that by the Markov property, 
$T^{k+R_{i}-j}(z)\in \gamma\big(T^{k+R_{i}-j}(x)\big)$.
We will use the notations
$\hat x=T^{R_{i}-j}(x)$, and $\hat z=T^{R_{i}-j}(z)$.\newline
We distinguish two cases. 
Assume first 
$$
d\big(T^{k}(\hat x),T^{k}(\hat z)\big)\le 2 d(\hat x,\hat z)\;.
$$
From (P4)(a) in Section \ref{NUDS} and since $k\ge \lfloor\mathfrak{a}_{0}\log
r^{-1}\rfloor$, 
we have (since $T^{k}(\hat z)\in
\gamma^u(T^{k}(\hat x)$) 
$$
d(\hat x,\hat z)\le C\alpha^{k}\le C\alpha^{\mathfrak{a}_{0}\log (r^{-1})}.
$$
Hence
$$
d\big(T^{k}(\hat x),T^{k}(\hat z)\big)
\le 2 d(\hat x,\hat z)
\le 2 C\alpha^{\mathfrak{a}_{0}\log (r^{-1})}\;.
$$
We now consider the case 
$$
d\big(T^{k}(\hat x),T^{k}(\hat z)\big)\geq 2 d(\hat x,\hat z)\;.
$$
We observe that  $B_{r}(x)\cap T^{k}\big(B_{r}(x)\big)\neq\emptyset$
implies that there exists $y\in B_{r}(x)$ such that $T^{k}(y)\in
B_{r}(x)$. Therefore
$$
d\big(x,T^{k}(x)\big)\le d\big(x,T^{k}(y)\big)+
d\big(T^{k}(x),T^{k}(y)\big)\le r+A^{k}d(x,y)\le \big(A^{k}+1\big)r\;.
$$
Let $\mathfrak{a}\in \big( 0,\frac{2}{3\log A}\big)$ and assume that  $R_{i}\le \mathfrak{a}\log (r^{-1})$.
Then
$$
2 d(\hat x,\hat z) \le 
d\big(T^{k}(\hat x),T^{k}(\hat z))
\le  d\big(T^{k}(\hat x),\hat x)+d(\hat x,\hat z)+
 d\big(\hat z,T^{k}(\hat z)\big)
$$
$$
\le d(\hat x,\hat z) +2 \,A^{R_{i}-j}\big(A^{k}+1\big)\,r
$$
$$
\le d(\hat x,\hat z) +4 \, A^{(\mathfrak{c}+\mathfrak{a})\log (r^{-1})}r\;.
$$
It follows that 
$$
d(\hat x,\hat z) \le 4\, A^{(\mathfrak{c}+\mathfrak{a})\log (r^{-1})}r\;.
$$
This implies
$$
d\big(T^{k}(\hat x),T^{k}(\hat z)\big) 
\le A^{k} \ d(\hat x,\hat z) 
\le 4 \ A^{(2\mathfrak{c}+\mathfrak{a})\log (r^{-1})}r = 4\ r^{1-(2\mathfrak{c}+\mathfrak{a})\log A}\ .
$$
Since $\mathfrak{c}=1/(6\log A)$ and $\mathfrak{a}\in \ (0,\frac{2}{3\log A})$, we have $1-(2\mathfrak{c}+\mathfrak{a})\log A>0$.
Combining both cases, we obtain
$$
d\big(T^{k}(\hat x),T^{k}(\hat z)\big)\le C' r^{\mathfrak{b}}
$$
where $C'$ is a positive constant (independent of $x$, $z$ and $r$) and
$$
\mathfrak{b}=\min\big\{-\mathfrak{a}_{0}\log\alpha\,,\, \frac23 -\mathfrak{a}\log A\big\}\;.
$$
Letting $\gamma'=T^{R_{i}}\big(\Lambda_{i}\cap\gamma\big)\in\legu$,
it follows immediately that for any $i$ we have
$$
\leb_{\gamma'}\big\{T^{R_{i}}\big(\Lambda_{i}\cap\gamma\cap 
T^{k-j}\big(\mathcal{N}_{r}(k)\big)\big)\big\}\le C'' r^{\dimu\,\mathfrak{b}}\;,
$$
for a positive constant $C''$ independent of $\gamma$, $i$, $k$.
Using (P4)(b) and (P5)(b) we get 
\begin{equation}\label{lapp}
\frac{\leb_{\gamma}\big\{\Lambda_{i}\cap T^{k-j}\big(\mathcal{N}_{r}(k)\big)\big\}}{\leb_{\gamma}\big(\Lambda_{i})}\le 
C''' r^{\dimu\,\mathfrak{b}}\;.
\end{equation}
From \eqref{mutour} and the invariance of the measure, we have
$$
\mu\big(\mathcal{N}_{r}(k)\big)
=\mu\big(T^{k}\big(\mathcal{N}_{r}(k)\big)\big)
$$
$$
\le
\sum_{i,\,R_{i}\le \mathfrak{a}\log (r^{-1})}\sum_{j=0}^{R_{i}-1}m\big\{T^{-j}
\big(T^{k}\big(\mathcal{N}_{r}(k)\big)\big)\cap\Lambda_{i}\big\}+
\Omega^2\big(\mathfrak{a}\log (r^{-1})\big)\,.
$$
For fixed $k$, $i$ and $j$ we can use the expression \eqref{desintm} to
obtain
$$
m\big\{T^{k-j}\big(\mathcal{N}_{r}(k)\big)\cap\Lambda_{i}\big\}=\int_{\legu}\d\nu(\gamma)
\int_{\gamma\cap \Lambda_{i}\cap 
T^{k-j}\big(\mathcal{N}_{r}(k)\big)}\rho_{\gamma}\;\d\leb_{\gamma}\;.
$$
Using the estimate \eqref{lapp} we get
$$
m\big\{T^{k-j}\big(\mathcal{N}_{r}(k)\big)\cap\Lambda_{i}\big\}\le 
C''' r^{\dimu\mathfrak{b}}\; \int_{\legu}\d\nu(\gamma)
\int_{\gamma\cap \Lambda_{i} }\rho_{\gamma}\;\d\leb_{\gamma}
$$
$$
=C''' r^{\dimu\mathfrak{b}}\; m\big(\Lambda_{i}\big)\;.
$$
This implies
\begin{eqnarray}
\nonumber
\mu\big(\mathcal{N}_{r}(k)\big) & \le & C''' r^{\dimu\mathfrak{b}}
\sum_{i,\,R_{i}\le \mathfrak{a}\log(r^{-1})}\sum_{j=0}^{R_{i}-1}
m\big(\Lambda_{i}\big)
+
\Omega^2\big(\mathfrak{a}\log (r^{-1})\big)
\\
\label{zestime}
& \le & C''' r^{\dimu\,\mathfrak{b}}+\Omega^2\big(\mathfrak{a}\log(r^{-1})\big)
\;.
\end{eqnarray}
This yields 
%\begin{equation}\label{estime2}
$$
\mu\big(\mauvais_{r}^{(1)}\big)\le 
\mathfrak{c} \ \log (r^{-1}) \Big[C''' r^{\dimu\,\mathfrak{b}}+\Omega^2\big(\mathfrak{a}\log(r^{-1})\big)\Big]\;.
$$
%\end{equation}
We now consider the case $1\le k<\lfloor\mathfrak{a}_{0}\log (r^{-1})\rfloor$ to estimate
$\mu\big(\mauvais_{r}^{(2)}\big)$. 
For a such a $k$, we define an integer $p(k)$ by
$$
p(k)=\big\lfloor \log_{2}(\mathfrak{a}_{0}\log (r^{-1}))-\log_{2}(k)\big\rfloor+1\;.
$$
and a radius
$$
r'(k)=2^{p(k)}\;\frac{A^{k2^{p(k)}}-1}{A^{k}-1}\,r\;.
$$
Observe that
$$
\mathfrak{a}_{0}\log {r'(k)}^{-1}\leq \mathfrak{a}_{0}\log {r}^{-1}\leq k':=k2^{p(k)}\le 4 \mathfrak{a}_{0}\log {r}^{-1}
$$
and since $A\ge 2$
$$
r'(k)\leq A^{2k 2^{p(k)}} r \le A^{8\mathfrak{a}_{0}\log {r}^{-1}} r \,.
$$
Applying Lemma \ref{itervide} we get
$$
\mathcal{N}_{r}(k)\subset \mathcal{N}_{r'(k)}(k')\subset \mathcal{N}_{A^{8\mathfrak{a}_{0}\log {r}^{-1}} r}(k')\, .
$$
Using the estimate \eqref{zestime} and choosing
$$
\mathfrak{a}_0=\frac{1}{16\log(A)}
$$
we obtain
$$
\mu\big(\mauvais_{r}^{(2)}\big)
= 
\mu\left(\bigcup_{k=1}^{\lfloor\mathfrak{a}_{0}\log (r^{-1})\rfloor}\mathcal{N}_{r}(k)\right)
\leq 
\mu\left(\bigcup_{\ell=\lfloor\mathfrak{a}_{0}\log (r^{-1})\rfloor}^{2\lfloor\mathfrak{a}_{0}\log (r^{-1})\rfloor}
\mathcal{N}_{A^{8\mathfrak{a}_{0}\log {r}^{-1}} r}(\ell)\right)
\le
$$
$$
\lfloor\mathfrak{a}_{0}\log (r^{-1})\rfloor \,\left(
C''' \, r^{\frac{\dimu\,\mathfrak{b}}{2}}+
\Omega^2\big(\mathfrak{a}\log r^{-\frac12}\big) \right)\;.
$$
The result follows by putting together all the estimates.
\end{proof}

\bigskip

In the next proposition,  we provide an estimate for the error term $R_2(\mu(B_r(x)),p)$.

\bigskip

\begin{proposition}\label{nimportequoi}
There exist constants $C>0$ and $\mathfrak{s}>0$ such that 
for any $r\in (0,1)$, for any $\mathfrak{a}\in(0,\frac{2}{3\log A})$, for $\mathfrak{b}=\mathfrak{b}(\mathfrak{a})$
as in Lemma \ref{courtnonu}, and there exists a measurable set $\mathcal{U}_r$ satisfying
$$
\mu(\mathcal{U}_r) \leq 
$$
$$
C\left[\Omega(\mathfrak{s}\log(r^{-1})/3)+ r^{\mathfrak{s}}+
\log (r^{-1})\ r^{\frac{\dimu\mathfrak{b}}{2}}+\log (r^{-1})\ \Omega^{2}\big(\mathfrak{a}
\log(r^{-1/2})\big) \right].
$$
such that for any $x\in \attrac\backslash\mathcal{U}_r$ and for all $p\ge2$,
\begin{align*}
R_{2} & \big(\mu\big(B_{r}(x)\big),p\big) \\
& \le C\mu\big(B_{r}(x)\big)\bigg[
(\log (r^{-1}))^{3}\ \Omega\left(\frac{1}{3}\;\min\left\{\frac14,\;
\frac{-\mathfrak{c}\log\alpha}{4}\right\}\log (r^{-1})\right) \\
 & \qquad\qquad+\max\left\{r^{\frac12},\alpha^{\frac{\mathfrak{c}}{2}\log (r^{-1})}\right\}
+ p\big( r^{\frac{\mathfrak{r}}{2}} +r^{-3-n}\; \Omega^2\big((\log (r^{-1}))^{2}\big)\big)\bigg].
\end{align*}
The constants $\mathfrak{r}$ and $\mathfrak{c}$ are those appearing in Lemma \ref{mesureboule}
and Lemma \ref{courtnonu}, respectively.
\end{proposition}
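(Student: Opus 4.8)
The plan is to exploit that, with $X_n=\1_{B_r(x)}\circ T^{n-1}$ and $\mu$ being $T$‑invariant,
$$
R_2\big(\mu(B_r(x)),p\big)=\sum_{\ell=1}^{p-1}\mu\big(B_r(x)\cap T^{-\ell}B_r(x)\big),
$$
and to cut this sum at the threshold $\ell_0:=\lfloor\mathfrak{c}\log(r^{-1})\rfloor$ of Lemma \ref{courtnonu}. I would take $\mathcal{U}_r$ to be the union of the set $\mauvais_r$ of Lemma \ref{courtnonu} (with the very same $\mathfrak{a}_0,\mathfrak{a},\mathfrak{b}$) and the exceptional set produced by Lemma \ref{mesureboule}; the latter bounds, off that set, the mass of the spherical coronas $B_{r+\delta}(x)\setminus B_r(x)$ and in particular controls $\mu(B_r(x))$ from above and below by fixed powers of $r$. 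Adding the two measure estimates gives precisely the asserted bound on $\mu(\mathcal{U}_r)$ (its last two terms are $\mu(\mauvais_r)$, its first two come from Lemma \ref{mesureboule}). Then I fix $x\notin\mathcal{U}_r$: if $1\le\ell\le\ell_0$ and $z\in B_r(x)\cap T^{-\ell}B_r(x)$, then $T^\ell z\in B_r(x)\cap T^\ell(B_r(x))$, so $x\in\mauvais_r$, a contradiction; hence $\mu(B_r(x)\cap T^{-\ell}B_r(x))=0$ for all $\ell\le\ell_0$ and that block of the sum disappears. This is exactly why the constant $\mathfrak{c}$ of Lemma \ref{courtnonu} shows up in the statement.

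For the remaining block $\ell_0<\ell\le p-1$ I would invoke the decay of correlations \eqref{decor}. Choose a scale $\delta=\delta(r)$ (a small power of $r$) and a function $\varphi_{r,\delta}\in\mathcal{H}_\beta(M)$ with $\1_{B_r(x)}\le\varphi_{r,\delta}\le\1_{B_{r+\delta}(x)}$ and $\|\varphi_{r,\delta}\|_\beta=\Oun\,\delta^{-1}$. Then
$$
\mu\big(B_r(x)\cap T^{-\ell}B_r(x)\big)\le\int\varphi_{r,\delta}\cdot(\varphi_{r,\delta}\circ T^\ell)\,\d\mu\le\Big(\int\varphi_{r,\delta}\,\d\mu\Big)^2+C(\ell)\,\|\varphi_{r,\delta}\|_\beta^2 .
$$
The first term is $\le\mu(B_{r+\delta}(x))^2$, which by the corona estimate of Lemma \ref{mesureboule} together with the one‑dimensionality of the unstable manifolds is $\Oun\,\mu(B_r(x))$ times a power of $r$; summed over the block this yields the $\max\{r^{1/2},\alpha^{(\mathfrak{c}/2)\log(r^{-1})}\}$ and $p\,r^{\mathfrak{r}/2}$ contributions (the $\alpha$‑alternative coming from the contraction estimate (P4)(a), which off $\mauvais_r$ forces the two ball‑copies to be $\alpha^{\Theta(\log r^{-1})}$‑close in the unstable direction whenever they meet). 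The second term is $\Oun\,\Omega(\ell)^2\delta^{-2}$ by $C(\ell)=\Oun\,\Omega(\ell)^2$, and using the lower bound on $\mu(B_r(x))$ off the exceptional set one rewrites it as $\mu(B_r(x))$ times $\Oun\,r^{-3-n}\Omega(\ell)^2$. To sum over $\ell_0<\ell\le p-1$ I split once more at $\ell_1:=\lceil(\log(r^{-1}))^2\rceil$: on $\ell_0<\ell\le\ell_1$ there are only $\Oun(\log(r^{-1}))^2$ terms and, the tail of $R$ being exponential, $\Omega$ evaluated at a small multiple of $\log(r^{-1})$ is a power of $r$, giving a contribution $\Oun\,\mu(B_r(x))\big[(\log(r^{-1}))^{3}\,\Omega\big(\tfrac13\min\{\tfrac14,-\tfrac{\mathfrak{c}\log\alpha}{4}\}\log(r^{-1})\big)+\max\{r^{1/2},\alpha^{(\mathfrak{c}/2)\log(r^{-1})}\}\big]$; on $\ell_1<\ell\le p-1$ there are at most $p$ terms but $C(\ell)\le C(\ell_1)=\Oun\,\Omega((\log(r^{-1}))^2)^2$ is super‑polynomially small, giving $\Oun\,\mu(B_r(x))\,p\,\big(r^{\mathfrak{r}/2}+r^{-3-n}\,\Omega^2((\log(r^{-1}))^2)\big)$. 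Adding the two sub‑blocks reproduces the claimed bound.

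The main obstacle is the bookkeeping of the scale $\delta$: it must be small enough that $\mu(B_{r+\delta}(x))$ remains a genuine small power of $r$ and that the corona $B_{r+\delta}(x)\setminus B_r(x)$ is negligible next to $\mu(B_r(x))$ through Lemma \ref{mesureboule}, yet not so small that $\|\varphi_{r,\delta}\|_\beta^2=\Oun\,\delta^{-2}$ consumes more than the $r^{-3-n}$ afforded by the lower bound on $\mu(B_r(x))$; and these two competing requirements must be met simultaneously on each $\ell$‑sub‑block while keeping the number of terms ($\sim p$ on the far block) and the overall prefactor $\mu(B_r(x))$ under control. Optimizing these choices is what fixes the precise exponents $r^{1/2}$, $\alpha^{(\mathfrak{c}/2)\log(r^{-1})}$, $r^{\mathfrak{r}/2}$, $r^{-3-n}$ and the argument of $\Omega$ in the statement. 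A secondary, purely administrative point is to pin down $\mathfrak{a}_0,\mathfrak{a},\mathfrak{b},\mathfrak{c}$ to be literally those of Lemma \ref{courtnonu}, so that both the ``no short return'' conclusion and the bound on $\mu(\mauvais_r)$ are the ones quoted there.
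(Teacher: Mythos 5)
Your treatment of the short block ($\ell\le\lfloor\mathfrak{c}\log(r^{-1})\rfloor$, killed off $\mauvais_r$ by Lemma \ref{courtnonu}) and of the far block ($\ell>(\log(r^{-1}))^2$, handled by smoothing $\1_{B_r(x)}$ and the correlation bound \eqref{decor}, then converting via the lower bound $\mu(B_r(x))>r^{n+1}$ from Lemma \ref{mesbou}) matches the paper. The genuine gap is the intermediate block $\lfloor\mathfrak{c}\log(r^{-1})\rfloor\le\ell\le(\log(r^{-1}))^2$, which you also propose to treat by decay of correlations. That cannot work: at times $\ell$ of order $\log(r^{-1})$ one only has $C(\ell)=\Oun\,\Omega^2(\mathfrak{c}\log(r^{-1}))$, which even under an exponential tail is merely $r^{2\theta\mathfrak{c}}$ with $\theta$ and $\mathfrak{c}=1/(6\log A)$ fixed by the system, while your Lipschitz norm costs $\delta^{-2}$ and the conversion into a multiple of $\mu(B_r(x))$ costs another $r^{-n-1}$; nothing guarantees $2\theta\mathfrak{c}>n+3$, so the resulting term need not be small, and in any case it would have the shape $r^{-(n+3)}\Omega^2(\mathfrak{c}\log(r^{-1}))$ rather than the stated $(\log(r^{-1}))^{3}\,\Omega\big(\tfrac13\min\{\tfrac14,-\tfrac{\mathfrak{c}\log\alpha}{4}\}\log(r^{-1})\big)$ — note the \emph{first} power of $\Omega$ and the absence of any negative power of $r$, which already signals a different mechanism. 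The paper's proof of exactly this block is the heart of the proposition: it works inside the tower, discarding returns with $R>s$ via Corollary \ref{besico2mu} and Lemma \ref{besicodeux} (this is where $\omega_1,\omega_2$ and the exceptional sets $\mathscr{C}_{\omega_1},\mathscr{C}_{\omega_2}$ come from), then partitions each $\gamma\cap\tilde\Lambda_i$ into the minimal cylinders $\mi_{\gamma,i,j,\ell,r}$ of unstable diameter $\le r$, and uses bounded distortion on one-dimensional unstable leaves together with the lower bounds \eqref{zulu1}--\eqref{zulu2} on $|T^{j+\ell}\zeta_\tau(\gamma)|$ to bound the conditional measure of $T^{-j-\ell}B_r(x)$ inside each cylinder by $\Oun\,A^{3s}\max\{r,\alpha^{\mathfrak{c}\log(r^{-1})}\}$, finally comparing $\mu(B_{2r}(x))$ with $\mu(B_r(x))$ by Lemma \ref{besico1}. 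Your appeal to ``(P4)(a) forcing the two ball-copies to be close off $\mauvais_r$'' is a placeholder for precisely this argument, and it is also the only place where the hypothesis $\dimu=1$ enters.

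A symptom of the missing step is your bad set: $\mathcal{U}_r$ must also contain $\mathscr{C}_{\omega_1}$, $\mathscr{C}_{\omega_2}$, $\mathscr{E}_{r,\mathfrak{s}}$ (from Lemma \ref{besico1}) and $\mathcal{A}_r$ (from Lemma \ref{mesbou}), and it is these sets — not Lemma \ref{mesureboule} alone — that produce the $\Omega(\mathfrak{s}\log(r^{-1})/3)$ and $r^{\mathfrak{s}}$ terms in the stated bound on $\mu(\mathcal{U}_r)$. Also note two smaller misattributions: Lemma \ref{mesureboule} is an upper bound $\mu(B_r(x))\le Cr^{\mathfrak{r}}$ (the corona estimate is Proposition \ref{courons}, used for $R_1$, not here), and the lower bound $\mu(B_r(x))>r^{n+1}$ comes from Lemma \ref{mesbou} with $\mathfrak{g}=1$.
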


\medskip

\begin{proof}
We have
$$
R_2\big(\mu(B_r(x)),p\big)=
$$
$$
\left(\sum_{\ell=1}^{\lfloor \mathfrak{c}\log (r^{-1})\rfloor-1}
+
\sum_{\ell=\lfloor \mathfrak{c}\log (r^{-1})\rfloor}^{p-1}\right)
\E\bigg(\1_{B_r(x)} \1_{B_r(x)}\circ T^{\ell}\bigg).
$$
The first sum is controlled using Lemma \ref{courtnonu}: it is empty
if $x\in \attrac\backslash\mauvais_{r}$.
Thus, from now on, we assume that $\ell \ge \lfloor \mathfrak{c}\log (r^{-1})\rfloor$.\newline
Let
$$
s:=\frac{1}{3}\;\min\left\{\frac14,\;
\frac{-\mathfrak{c}\log\alpha}{4}\right\}\log (r^{-1})
$$
and
$$
\ell_{0}:=(\log (r^{-1}))^{2}.
$$
We use Corollary \ref{besico2mu} with $q=s$ and $\omega=\omega_1$ where
$$
\omega_1=
\sqrt{\sum_{i, R_i > s} \sum_{j=0}^{R_i -1} m\big( \Lambda_i\big)}
$$
and formula \eqref{mutour} to get
$$
\E\Big(\1_{B_r(x)} \ \1_{B_r(x)}\circ T^{\ell}\Big) \leq 
$$
$$
\sum_{i, R_i \leq s} \sum_{j=0}^{R_i -1} m\big\{\Lambda_i \cap T^{-j} B_r(x)\cap T^{-j-\ell} B_r(x)\big\}
+ \omega_1 \ \mu(B_r(x)),
$$
for any $x\in\attrac$ outside of the set $\mathscr{C}_{\omega_1}$ such that
\begin{equation}\label{NY1}
\mu(\mathscr{C}_{\omega_1}) \leq p(n)\ \omega_1.
\end{equation}
For each $i$ such that $R_i\leq s$, we define the set
$$
\tilde{\Lambda}_i=\big\{ x\in \Lambda_i : \forall j\leq \ell_0, R((T^R)^{j}(x))\leq s\big\}.
$$
Let $\lambda_0,\lambda_1$ be the finite positive measures defined by
\begin{eqnarray*}
\lambda_0(S) & = & \sum_{i,R_i\leq s} \sum_{j=0}^{R_i -1} m(\Lambda_i\cap T^{-j} S)\\
\lambda_1(S) & = & \sum_{i,R_i\leq s} \sum_{j=0}^{R_i -1} m(\Lambda_i\backslash \tilde{\Lambda}_i \cap T^{-j} S).
\end{eqnarray*}
We have 
\begin{eqnarray*}
\lambda_1(M) & = &\sum_{i,R_i\leq s} \sum_{j=0}^{R_i -1} m(\Lambda_i\backslash \tilde{\Lambda}_i) \leq (s+1) \ \sum_{i,R_i\leq s} 
m(\Lambda_i\backslash \tilde{\Lambda}_i)\\
& \leq &
(s+1)\ m\left( \bigcup_{j=0}^{\ell_0} \big(T^R\big)^{-j} \{R\geq s\}\right)\\
& \leq & (s+1)\ \ell_0 \ m\{R\geq s\}\, ,
\end{eqnarray*}
where the last inequality follows from the $T^R$-invariance of $m$. We now apply Lemma \ref{besicodeux}
to the measures $\lambda_0$ and $\lambda_1$ defined above, and $\omega=\omega_2$ defined as
$$
\omega_2=\sqrt{(s+1)\ \ell_0 \ m\{R\geq s\}}.
$$
We have
\begin{align}
\nonumber
& \sum_{i, R_i \leq s} \sum_{j=0}^{R_i -1} m\big( \Lambda_i \cap T^{-j} B_r(x)\cap T^{-j-\ell} B_r(x)\big) \\
\nonumber
& \leq 
\sum_{i, R_i \leq s} \sum_{j=0}^{R_i -1} m\big( \tilde{\Lambda}_i \cap T^{-j} B_r(x)\cap T^{-j-\ell} B_r(x)\big) +
\omega_2\ \mu_0(B_r(x))\\
& 
\label{lemachin}
\leq \sum_{i, R_i \leq s} \sum_{j=0}^{R_i -1} m\big( \tilde{\Lambda}_i \cap T^{-j} B_r(x)\cap T^{-j-\ell} B_r(x)\big) +
\omega_2\ \mu(B_r(x))
\end{align}
for any $x\in\attrac$ outside of the set $\mathscr{C}_{\omega_2}(\lambda_0,\lambda_1,r)$ such that
$$
\lambda_0(\mathscr{C}_{\omega_2}(\lambda_0,\lambda_1,r)) \leq p(n)\ \omega_2
$$
which implies 
\begin{equation}\label{NY2}
\mu(\mathscr{C}_{\omega_2}(\lambda_0,\lambda_1,r)) \leq p(n)\ \omega_2 + \omega_1^2.
\end{equation}
For any $\gamma\in\legu$ and any finite sequence of integers $i_0,\ldots,i_{m}$ ($m\geq 1$),
we define the following (non-empty) subset of $\gamma$:
$$
\zeta_{i_0,\ldots,i_{m}}(\gamma)=\{x\in \gamma\cap \tilde\Lambda_{i_0} : (T^R)^p(x)\in \Lambda_{i_p}\;\forall 1\leq p \leq m\}.
$$
For any integers $i_0$, $j<R_{i_0},\ell$ and $\gamma\in\legu$, for any $r>0$,
we define 
$$
\mi_{\gamma,i_0,j,\ell,r}=\big\{ (i_0,\ldots,i_{m})\;\textup{minimal}\;\textup{such that}\; 
|T^j \zeta_{i_0,\ldots,i_{m}}(\gamma)|\leq r\;\textup{and}\;
$$
$$
\sum_{k=0}^m R_{i_k}(x) \geq j+\ell\; \textup{for}\;x\in \zeta_{i_0,\ldots,i_{m}}(\gamma)\big\},
$$
where $|\cdot |$ denotes the diameter of $\zeta_{i_0,\ldots,i_{m}}(\gamma)$.

By `$(i_0,\ldots,i_{m})$ minimal' we mean that for the sequence $(i_0,\ldots,i_{m-1})$
one of the two conditions is violated.
Observe that from minimality we have
either
$$
\sum_{k=0}^{m-1} R_{i_k}(x) < j+\ell
$$
or
$$
\sum_{k=0}^{m-1} R_{i_k}(x) \geq  j+\ell\quad\textup{and}\quad |T^j \zeta_{i_0,\ldots,i_{m-1}}(\gamma)|>r.
$$
It is easy to verify that for any $\gamma,i_0,j<R_{i_0},\ell,r$, $\mi_{\gamma,i_0,j,\ell,r}$ is a (finite)
partition of $\gamma\cap \tilde{\Lambda}_{i_0}$ up to a set of Lebesgue measure
zero.

\begin{itemize}

\item
If $\sum_{k=0}^{m-1} R_{i_k}(x) < j+\ell$, $(T^R)^m \zeta_{i_0,\ldots,i_{m}}(\gamma)=\Lambda \cap \gamma_1$ for
some $\gamma_1\in\legu$.
Since $\ell<R_{i_m}\leq s$, we have for some constant $c>0$
\begin{equation}\label{zulu1}
|T^{j+\ell}\zeta_{i_0,\ldots,i_{m}}(\gamma)| \geq c\ A^{-s}.
\end{equation}
\item 
If $\sum_{k=0}^{m-1} R_{i_k}(x) \geq j+\ell$ and $|T^j \zeta_{i_0,\ldots,i_{m-1}}(\gamma)|>r$,
then we have for some $\gamma_2\in\legu$
$$
(T^R)^{m-1} \zeta_{i_0,\ldots,i_{m-1}}(\gamma)=\Lambda \cap \gamma_2
$$
and
$$
(T^R)^{m-1} \zeta_{i_0,\ldots,i_{m}}(\gamma)=\Lambda_{i_m} \cap \gamma_2.
$$
Since the maximal expansion factor is $A$ and $R_{i_m}\leq s$, we have
$$
|\Lambda_{i_m} \cap \gamma_2| \geq A^{-s}.
$$
Hence
$$
\frac{|(T^R)^{m-1} \zeta_{i_0,\ldots,i_{m}}(\gamma)|}{|(T^R)^{m-1} \zeta_{i_0,\ldots,i_{m-1}}(\gamma)|}
\geq A^{-s}.
$$
If $\dimu=1$, the distortion of the differential along a backward orbit of a local unstable
manifold is uniformly bounded. Therefore, since $j\leq \sum_{k=0}^{m-1} R_{i_k}(x)$, 
we get
$$
\frac{|T^{j} \zeta_{i_0,\ldots,i_{m}}(\gamma)|}{|T^{j}\zeta_{i_0,\ldots,i_{m-1}}(\gamma)|}
\geq C\ A^{-s}
$$
which implies
$$
|T^{j} \zeta_{i_0,\ldots,i_{m}}(\gamma)| \geq C\ r \ A^{-s}.
$$
By the uniform backward contraction along unstable manifolds (cf. (P4)(a) in Section \ref{NUDS}), 
and since $\ell \ge \lfloor \mathfrak{c}\log (r^{-1})\rfloor$, we get
\begin{equation}\label{zulu2}
|T^{j+\ell} \zeta_{i_0,\ldots,i_{m}}(\gamma)| \geq C\ \alpha^{-\ell}\ r \ A^{-s}\geq C\ \alpha^{-\mathfrak{c}\log (r^{-1})}\ r \ A^{-s}.
\end{equation}
\end{itemize}
%%%%%%%%
We now estimate the first term in \eqref{lemachin}. We will use the fact that,
if $\tau\in \mi_{\gamma,i,j,\ell,r}$,
$T^j \zeta_{\tau}(\gamma) \cap B_r(x)\neq \emptyset$
and $|T^j \zeta_{\tau}(\gamma) |\leq r$, then
we have $T^j \zeta_{\tau}(\gamma) \subset B_{2r}(x)$.
We have
\begin{align}
\label{santiago}
& \sum_{i, R_i \leq s} \sum_{j=0}^{R_i -1} m\big( \tilde{\Lambda}_i \cap T^{-j} B_r(x)\cap T^{-j-\ell} B_r(x)\big)\\
\nonumber
& =
\sum_{j=0}^{s-1}  m\left(\bigcup_{i: j+1\leq R_i\leq s}\tilde{\Lambda}_i \cap T^{-j} B_r(x)\cap T^{-j-\ell} B_r(x)\right)\\
\nonumber
& =
\sum_{j=0}^{s-1}  \int_{\legu} \d\nu(\gamma)\int_{\gamma\cap \bigcup_{i: j+1\leq R_i\leq s}\tilde{\Lambda}_i }  
\1_{\{T^{-j} B_r(x)\}}\ \1_{\{T^{-j-\ell} B_r(x)\}}
\rho_{\gamma} \d \leb_\gamma\\
\nonumber
& =
\sum_{j=0}^{s-1}  \int_{\legu} \d\nu(\gamma)
\sum_{i: j+1\leq R_i\leq s} \,  \sum_{\tau \in \mi_{\gamma,i,j,\ell,r}} \\
\nonumber
& \qquad\qquad \qquad \int_{\zeta_{\tau}(\gamma)} 
\1_{\{T^{-j} B_r(x)\}}\ \1_{\{T^{-j-\ell} B_r(x)\}}\rho_{\gamma} \d \leb_\gamma\\
\nonumber
& \leq 
\sum_{j=0}^{s-1}  \int_{\legu} \d\nu(\gamma)
\sum_{i: j+1\leq R_i\leq s} \,\sum_{\stackrel{\tau \in \mi_{\gamma,i,j,\ell,r}}{ \zeta_{\tau}(\gamma)
\cap T^{-j} B_r(x)\neq \emptyset}} \\
\nonumber
& \qquad\qquad\qquad
\int_{\zeta_{\tau}(\gamma)}  \1_{\{T^{-j} B_{2r}(x)\}}\ \1_{\{T^{-j-\ell} B_r(x)\}}
\rho_{\gamma} \d \leb_\gamma\\
\nonumber
& =
\sum_{j=0}^{s-1}  \int_{\legu} \d\nu(\gamma)
\sum_{i: j+1\leq R_i\leq s} \,\sum_{\stackrel{\tau \in
    \mi_{\gamma,i,j,\ell,r}}{ \zeta_{\tau}(\gamma)
\cap T^{-j} B_r(x)\neq \emptyset}}\\
\nonumber
& \qquad
\frac{ \int_{\zeta_{\tau}(\gamma)} \1_{\{T^{-j} B_{2r}(x)\}}\ \1_{\{T^{-j-\ell} B_r(x)\}}
\rho_{\gamma} \d \leb_\gamma}{\int_{\zeta_{\tau}(\gamma)}  \1_{\{T^{-j} B_{2r}(x)\}}
\rho_{\gamma} \d \leb_\gamma}\; 
\int_{\zeta_{\tau}(\gamma)}  \1_{\{T^{-j} B_{2r}(x)\}}
\rho_{\gamma} \d \leb_\gamma.
\end{align}
We bound the prefactor of the previous integral as follows:
\begin{align*}
\frac{ \int_{\zeta_{\tau}(\gamma)} \1_{\{T^{-j} B_{2r}(x)\}}\ \1_{\{T^{-j-\ell} B_r(x)\}}
\rho_{\gamma} \d \leb_\gamma}{\int_{\zeta_{\tau}(\gamma)}  \1_{\{T^{-j} B_{2r}(x)\}}
\rho_{\gamma} \d \leb_\gamma} &=
\frac{ \int_{\zeta_{\tau}(\gamma)}  \1_{\{T^{-j-\ell} B_r(x)\}}
\rho_{\gamma} \d \leb_\gamma}{\int_{\zeta_{\tau}(\gamma)}  
\rho_{\gamma} \d \leb_\gamma}\\
& \leq
C\ \frac{\int_{\zeta_{\tau}(\gamma)}   \1_{\{T^{-j-\ell} B_r(x)\}}
\d \leb_\gamma}{ \int_{\zeta_{\tau}(\gamma)}  \d \leb_\gamma}\cdot
\end{align*}
Let $m'\le m$ be the smallest integer such that
$$
\sum_{k=0}^{m'} R_{i_k}(x)\ge j+\ell\;.
$$
Let
$$
t:= \sum_{k=0}^{m'} R_{i_k}(x)-j-\ell\;.
$$
We have for $\tau\in \mi_{\gamma,i,j,\ell,r}$
$$
T^{j+\ell+t} (\zeta_{\tau}(\gamma)) \subset
\Lambda\cap \tilde{\gamma},\quad \textup{for some}\;\;\tilde{\gamma}\in\legu.
$$
From (P4)(b) (Section \ref{NUDS}) we obtain since  $0\le t\le s$ 
\begin{align*}
\frac{ \int_{\zeta_{\tau}(\gamma)}  \1_{\{T^{-j-\ell} B_r(x)\}}
\d \leb_\gamma}{ \int_{\zeta_{\tau}(\gamma)}  \d \leb_\gamma} & \leq
C\ \frac{\leb_{\tilde{\gamma}}\big(T^t(B_r(x))\cap \tilde{\gamma}\big)}
{\leb_{\tilde{\gamma}}\big(T^{j+\ell+t} (\zeta_{\tau}(\gamma))
\cap \tilde{\gamma}\big)}\\
& \le C A^{2s} r \;\frac{1}{\big|T^{j+\ell} (\zeta_{\tau}(\gamma)) \big|}\\
& \le C A^{3s} \max\left\{r,\alpha^{\mathfrak{c}\log (r^{-1})}
\right\},
\end{align*}
where the last inequality follows from \eqref{zulu1} and \eqref{zulu2}.
%%%%%%%%%%%%%%%%%%%%%%%%%%%%%
Therefore we have using \eqref{santiago} and the above estimates 
\begin{align*}
& \sum_{i, R_i \leq s} \sum_{j=0}^{R_i -1} m\big( \tilde{\Lambda}_i \cap
T^{-j} B_r(x)\cap T^{-j-\ell} B_r(x)\big)\\
& \le
C A^{3s} \max\left\{r,\alpha^{\mathfrak{c}\log (r^{-1})}\right\} \times \\
& \qquad \sum_{j=0}^{s-1}  \int_{\legu} \d\nu(\gamma)
\sum_{i: j+1\leq R_i\leq s} \
\sum_{\stackrel{\tau \in\mi_{\gamma,i,j,\ell,r}}{ \zeta_{\tau}(\gamma)
\cap T^{-j} B_r(x)\neq \emptyset}}
 \int_{\zeta_{\tau}(\gamma)} \1_{\{T^{-j} B_{2r}(x)\}}
\rho_{\gamma} \d \leb_\gamma\\
& \leq
CA^{3s} \max\left\{r,\alpha^{\mathfrak{c}\log (r^{-1})}\right\}\;
\sum_{j=0}^{s-1}  
\sum_{i: j+1\leq R_i\leq s}\int_{\legu} \d\tilde{\nu}(\gamma)
 \int_{\gamma} \1_{\{T^{-j} B_{2r}(x)\}}
\rho_{\gamma} \d \leb_\gamma\\
& \leq 
C  A^{3s} \max\left\{r,\alpha^{\mathfrak{c}\log (r^{-1})}\right\}\; \mu(B_{2r}(x)),
\end{align*}
where the last inequality follows from \eqref{mutour}. 
Using Lemma \ref{besico1} we get for $x\notin \mathscr{E}_{r,\mathfrak{s}}$ that
\eqref{santiago} is bounded from above by
$$
C\   A^{3s} \ r^{-\mathfrak{s}}\ \max\left\{r,\alpha^{\mathfrak{c}\log(r^{-1})}\right\} \;  \mu(B_{r}(x)).
$$
Collecting the above estimates, we obtain for any $\ell\leq \ell_0$ that
\begin{align*}
&\E\Big[\1_{B_r(x)} \ \1_{B_r(x)}\circ T^{\ell}\Big]\leq\\
& \qquad \qquad \left( \omega_1+\omega_2+ C\   A^{3s} \ r^{-\mathfrak{s}}\
\max\left\{r,\alpha^{\mathfrak{c}\log (r^{-1})}\right\}\right)\ \mu(B_r(x))
\end{align*}
for any $x$ outside the set
\begin{equation}\label{leuun}
\mathcal{T}_r:=\mathscr{C}_{\omega_1}\cup \mathscr{C}_{\omega_2} \cup \mathscr{E}_{r,\mathfrak{s}}.
\end{equation}
\bigskip

We now consider the case $\ell>\ell_0$.
We define the following Lipschitz function:
$$
\psi_{x,r}(y)=\left\{\begin{array}{lcl}
1 & \mathrm{if} & d(x,y)\le r\\
2-\frac{d(x,y)}{r} & \mathrm{if} & r\le d(x,y)\le 2r\\
0 & \mathrm{if} & 2r\le d(x,y)\;.
\end{array}\right.
$$
The Lipschitz constant of $\psi_{x,r}$ is $1/r$. We have
$$
\E\Big(\1_{B_r(x)} \ \1_{B_r(x)}\circ T^{\ell}
\Big)
\le 
\int \psi_{x,r}(y) \; \psi_{x,r}(T^{\ell}(y))\;\d\mu(y).
$$
Using the decay of correlations 
\eqref{decor}, we obtain  for
any $x$, for any $r\in(0,1)$ and for any integer $\ell$ 
$$
\int \psi_{x,r}(y) \; \psi_{x,r}(T^{\ell}(y))\;\d\mu(y)\le
\left(\int \psi_{x,r}(y) \;\d\mu(y)\right)^{2}+r^{-2} \; C(\ell).
$$
Since $\psi_{x,r}\le \1_{B_{2r}(x)}$, using Lemma \ref{besico1} and Lemma \ref{mesureboule}, we
get for $x\notin \mathscr{E}_{r,\mathfrak{s}}\cup \mathcal{J}_r$ we get 
\begin{align*}
\E\Big(\1_{B_r(x)} \ \1_{B_r(x)}\circ T^{\ell}\Big) & \le \mu\big(B_{2r}(x)\big)^{2}+ r^{-2}\;C(\ell)\\
& \le r^{-2\mathfrak{s}}  \mu\big(B_{r}(x)\big)^{2}+r^{-2} \; C(\ell)\\
& \le C\ r^{\mathfrak{r}/2}\mu\big(B_{r}(x)\big)+r^{-2} \; C(\ell)\, ,
\end{align*}
where in the last inequality we chose $\mathfrak{s}\leq \mathfrak{r}/4$.
Using Lemma \ref{mesbou} for $\mathfrak{g}=1$,
we can write for $x\notin \mathcal{A}_{r}\cup \mathcal{J}_r$ 
$$
\E\Big[\1_{B_r(x)} \ \1_{B_r(x)}\circ T^{\ell}\Big]
\le \tilde{C}\big[r^{\mathfrak{r}/2} +r^{-3-n} \; C(\ell)\big]\mu\big(B_{r}(x)\big)
$$
for a constant $\tilde{C}>0$. 

\bigskip

We now fix 
$$
\mathfrak{s}=\min\left\{\frac14,\;\frac{1}{4\log A},\;
\frac{-\mathfrak{c}\log\alpha}{4},\;\frac{\mathfrak{r}}{4}\right\}
$$
and define the set 
$$
\mathcal{U}_r:= \mathcal{T}_r \cup \mathcal{A}_{r}\cup \mathcal{J}_r\cup \mauvais_{r}.
$$
Using \eqref{NY1}, \eqref{NY2}, \eqref{leuun}, Lemma \ref{besico1}, Lemma \ref{mesureboule}, Lemma \ref{mesbou} and Lemma \ref{courtnonu}
we obtain
\begin{align*}
\mu(\mathcal{U}_r) & \leq 
\mu(\mathscr{C}_{\omega_1})+\mu(\mathscr{C}_{\omega_2})+\mu(\mathscr{E}_{r,\mathfrak{s}}) + \mu\big(\mathcal{J}_r\big)+
\mu( \mathcal{A}_{r})+\mu(\mauvais_{r})\\
& \leq C\left[
\omega_1 + \omega_1^{2}+\ \omega_2 + r^{\mathfrak{s}}+r+\Omega(\mathfrak{s}\log(r^{-1}))  \right. \\
& \qquad \left. +\log (r^{-1})\,\left(r^{\frac{\dimu\,\mathfrak{b}}{2}}+
\Omega^2\big(\mathfrak{a}\log (r^{-\frac12})\big)
\right)
\right]\\
&  \le  C\left[ \Omega(s)+ r^{\mathfrak{s}}+\Omega\big(\mathfrak{s}\log(r^{-1})\big) \right. \\
& \qquad \left. +\log (r^{-1})\,r^{\frac{\dimu\,\mathfrak{b}}{2}}
+(\log (r^{-1}))\Omega^2\big(\mathfrak{a}\log(r^{-1/2})\big) \right]\\
& \leq C\left[ \Omega\big(\mathfrak{s}\log(r^{-1})/3\big)+ r^{\mathfrak{s}} \right. \\
& \qquad \left. +\log (r^{-1})\,r^{\frac{\dimu\,\mathfrak{b}}{2}}+
\log (r^{-1})\,\Omega^2\big(\mathfrak{a}\log(r^{-1/2})\big)\right],
\end{align*}
since $\Omega$ is a decreasing function.
We obtain
$$
\sum_{\ell=\lfloor \mathfrak{c}\log (r^{-1})\rfloor}^{p-1}
\E\bigg(\1_{B_r(x)} \ \1_{B_r(x)}\circ T^{\ell}\bigg)
$$
$$
\leq
\left\{
\begin{array}{lcl}
\ell_0 \left( \sqrt{\ell_0} \Omega(s) + C  A^{3s}
\max\left\{r^{3/4},\alpha^{3(\mathfrak{c}\log (r^{-1}))/4}\right\} \right)\mu\big(B_{r}(x)\big)
& \textup{if} & p\leq \ell_0\\ \\
\ell_0 \left[\sqrt{\ell_0} \Omega(s) + C  A^{3s}\max\Big\{r^{3/4},\alpha^{3(\mathfrak{c}\log (r^{-1}))/4}\Big\}\right] \mu\big(B_{r}(x)\big)\\ \\
+ p \left[r^{\mathfrak{r}/2} + r^{-3-n}\; \Omega^2(\ell_0)\right]\mu\big(B_{r}(x)\big)
& \textup{if} & p>\ell_0
\end{array}\right.
$$
for any $x\notin \mathcal{U}_r$. Recall that 
$$
s=\frac{1}{3}\;\min\left\{\frac14,\;
\frac{-\mathfrak{c}\log\alpha}{4}\right\}\times \log (r^{-1})
$$
and
$$
\ell_{0}=(\log (r^{-1}))^{2}.
$$
We get
\begin{align*}
& \sum_{\ell=\lfloor \mathfrak{c}\log (r^{-1})\rfloor}^{p-1}
\E\bigg(\1_{B_r(x)} \ \1_{B_r(x)}\circ T^{\ell}\bigg)
\le\\
&   \qquad  \qquad C\mu\big(B_{r}(x)\big)\bigg[
(\log (r^{-1}))^{3}\ \Omega\left(\frac{1}{3}\;\min\left\{\frac14,\;
\frac{-\mathfrak{c}\log\alpha}{4}\right\}\log (r^{-1})\right) +\\
& \qquad \qquad \qquad\max\left\{r^{1/2},\alpha^{\mathfrak{c}\log (r^{-1})/2}\right\}+
p\big( r^{\mathfrak{r}/2} +r^{-3-n}\; \Omega^2\big((\log (r^{-1}))^2\big)\bigg].
\end{align*}
This ends the proof.
\end{proof}

%%%%%%%%%
\subsection{Estimation of $R_1(\epsilon,N,p)$}

We shall have to deal with the measure
of certain coronas:
For any $r\in(0,1]$, $x\in\attrac$ and any $\delta>1$
we define the corona $\cour$ by 
%\begin{equation}\label{defcouronne}
$$
\cour=B_{r}(x)\backslash B_{r-r^{\delta}}(x).
$$
%\end{equation}
Let 
\begin{equation}\label{defv}
v:=1+\left\lceil -\frac{\log A}{\log \alpha}\right\rceil\cdot
\end{equation}
Define the set $\hat{\Lambda}_{\mathfrak{q},r}$ as the set of points $x\in \Lambda$
such that:
$$
R\big((T^R)^\ell(x)\big)\leq \mathfrak{q} \log (r^{-1})
$$ 
whenever $\ell$ is such that:
$$
\sum_{q=0}^{\ell-1} R\big((T^R)^q(x)\big)<(v+1)\mathfrak{q}\log (r^{-1}).
$$
For $x\in \hat{\Lambda}_{\mathfrak{q},r}$, define
$$
L_{\mathfrak{q},r}(x)=\min\bigg\{\ell \ \Big| \ \sum_{q=0}^{\ell} R\big((T^R)^q(x)\big)\geq (v+1)\mathfrak{q}\log (r^{-1})\bigg\}.
$$
Observe that
%\begin{equation}\label{sac}
$$
(v+1)\mathfrak{q}\log (r^{-1})\leq \sum_{q=0}^{L_{\mathfrak{q},r}(x)} R\big((T^R)^q(x)\big)\leq (v+2)\mathfrak{q}\log (r^{-1}).
$$
%\end{equation}
Define the following set of pieces of unstable disks
$$
\mathcal{G}_{\mathfrak{q},r}=
\left\{ \big(T^R\big)^{-L_{\mathfrak{q},r}(x)}\Big(\gamma^u\Big(
\big(T^R\big)^{L_{\mathfrak{q},r}(x)}(x)\Big)\Big)
\cap \Lambda, \, \forall x\in  \hat{\Lambda}_{\mathfrak{q},r}\right\}\,.
$$
Observe that $\mathcal{G}_{\mathfrak{q},r}$ is a partition of $\hat{\Lambda}_{\mathfrak{q},r}$ and that the
function $x\mapsto L_{\mathfrak{q},r}(x)$ is constant on the elements of $\mathcal{G}_{\mathfrak{q},r}$.

\bigskip

%%% Lemme
\begin{lemma}\label{geometrique}
There exists a constant $C>0$ such that for any $\mathfrak{q}>0$, for any $r\in(0,1)$
and for any $\eta\in \mathcal{G}_{\mathfrak{q},r}$ and for any $j\leq \mathfrak{q} \log (r^{-1})$, we
have, for all $\delta>1$,
$$
m_\gamma\big\{T^{-j}\big(\cour\big)\cap  \eta\big\} \leq C\ r^{\delta/2}\ A^{\mathfrak{q} \log (r^{-1})}
$$
where $\gamma$ is the element of $\legu$ containing $\eta$.
\end{lemma}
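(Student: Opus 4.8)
The plan is to transport the estimate to the image curve $T^{j}(\eta)$, where one‑dimensionality of the unstable direction makes the geometry elementary, and to control its intersection with the thin corona $\cour$ by a convexity argument. As a first step I would reduce to a one‑dimensional length estimate: since $\dimu=1$, both $\eta$ and $T^{j}(\eta)$ are arcs and $T^{j}|_{\eta}$ is a diffeomorphism onto $T^{j}(\eta)$, so writing $m_{\gamma}=\rho_{\gamma}\leb_{\gamma}$ with $\rho_{\gamma}\le B$ (see \eqref{densite}), using that $\leb_{\gamma}$ is arclength, and changing variables by $z=T^{j}y$ — whose one‑dimensional Jacobian is at most $\|DT^{-1}\|_{L^{\infty}}^{\,j}\le A^{j}$ — one obtains
\[
m_{\gamma}\big\{T^{-j}(\cour)\cap\eta\big\}\ \le\ B\,A^{j}\ \ell\big(\cour\cap T^{j}(\eta)\big)\ \le\ B\,A^{\mathfrak{q}\log(r^{-1})}\ \ell\big(\cour\cap T^{j}(\eta)\big),
\]
so it suffices to prove $\ell\big(\cour\cap T^{j}(\eta)\big)\le C\,r^{\delta/2}$.

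Next I would pin down the geometry of $T^{j}(\eta)$. By construction of $\mathcal{G}_{\mathfrak{q},r}$ one has $\eta=(T^{R})^{-L}(\gamma_{1}\cap\Lambda)$ for a \emph{full} unstable disk $\gamma_{1}=\gamma^{u}\big((T^{R})^{L}(x)\big)$, $L=L_{\mathfrak{q},r}(x)$, with every return time occurring along the itinerary $\le\mathfrak{q}\log(r^{-1})$; in particular the total height $h=\sum_{q=0}^{L}R\big((T^{R})^{q}(x)\big)\ge(v+1)\mathfrak{q}\log(r^{-1})$. The separation‑time recursion then gives $s_{0}(y,y')\ge h-\mathfrak{q}\log(r^{-1})\ge v\,\mathfrak{q}\log(r^{-1})$ for all $y,y'\in\eta$, so (P4)(a) and $j\le\mathfrak{q}\log(r^{-1})$ yield $\mathrm{diam}\big(T^{j}(\eta)\big)\le C\,\alpha^{(v-1)\mathfrak{q}\log(r^{-1})}=C\,r^{(v-1)\mathfrak{q}|\log\alpha|}$. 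Using (P4)(b) with $\dimu=1$ (so that $\det DT^{u}$ is a scalar expansion factor), $T^{j}|_{\eta}$ has distortion bounded by a universal constant; combining this with the uniform bounds on the geometry of the disks of $\legu$ and with the choice $v=1+\lceil-\log A/\log\alpha\rceil$ (which forces $A\alpha^{v}\le1$, i.e. $(v-1)|\log\alpha|\ge\log A$), one checks that $T^{j}(\eta)$ has arclength bounded by a universal constant and curvature at most $C\,r^{-c\mathfrak{q}}$ for some explicit $c=c(A)>0$. Hence, once $\mathfrak{q}$ is small enough that $c\mathfrak{q}<1$, the curve $T^{j}(\eta)$ becomes nearly straight at the scale $r$ for $r$ small.

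Finally I would run the corona estimate. Parametrising $T^{j}(\eta)$ by arclength $s\mapsto c(s)$ and putting $f(s)=d(c(s),x)^{2}$, one has, wherever $c(s)\in B_{r}(x)$, $f''(s)=2|c'(s)|^{2}+2\langle c(s)-x,c''(s)\rangle\ge2-2r\,\kappa(T^{j}(\eta))\ge1$ for $r$ small; thus $f$ is strongly convex on each connected component of $\{c\in B_{r}(x)\}$, on each such component $\cour\cap T^{j}(\eta)$ consists of at most two sub‑arcs along which $f$ runs between $(r-r^{\delta})^{2}$ and $r^{2}$, and strong convexity bounds the length of each such sub‑arc by $\sqrt{2\,(r^{2}-(r-r^{\delta})^{2})}\le2\,r^{(1+\delta)/2}$. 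Controlling the number of such components by the total turning of $c$ then gives $\ell\big(\cour\cap T^{j}(\eta)\big)\le C\,r^{(1+\delta)/2}\le C\,r^{\delta/2}$, which with the first step proves the lemma; and when $\mathfrak{q}$ is not small the asserted bound $C\,r^{\delta/2}A^{\mathfrak{q}\log(r^{-1})}$ already exceeds $\sup_{\gamma}m_{\gamma}(\gamma)$ on the range of $r$ for which it is not vacuous, so it holds trivially there.

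The main obstacle is the last two steps: although the choice of $v$ guarantees that $T^{j}(\eta)$ has bounded \emph{arclength}, this curve need not be globally close to a line, so one must control how many times it re‑enters $B_{r}(x)$ and how tangentially it can meet $\partial B_{r}(x)$ — the grazing case is precisely why the exponent is $\delta/2$ rather than something larger. It is here that one‑dimensionality of $\wuloc$ and the uniform bounds on the geometry of the unstable lamination, and of its forward iterates inside the tower, are used in an essential way; these are exactly the ingredients supplied by the appendix.
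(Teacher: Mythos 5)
Your overall route is the same as the paper's: pull the estimate back through $T^{j}$ at the cost of a factor $A^{j}\le A^{\mathfrak{q}\log(r^{-1})}$ and the density bound \eqref{densite}, use the block structure of $\mathcal{G}_{\mathfrak{q},r}$, (P4)(a) and the choice of $v$ in \eqref{defv} to see that $T^{j}(\eta)$ is a small, uniformly controlled ("almost flat") arc, and then invoke the chord-of-a-thin-corona estimate $\ell(\cour\cap T^{j}(\eta))\le C\,r^{(1+\delta)/2}\le C\,r^{\delta/2}$. Your convexity computation with $f(s)=d(c(s),x)^{2}$ is in fact a more explicit version of the step the paper only asserts ("since $T^{j}(\eta)$ is almost flat\dots"), and that part is welcome.

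However, there is a genuine gap in how you obtain the flatness, and in the case split it forces. You only claim curvature $\kappa(T^{j}(\eta))\le C\,r^{-c\mathfrak{q}}$, so your convexity argument needs $c\mathfrak{q}<1$, and you dispose of larger $\mathfrak{q}$ by saying the bound $C\,r^{\delta/2}A^{\mathfrak{q}\log(r^{-1})}=C\,r^{\delta/2-\mathfrak{q}\log A}$ is then trivially larger than $\sup_{\gamma}m_{\gamma}(\gamma)$. That is false in exactly the regime the lemma is later used: the bound exceeds a constant for all $r\in(0,1)$ only when $\delta/2\le\mathfrak{q}\log A$, whereas the statement requires a single constant valid for \emph{all} $\delta>1$ and \emph{all} $\mathfrak{q}>0$, and in Proposition \ref{decornuh} the lemma is applied with $\delta_{0}=2\big(n+2+(v+3)\mathfrak{q}\log A\big)$, i.e.\ $\delta/2>\mathfrak{q}\log A$, so there $r^{\delta/2-\mathfrak{q}\log A}\to0$ as $r\to0$ and nothing is trivial; for such pairs $(\mathfrak{q},\delta)$ with $c\mathfrak{q}\ge1$ neither branch of your dichotomy applies. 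What is needed (and what the paper uses, tersely) is a control of the embedding of $T^{j}(\eta)$ that is \emph{uniform in $\mathfrak{q}$ and $r$}: $T^{j}(\eta)$ is a sub-arc of diameter at most $C\alpha^{v\mathfrak{q}\log(r^{-1})}$ of a forward image of a disk of $\legu$, and these curves carry a uniform bound on size and curvature coming from the uniform geometry of the unstable disks and their images in the tower, not a bound deteriorating like $r^{-c\mathfrak{q}}$ through iteration of $\|D^{2}T\|$. With such a uniform curvature bound your convexity argument goes through for every $\mathfrak{q}>0$ and $\delta>1$ and the case split disappears; without it, the proof as written does not establish the lemma in the generality in which it is stated and used.
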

\begin{proof}
Since $T$ is a diffeomorphism we have
$$
m_\gamma\big\{T^{-j}\big(\cour\big)\cap  \eta\big\}= m_\gamma\big\{T^{-j}\big(\cour\cap T^j(\eta)\big)\big\}.
$$
We can write for any $y\in\eta$
$$
T^j(\eta)= T^{j-R(y)}\big(T^{R(y)}(\eta)\big).
$$
Observe that from the definition of $L_{\mathfrak{q},r}(y)$ above that for all $y\in\eta$
$$
T^{L_{\mathfrak{q},r}(y)}(\eta)=\gamma'\cap \Lambda
$$
for some $\gamma'\in\legu$. Therefore, from (P4)(a) and the definition of $v$ in \eqref{defv}, for
all $y\in\eta$, we have
$$
\big| T^{R(y)}(\eta)\big| \leq \alpha^{L_{\mathfrak{q},r}(y)-R(y)} \leq \alpha^{v \mathfrak{q}\log(r^{-1})}
\leq A^{-\mathfrak{q}\log(r^{-1})}\ r^{-\mathfrak{q}\log\alpha}.
$$
It follows that $T^{R(y)}(\eta)\subset \gamma"\in\legu$. Hence $T^{R(y)}(\eta)$ is a small embedded
disk. From the above estimate on $\big| T^{R(y)}(\eta)\big|$ we deduce that, for any $0\leq j \leq R(y)$,
$T^j(\eta)$ is an embedded disk and there is a control on the size and on the embedding which
is uniform in $r$. 
Namely, 
Since $T^j(\eta)$ is almost flat, there is a uniform constant $C>0$ such that
$$
\big| \cour\cap T^j(\eta)\big| \leq C\ r^{\delta/2}.
$$
The lemma follows from \eqref{leA} and the fact that $0\leq j \leq \mathfrak{q}\log(r^{-1})$.
\end{proof}

%%%% Proposition
\begin{proposition}\label{courons}
There exist constants $C>0$, $r_{0}\in(0,1)$, such that 
for any $r\in(0,r_{0})$ and any $\mathfrak{q}>0$, there exists a measurable set
$\tresmauvais_{r}$ satisfying
$$
\mu\big(\tresmauvais_{r}\big)\le C r
$$
and such that for any $x\in\attrac\backslash \tresmauvais_{r}$
we have for all $\delta>1$
$$
\mu\big(\cour\big)
\le C\ \mu\big(B_{r}(x)\big)\times
$$
$$
\left[r^{\frac{\delta}{2}-n-1}\ A^{(v+3)\mathfrak{q} \log (r^{-1})}+r^{-n-1}(1+v\mathfrak{q} \log (r^{-1}))^2\
 \Omega^2\big(\mathfrak{q}\log(r^{-1})\big)\right]\, ,
$$
where $v$ is defined in \eqref{defv}.
\end{proposition}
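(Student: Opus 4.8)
The strategy is to expand $\mu(\cour)$ by the tower formula \eqref{mutour} and to split the orbit segments according to whether they meet a tall tower before a prescribed amount of tower-time has elapsed; we take $r$ small, $r<r_{0}$, throughout. Write
$$
\mu(\cour)=\sum_{i}\sum_{j=0}^{R_{i}-1}m\big(\Lambda_{i}\cap T^{-j}\cour\big)=\Sigma_{\mathrm g}+\Sigma_{\mathrm b},
$$
where $\Sigma_{\mathrm g}$ collects the contributions of $\Lambda_{i}\cap\hat\Lambda_{\mathfrak{q},r}$ and $\Sigma_{\mathrm b}$ those of $\Lambda_{i}\backslash\hat\Lambda_{\mathfrak{q},r}$. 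Since every $x\in\hat\Lambda_{\mathfrak{q},r}$ satisfies $R(x)\le\mathfrak{q}\log(r^{-1})$ (apply the defining condition with $\ell=0$), every term of $\Sigma_{\mathrm g}$ has $j<R_{i}\le\mathfrak{q}\log(r^{-1})$, which is exactly the range in which Lemma \ref{geometrique} is available. The set $\tresmauvais_{r}$ will be the union of the exceptional sets furnished by Lemma \ref{besico1}, Lemma \ref{mesbou}, and by one further Besicovitch-covering argument used for $\Sigma_{\mathrm b}$, the free parameters of those lemmas being tuned so that each piece has $\mu$-measure $O(r)$.

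\medskip\noindent\textbf{The good part.} Fix $i,j$ with $\Lambda_{i}\cap\hat\Lambda_{\mathfrak{q},r}\neq\emptyset$. By the disintegration \eqref{desintm}--\eqref{densite}, and because the elements $\eta\in\mathcal{G}_{\mathfrak{q},r}$ contained in a given $\gamma\in\legu$ partition $\gamma\cap\hat\Lambda_{\mathfrak{q},r}$ up to $\leb_{\gamma}$-null sets, with $L_{\mathfrak{q},r}(\cdot)$ constant on each $\eta$, one gets
$$
m\big(\Lambda_{i}\cap\hat\Lambda_{\mathfrak{q},r}\cap T^{-j}\cour\big)\le B\int_{\legu}\d\nu(\gamma)\sum_{\eta\subset\gamma}\leb_{\gamma}\big(\eta\cap T^{-j}\cour\big).
$$
For each $\eta$ meeting $T^{-j}\cour$ one writes $\leb_{\gamma}(\eta\cap T^{-j}\cour)$ as the product of the ratio $\leb_{\gamma}(\eta\cap T^{-j}\cour)/\leb_{\gamma}(\eta\cap T^{-j}B_{2r}(x))$ with $\leb_{\gamma}(\eta\cap T^{-j}B_{2r}(x))$. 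The numerator is bounded by Lemma \ref{geometrique}; the denominator is bounded from below by reconstructing the full unstable disk $T^{L_{\mathfrak{q},r}}(\eta)$, pulling it back over the cumulative return time (which never exceeds $(v+2)\mathfrak{q}\log(r^{-1})$) and then by $T^{j}$ with $j\le\mathfrak{q}\log(r^{-1})$: the backward contraction (P4)(a) together with the uniform backward distortion bound --- valid because $\dimu=1$ --- shows that $T^{j}\eta$ is an almost flat disk of length at least $cA^{-(v+2)\mathfrak{q}\log(r^{-1})}$ and that $\leb_{\gamma}(\eta\cap T^{-j}B_{2r}(x))$ is, up to a bounded factor, $\min\{|T^{j}\eta|,r\}\,\leb_{\gamma}(\eta)/|T^{j}\eta|$. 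Hence the ratio is at most $C\,r^{\delta/2-1}A^{(v+3)\mathfrak{q}\log(r^{-1})}$, uniformly in $\eta,i,j$. Summing the surviving factors $\leb_{\gamma}(\eta\cap T^{-j}B_{2r}(x))$ over the disjoint $\eta$'s, then over $i$ and $j$, and using \eqref{mutour} backwards, gives
$$
\Sigma_{\mathrm g}\le C\,r^{\delta/2-1}A^{(v+3)\mathfrak{q}\log(r^{-1})}\,\mu\big(B_{2r}(x)\big).
$$
A final appeal to Lemma \ref{besico1} and Lemma \ref{mesbou}, which compare $\mu(B_{2r}(x))$ with $\mu(B_{r}(x))$ up to a power of $r$ outside exceptional sets of $\mu$-measure $O(r)$, produces the first summand $r^{\delta/2-n-1}A^{(v+3)\mathfrak{q}\log(r^{-1})}\mu(B_{r}(x))$ of the claimed bound.

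\medskip\noindent\textbf{The bad part.} Set $\lambda(S):=\sum_{i}\sum_{j=0}^{R_{i}-1}m\big((\Lambda_{i}\backslash\hat\Lambda_{\mathfrak{q},r})\cap T^{-j}S\big)$, a finite measure, so that $\Sigma_{\mathrm b}=\lambda(\cour)\le\lambda(B_{r}(x))$. For its total mass we split over $i$: towers with $R_{i}>\mathfrak{q}\log(r^{-1})$ contribute at most $\sum_{R_{i}>\mathfrak{q}\log(r^{-1})}R_{i}\,m(\Lambda_{i})=\Omega^{2}(\mathfrak{q}\log(r^{-1}))$, while towers with $R_{i}\le\mathfrak{q}\log(r^{-1})$ contribute at most $\mathfrak{q}\log(r^{-1})\,m(\Lambda\backslash\hat\Lambda_{\mathfrak{q},r})$. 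Since $\Lambda\backslash\hat\Lambda_{\mathfrak{q},r}$ lies in the union of the sets $(T^{R})^{-\ell}\{R>\mathfrak{q}\log(r^{-1})\}$ over the at most $C(1+v\mathfrak{q}\log(r^{-1}))$ indices $\ell$ that may occur before the cumulative return time reaches $(v+1)\mathfrak{q}\log(r^{-1})$, the $T^{R}$-invariance of $m$ gives $m(\Lambda\backslash\hat\Lambda_{\mathfrak{q},r})\le C(1+v\mathfrak{q}\log(r^{-1}))\,\Omega^{2}(\mathfrak{q}\log(r^{-1}))$, whence
$$
\lambda(M)\le C\big(1+v\mathfrak{q}\log(r^{-1})\big)^{2}\,\Omega^{2}\big(\mathfrak{q}\log(r^{-1})\big).
$$
A Besicovitch-covering argument applied to $\lambda$, with free parameter $r$, then produces a set of $\mu$-measure $O(r)$ outside of which $\lambda(B_{r}(x))\le C\,r^{-n}\lambda(M)\,\mu(B_{r}(x))$, the factor $r^{-n}$ being the loss in comparing a one-dimensional unstable slice with an $n$-dimensional ball (Lemma \ref{mesbou}). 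This gives the second summand, and taking $\tresmauvais_{r}$ to be the union of all the exceptional sets concludes.

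\medskip\noindent\textbf{Main obstacle.} The delicate step is the good part: one has to check that the disjoint images $T^{j}\eta$ of the cylinders $\eta\in\mathcal{G}_{\mathfrak{q},r}$ can be compared with $B_{2r}(x)$ through a prefactor that is \emph{uniform} in $\eta,i,j$, so that their sum rebuilds $\mu(B_{2r}(x))$ and nothing larger, and that the intervening expansion and contraction factors telescope to exactly $A^{(v+3)\mathfrak{q}\log(r^{-1})}$. Both points rest on the one-dimensionality of the unstable manifolds (through the uniform backward distortion bound) and on the precise definition of $L_{\mathfrak{q},r}$, which is what keeps the cumulative return time below $(v+2)\mathfrak{q}\log(r^{-1})$.
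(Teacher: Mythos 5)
Your proposal is correct in substance, and its skeleton coincides with the paper's: the decomposition of $\mu\big(\cour\big)$ via \eqref{mutour} into the contribution of $\hat\Lambda_{\mathfrak{q},r}$ and of its complement, the use of Lemma \ref{geometrique} together with the lower bound $|\eta|\ge c\,A^{-(v+2)\mathfrak{q}\log(r^{-1})}$ (where $\dimu=1$ enters), and the bound $\lambda(M)\le C(1+v\mathfrak{q}\log(r^{-1}))^{2}\,\Omega^{2}\big(\mathfrak{q}\log(r^{-1})\big)$ for the bad mass are all exactly the paper's steps. Where you genuinely diverge is the endgame, i.e.\ how absolute bounds become bounds relative to $\mu\big(B_{r}(x)\big)$. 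The paper is cruder: it bounds the good part by the $x$-independent quantity $C r^{\delta/2}A^{(v+3)\mathfrak{q}\log(r^{-1})}$ (comparing $m_\gamma\{T^{-j}\cour\cap\eta\}$ with $m_\gamma(\hat\Lambda_i\cap\eta)$ and summing $\sum_{i,j}m(\hat\Lambda_i)\le1$), bounds the bad part by $(1+v\mathfrak{q}\log(r^{-1}))^{2}\Omega^{2}$, and then simply divides by $\mu\big(B_{r}(x)\big)>r^{n+1}$, valid off the single set $\tresmauvais_r=\{\mu(B_r(\cdot))\le r^{n+1}\}$ of measure $\le Cr$ by Lemma \ref{mesbou}. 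That one division is the sole source of the $r^{-n-1}$ in both summands, and it makes $\tresmauvais_r$ independent of $\mathfrak{q}$ and $\delta$, a convenience used when Proposition \ref{courons} is invoked with two different $\mathfrak{q}$'s in Proposition \ref{decornuh}. You instead localize: the cylinder-by-cylinder comparison with $B_{2r}(x)$ (the curve-length/bounded-distortion device the paper uses inside Proposition \ref{nimportequoi}) plus Lemma \ref{besico1} for the good part, and Lemma \ref{besicodeux} for the bad part. This is more work but yields slightly sharper powers of $r$, hence implies the stated bound.

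Three small points to make your route airtight. For the first summand you need Lemma \ref{besico1} with $\mathfrak{s}=1$, giving $\mu(B_{2r}(x))\le r^{-1}\mu(B_r(x))$ off a set of measure $Cr$ and exponent $\delta/2-2\ge\delta/2-n-1$ (here $n\ge1$ is used); the alternative through Lemma \ref{mesbou} alone, $\mu(B_{2r}(x))\le1\le r^{-n-1}\mu(B_r(x))$, would lose a power of $r$. In the bad part, Lemma \ref{besicodeux} is stated for $\omega\in(0,1)$; when your threshold exceeds $1$ the inequality is trivial because $\lambda\le\mu$ setwise, so note this. Finally, your exceptional set depends on $\mathfrak{q}$ (through $\lambda$), which the statement's quantifiers permit but the paper's construction avoids.
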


\begin{proof}
We define
$$
\tresmauvais_{r}=\left\{x\,\big|\, \mu\big(B_{r}(x)\big)\le r^{n+1}\right\}\;.
$$
It follows from Lemma \ref{mesbou} that
$$
\mu\big(\tresmauvais_{r}\big)\le C\,r\;.
$$
We have 
$$
\mu\big(\cour\big) \leq 
\sum_{i,R_{i}< \lfloor\mathfrak{q}\log (r^{-1})\rfloor}^{\infty}
\sum_{j=0}^{R_{i}-1}m\big\{T^{-j}\big(\cour\big)\cap\Lambda_{i}\big\}
 +\Omega^2\big(\mathfrak{q}\log(r^{-1})\big).
$$
Define the sets $\hat\Lambda_i=\Lambda\cap \hat\Lambda_{\mathfrak{q},r}$, where $\hat\Lambda_{\mathfrak{q},r}$
is defined above.\newline
Now observe that from the definition of $\hat\Lambda_{\mathfrak{q},r}$ we have
\begin{align*}
& \sum_{i,R_{i}< \lfloor\mathfrak{q}\log (r^{-1})\rfloor}^{\infty}
\sum_{j=0}^{R_{i}-1}m\big(\Lambda_{i}\backslash \hat\Lambda_i\big)\\
& \qquad\leq
\mathfrak{q} \log (r^{-1})\; 
\sum_{i,R_{i}< \lfloor\mathfrak{q}\log (r^{-1})\rfloor}^{\infty} m\big(\Lambda_{i}\backslash \hat\Lambda_i\big)\\
& \qquad\leq
\mathfrak{q} \log (r^{-1})\; m\left( \bigcup_{q=0}^{\lfloor(v+2)\mathfrak{q} \log (r^{-1})\rfloor} \big(T^R\big)^{-q}\{R>\mathfrak{q} \log (r^{-1})\}\right).
\end{align*}
Using the $T^R$-invariance of $m$ we get
\begin{equation}\label{sinus}
\mu\big(\cour\big) \leq 
\end{equation}
$$
\sum_{i,R_{i}< \lfloor\mathfrak{q}\log (r^{-1})\rfloor}^{\infty}
\sum_{j=0}^{R_{i}-1}m\big\{T^{-j}\big(\cour\big)\cap\hat\Lambda_{i}\big\}
 +(1+v\mathfrak{q} \log (r^{-1}))^2\ \Omega^2\big(\mathfrak{q}\log(r^{-1})\big).
$$
For any $j<R_i<\mathfrak{q} \log (r^{-1})$, and $\gamma\in\legu$, we have
\begin{align*}
& \frac{m_\gamma\big\{T^{-j}\big(\cour\big)\cap\hat\Lambda_{i}\big\}}{m_\gamma\big(\hat\Lambda_{i}\big)}\\
& =
\frac{\sum_{\eta\in\mathcal{G}_{\mathfrak{q},r}} m_\gamma\big\{T^{-j}\big(\cour\big)\cap\hat\Lambda_{i}\cap \eta\big\}}
{\sum_{\eta\in\mathcal{G}_{\mathfrak{q},r}} m_\gamma\big(\hat\Lambda_{i}\cap \eta\big)}\\
& \leq
\sup_{\eta\in\mathcal{G}_{\mathfrak{q},r}, \eta\subset \gamma\cap \hat\Lambda_{i}}  
\frac{m_\gamma\big\{T^{-j}\big(\cour\big)\cap\hat\Lambda_{i}\cap \eta\big\}}
{m_\gamma\big(\hat\Lambda_{i}\cap \eta\big)}\cdot
\end{align*}
Observe that from the definition of $L_{\mathfrak{q},r}(x)$ above that for all $x\in\eta$
$$
T^{L_{\mathfrak{q},r}(x)}(\eta)=\gamma'\cap \Lambda
$$
for some $\gamma'\in\legu$. 
If $\dimu=1$ then 
$$
|\eta| \geq A^{-(v+2)\mathfrak{q}\log (r^{-1})}
$$
by (P4)(b) and \eqref{leA}.
Therefore by using Lemma \ref{geometrique} we obtain
$$
\frac{m_\gamma\big\{T^{-j}\big(\cour\big)\cap\hat\Lambda_{i}\big\}}{m_\gamma\big(\hat\Lambda_{i}\big)}
\leq
C\ r^{\delta/2}\ A^{(v+3)\mathfrak{q} \log (r^{-1})}.
$$
Using \eqref{desintm} and the previous inequality, we have 
\begin{align*}
\frac{m\big\{T^{-j}\big(\cour\big)\cap\hat\Lambda_{i}\big\}}{m\big(\hat\Lambda_{i}\big)} &=
\frac{\int_{\legu}d\nu(\gamma)\ m_\gamma\big\{T^{-j}\big(\cour\big)\cap\hat\Lambda_{i}
\big\}}{\int_{\legu} d\nu(\gamma)\ m_\gamma\big(\hat\Lambda_{i}\big)}\\
& \leq C\ r^{\delta/2}\ A^{(v+3)\mathfrak{q} \log (r^{-1})}\;.
\end{align*}
This implies, using \eqref{sinus} and \eqref{mutour}, that 
\begin{align*}
&\mu\big(\cour\big) \\
&\leq C r^{\delta/2}A^{(v+3)\mathfrak{q} \log (r^{-1})}\times \\
 & \quad\sum_{i,R_{i}< \lfloor\mathfrak{q}\log (r^{-1})\rfloor}^{\infty}
\sum_{j=0}^{R_{i}-1}
m\big(\hat\Lambda_{i}\big) 
+(1+v\mathfrak{q} \log (r^{-1}))^2 \Omega^2\big(\mathfrak{q}\log(r^{-1})\big)\\
& 
\le C r^{\delta/2} A^{(v+3)\mathfrak{q} \log (r^{-1})}\
 +(1+v\mathfrak{q} \log (r^{-1}))^2\ \Omega^2\big(\mathfrak{q}\log(r^{-1})\big).
\end{align*}
The proposition follows since $x\notin\tresmauvais_{r}$.
\end{proof}

\begin{proposition}\label{decornuh}
There exist constants $C>0$ and $\mathfrak{s}>0$ such that 
for any $r\in (0,1)$, for any $\mathfrak{a}\in(0,\frac{2}{3\log A})$, for
$\mathfrak{b}=\mathfrak{b}(\mathfrak{a})$
as in Lemma \ref{courtnonu}, and for  any $\mathfrak{p}_{0}>0$ and $\mathfrak{p}>0$, 
there exists a measurable subset $\trestresmauvais_{r}$ of the attractor $\attrac$ satisfying
\begin{align*}
& \mu(\trestresmauvais_{r}) \leq 
C\left[\Omega\big(\mathfrak{s}\log(r^{-1})/3\big)+ r^{\mathfrak{s}}+\log (r^{-1})\ r^{\frac{\dimu\mathfrak{b}}{2}}\right.\\
& \quad\qquad\;\qquad\left.+\log (r^{-1})\ \Omega\big(\mathfrak{a}
\log(r^{-1/2})\big)^{2} +r+
\Omega^2\big(\mathfrak{p}\log (r^{-1})\big)\right].
\end{align*}
such that for any $x\in \attrac\backslash\trestresmauvais_{r}$, 
we have for any integers $p$,
$\ell$ and $0\le q\le \ell$
\begin{align*}
& \left|\E\bigg(\1_{\{X_{1}=1\}}
\1_{\{S_{p+1}^{p+1+\ell}=q\}}\bigg)-\mu\big(B_{r}(x)\big) 
 \E\bigg( \1_{\{S_{p+1}^{p+1+\ell}=q\}}\bigg)\right|\le \\
&  C\mu\big(B_{r}(x)\big)
\bigg[r + r^{-n-1}(1+v\mathfrak{p}_{0} \log (r^{-1}))^2\
 \Omega^2\big(\mathfrak{p}_{0}\log(r^{-1})\big)\\
& +\ell r^{-n-1}(1+v\mathfrak{p}\log (r^{-1}))^2\
 \Omega^2\big(\mathfrak{p}\log(r^{-1})\big)\\
& +C\;r^{-3\big(n+2+(v+3)\mathfrak{p}_{0}\log A\big)}\;
\Omega^{2}\left(2\big(n+2+(v+3)\mathfrak{p}\log A\big)\;\frac{\log r}{\log\alpha}\right)\\
& +(\log (r^{-1}))^{3}\ \Omega\left(\frac{1}{3}\;\min\left\{\frac14,\;
\frac{-\log\alpha}{24\log A}\right\}\log (r^{-1})\right)\\
&+\max\left\{r^{\frac12},\alpha^{\frac{\log (r^{-1})}{12\log A}}\right\}\\
&+ 4\big(n+2+(v+3)\mathfrak{p}\log A\big)\;\frac{\log r}{\log\alpha}\;
\Big( r^{\frac{\mathfrak{r}}{2}} +r^{-3-n}\; \Omega^2\big((\log
r^{-1})^{2}\big)\Big)\bigg]\,,
\end{align*}
where $v$ is defined in \eqref{defv}.
\end{proposition}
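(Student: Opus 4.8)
The plan is to bound the covariance in the statement --- where $\1_{\{X_1=1\}}=\1_{B_r(x)}$ and $\1_{\{S_{p+1}^{p+1+\ell}=q\}}=\1_{\{\sum_{m=0}^{\ell}\1_{B_r(x)}\circ T^{p+m}=q\}}$ --- by decoupling the indicator at time $0$ from the counting function living on the window $[p+1,p+1+\ell]$, the three ingredients being the short--return estimate of Proposition \ref{nimportequoi}, the corona estimate of Proposition \ref{courons} and the decay of correlations \eqref{decor}. Since the external gap $p$ may be too short for \eqref{decor} to be useful, the first step is to insert an artificial burn--in of length $T_0:=\lceil 4(n+2+(v+3)\mathfrak{p}\log A)\log r/\log\alpha\rceil$ (which is of order $\log r^{-1}$): writing $\1_{\{S_{p+1}^{p+1+\ell}=q\}}=\1_{\{S_{p+T_0+1}^{p+1+\ell}=q\}}+E$ with $|E|\le\sum_{k=p}^{p+T_0-1}\1_{B_r(x)}\circ T^{k}$, multiplying by $\1_{B_r(x)}$ and taking expectations, the $E$--contribution is at most $\sum_{m=0}^{T_0-1}\E\big(\1_{B_r(x)}\cdot\1_{B_r(x)}\circ T^{p+m}\big)$. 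For $x\notin\mauvais_r$ (Lemma \ref{courtnonu}) the terms of index $<\mathfrak{c}\log r^{-1}$ vanish, and the others are bounded exactly as in the proof of Proposition \ref{nimportequoi}, splitting at $\ell_0=(\log r^{-1})^2$; this produces the three $R_2$--type lines of the bound and forces $\mathcal{U}_r\subset\trestresmauvais_r$. The same burn--in on the factorised side $\mu(B_r(x))\E(\1_{\{S_{p+1}^{p+1+\ell}=q\}})$ only costs $T_0\mu(B_r(x))^2$, absorbed into the $r^{\mathfrak{r}/2}$ term via $\mu(B_r(x))\le r^{\mathfrak{r}}$ off $\mathcal{J}_r$ (Lemma \ref{mesureboule}).

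It then remains to estimate $\E(\1_{B_r(x)}\cdot\1_{\{S_{p+T_0+1}^{p+1+\ell}=q\}})-\mu(B_r(x))\E(\1_{\{S_{p+T_0+1}^{p+1+\ell}=q\}})$, in which a gap of at least $T_0$ now separates time $0$ from the window. Next I would make the indicators continuous. Replace $\1_{B_r(x)}$ at time $0$ by a Lipschitz function $\psi_0$ with $\1_{B_{r-r^{\delta_0}}(x)}\le\psi_0\le\1_{B_r(x)}$ and Lipschitz constant $r^{-\delta_0}$, where $\delta_0:=2(n+2+(v+3)\mathfrak{p}_0\log A)$: the error is at most $\mu(\C_{r,\delta_0}(x))$, and Proposition \ref{courons} with $\mathfrak{q}=\mathfrak{p}_0$ --- $\delta_0$ being chosen precisely so that its first term $r^{\delta_0/2-n-1}A^{(v+3)\mathfrak{p}_0\log r^{-1}}$ equals $r$ --- gives the $r$ line and the $r^{-n-1}(1+v\mathfrak{p}_0\log r^{-1})^2\Omega^2(\mathfrak{p}_0\log r^{-1})$ line. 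Likewise, sandwich the counting function by replacing each of the (at most $\ell+1$) balls occurring in it by $B_{r\pm r^{\delta}}(x)$ with $\delta:=2(n+2+(v+3)\mathfrak{p}\log A)$: the total replacement error is at most $\sum_m\E(\1_{B_r(x)}\cdot\1_{\C_{r,\delta}(x)}\circ T^{p+T_0+1+m})\le(\ell+1)\mu(\C_{r,\delta}(x))$ by $T$--invariance, and Proposition \ref{courons} with $\mathfrak{q}=\mathfrak{p}$ yields the $\ell r^{-n-1}(1+v\mathfrak{p}\log r^{-1})^2\Omega^2(\mathfrak{p}\log r^{-1})$ line.

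For the remaining main term I would use the tower. Apply \eqref{mutour} and disintegrate over unstable leaves by \eqref{desintm}; on each leaf the sub--disk carrying $\psi_0$ is pushed forward under $T^R$ along the partition $\mathcal{G}_{\mathfrak{p}_0,r}$ until it fills a whole unstable leaf, which happens within $(v+2)\mathfrak{p}_0\log r^{-1}$ iterates on the good set $\hat\Lambda_{\mathfrak{p}_0,r}$, with a loss of $(1+v\mathfrak{p}_0\log r^{-1})^2\Omega^2(\mathfrak{p}_0\log r^{-1})$ off it, using (P4)(b)--(P5)(b) for the distortion and change of variables exactly as in the proofs of Lemma \ref{geometrique} and Proposition \ref{courons}. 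Once one sits on a full leaf and the window has been shifted to times $\gtrsim T_0$, the sandwiched counting function restricted to that leaf agrees with its restriction to any other full leaf off a set contained in finitely many preimages of a corona $\C_{r,\delta}(x)$ --- because the stable holonomy is absolutely continuous (P5)(b) and contracts forward exponentially (P3), so $\alpha^{T_0}\ll r^{\delta}$ --- which is again controlled by Proposition \ref{courons} and accounts for the parameter--$\mathfrak{p}$ $\Omega^2$ contributions. What is left is a genuine mixing statement between $\psi_0$ (Lipschitz, norm $\lesssim r^{-\delta_0}$) and a Lipschitz mollification of the remaining bounded--complexity head of the counting function (norm $\lesssim r^{-\delta}$), separated by a gap of order $T_0$: \eqref{decor} with $C(p)=\Oun\Omega(p)^2$, together with $\mu(B_r(x))\ge r^{n+1}$ on $\attrac\setminus\tresmauvais_r$ and Lemma \ref{mesbou} to renormalise, contributes the $r^{-3(n+2+(v+3)\mathfrak{p}_0\log A)}\Omega^2(2(n+2+(v+3)\mathfrak{p}\log A)\log r/\log\alpha)$ line.

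Finally, the exceptional set is $\trestresmauvais_r:=\tresmauvais_r\cup\mathcal{U}_r\cup\mathscr{C}'_r$, where $\mathscr{C}'_r$ gathers the Besicovitch--type exceptional sets (Corollary \ref{besico2mu}, Lemmas \ref{besicodeux} and \ref{besico1}) needed to turn the leafwise and corona estimates above into pointwise ones; summing measures with $\mu(\tresmauvais_r)\le Cr$ (Proposition \ref{courons}), the bound on $\mu(\mathcal{U}_r)$ (Proposition \ref{nimportequoi}) and $\mu(\mathscr{C}'_r)\le C(\Omega(\mathfrak{p}\log r^{-1})+\Omega^2(\mathfrak{p}\log r^{-1}))$ gives the stated estimate. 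The step I expect to be the main obstacle is the one in the third paragraph: the counting function depends on a window whose length $\ell$ can be as large as $\sim 1/\mu(B_r(x))$, so it cannot be globally mollified with a Lipschitz constant admissible for \eqref{decor}; the way around it is to pass it through the tower so that the only distortions incurred carry an $A^{\mathfrak{p}\log r^{-1}}$ blow--up, killed by the choice $\delta=2(n+2+(v+3)\mathfrak{p}\log A)$, leaving only a bounded--complexity comparison to which \eqref{decor} and the corona estimate apply.
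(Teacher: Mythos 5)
Your first two paragraphs track the paper's proof closely: the Lipschitz replacement $\phi_{x,r}$ of the time-zero indicator with Lipschitz constant $r^{-\delta_0}$ and $\delta_0=2\big(n+2+(v+3)\mathfrak{p}_0\log A\big)$ calibrated against Proposition \ref{courons} is exactly what the paper does, and your burn-in of length $T_0\sim p_*$ with error controlled by short returns and $R_2$-type sums (Proposition \ref{nimportequoi}, Lemma \ref{courtnonu}) is the paper's treatment of the case $p\le p_*$, done by truncating the window instead of shifting it — a cosmetic difference. The genuine gap is in your third paragraph, i.e.\ precisely at the step you yourself flag as the main obstacle. The paper does \emph{not} mollify the counting observable at all. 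It replaces $\1_{\{S_{p'}^{p'+1+\ell}=q\}}$ by $\psi=\1_{\{S_{p'}^{p'+1+\ell}=q\}}\,\1_{\mathscr{Y}^{c}_{p',\ell}(x,r)}$, where $\mathscr{Y}_{p',\ell}(x,r)=\bigcup_{k=p'}^{p'+\ell}\mathscr{V}_k(x,r)$ is the union of the tower pieces whose images straddle $\partial B_r(x)$ at the relevant times (Lemma \ref{lev}); off this set the event is decided uniformly on local stable leaves, so $\psi$ is an $L^\infty$ function \emph{constant on stable manifolds}, and the measure of $\mathscr{Y}_{p',\ell}$ is a sum of $\ell+1$ corona measures handled by Proposition \ref{courons} with $\mathfrak{q}=\mathfrak{p}$ (this is where the $\ell$-term comes from). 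The decisive ingredient is then that Young's decay-of-correlations argument applies to such observables with the H\"older norm replaced by the $L^\infty$ norm (Approximation \#1 in \cite{young1} is not needed), which yields the bound $C\,r^{-\delta_0}\,\Omega^{2}(p/2)$ and, after dividing by $\mu(B_r(x))\ge r^{n+1}$ off $\tresmauvais_r$, the $r^{-3(n+2+(v+3)\mathfrak{p}_0\log A)}\Omega^{2}(\cdot)$ line.

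Your proposal has no substitute for this mechanism. You correctly observe that $\1_{\{S_{p+1}^{p+1+\ell}=q\}}$ cannot be globally mollified with a Lipschitz constant admissible for \eqref{decor}, but the escape you offer — pushing the time-zero ball through the tower along $\mathcal{G}_{\mathfrak{p}_0,r}$ and then applying \eqref{decor} to ``a Lipschitz mollification of the remaining bounded-complexity head of the counting function (norm $\lesssim r^{-\delta}$)'' — is never constructed, and cannot be as described: the event $\{S_{p+1}^{p+1+\ell}=q\}$ has discontinuities along all the preimages $T^{-k}\partial B_r(x)$ for the $\ell+1$ times $k$ in the window (with $\ell$ as large as $t/\mu(B_r(x))\gtrsim r^{-(n+1)}$), so there is no ``bounded-complexity head'' to mollify, and transporting the \emph{time-zero} factor through the tower does nothing to reduce the complexity of the \emph{window} factor. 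Your remark that, on full unstable leaves, the counting function agrees across leaves up to corona-type sets via (P3) and (P5)(b) is indeed the germ of the right idea (constancy along stable leaves up to the sets $\mathscr{V}_k$), but to conclude you must either prove the extension of \eqref{decor} to $L^\infty$ observables constant on stable manifolds, or redo Young's coupling/quotient argument by hand; as written, you invoke \eqref{decor} for a class of functions to which it does not apply, and this is a genuine gap, not a routine verification.
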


\begin{proof}
Let $\tresmauvais_{r}$ be as in Proposition \ref{courons}. 
From now on we assume that $x\in\attrac\backslash \tresmauvais_{r}$.\newline
Let $\delta_{0}>1$. Define the function $\phi_{x,r}$ by
$$
\phi_{x,r}(y)=\1_{B_{r-r^{\delta_{0}}}(x)}(y)+\frac{r-d(x,y)}{r^{\delta_{0}}}\left(
\1_{B_{r}(x)}(y)-\1_{B_{r-r^{\delta_{0}}}(x)}(y)\right)\;.
$$
It is left to the reader to verify that this function is Lipschitz with
a Lipschitz constant  $r^{-\delta_{0}}$ (uniform in $x$). 
It follows easily using
Proposition \ref{courons}  with $\delta=\delta_{0}$ to be chosen later on
and $\mathfrak{q}=\mathfrak{p}_{0}$, that 
\begin{align*}
0\le & \E\Big(\1_{\{X_{1}=1\}}\1_{\{S_{p+1}^{p+1+\ell}=q\}}\Big)
-\E\Big(\phi_{x,r}\1_{\{S_{p+1}^{p+1+\ell}=q\}}\Big)\\
& \quad \le \E\bigg(\1_{B_{r}(x)}
\1_{\{S_{p+1}^{p+1+\ell}=q\}}\bigg)-\E\bigg(\1_{B_{r-r^{\delta_{0}}}(x)}
\1_{\{S_{p+1}^{p+1+\ell}=q\}}\bigg) \\
& \quad \le \E\bigg(\left(\1_{B_{r}(x)}
-\1_{B_{r-r^{\delta_{0}}}(x)}\right)
\1_{\{S_{p+1}^{p+1+\ell}=q\}}\bigg) \\
& \quad \le \mu\big(B_{r}(x)\big)-\mu\big(B_{r-r^{\delta_{0}}}(x)\big)\\
&\quad = \mu\big(\mathscr{C}_{r,\delta_{0}}\big) \\
& \quad \le C\ \left[r^{\frac{\delta_{0}}{2}-n-1}\ A^{(v+3)\mathfrak{p}_{0} \log (r^{-1})}+ \right. \\ 
& \qquad\qquad \left. r^{-n-1}(1+v\mathfrak{p}_{0} \log (r^{-1}))^2\
\Omega^2\big(\mathfrak{p}_{0}\log(r^{-1})\big)\right]\mu\big(B_{r}(x)\big).
\end{align*}
We now estimate the term $\E\Big(\phi_{x,r}\1_{\{S_{p+1}^{p+1+\ell}=q\}}\Big)$
using the decay of correlations. Let $p'=[p/2]$, and let (see Lemma
\ref{lev})
$$
\mathscr{Y}_{p',\ell}(x,r)=\bigcup_{k=p'}^{p'+\ell}\mathscr{V}_{k}(x,r)\;.
$$
From the definition of the sets $\mathscr{V}_{k}(x,r)$, the function 
$$
\psi=\1_{\{S_{p'}^{p'+1+\ell}=q\}}\;\1_{\mathscr{Y}^{c}_{p',\ell}(x,r)}
$$
is $L^{\infty}$ and constant on stable manifolds. We would like
to use the decay of correlations proved in \cite{young1,young2}.
Unfortunately, the function $\psi$ is not H\"older continuous.
However, it is known that for $\psi$ constant on local stable
manifolds, the proof works as well and leads to an estimate
where the H\"older norm of $\psi$ is replaced by its $L^\infty$
norm. This follows easily from the observation that, in this
case, Approximation \#1 in  \cite[Section 4.1]{young1} is
not necessary. The rest of the proof is identical.
This yields the estimate
\begin{multline*}
\left|
\E\bigg(\phi_{x,r}\,\left(\1_{\{S_{p'}^{p'+1+\ell}=q\}}\;
\1_{\mathscr{Y}^{c}_{p',\ell}(x,r)}\right)\circ T^{p+1-p'}\bigg) \right. \\
\left. -\E\big(\phi_{x,r}\big)\E\bigg(\1_{\{S_{p'}^{p'+1+\ell}=q\}}\;
\1_{\mathscr{Y}^{c}_{p',\ell}(x,r)}\bigg)\right|
\end{multline*}
$$
\le 
C\;r^{-\delta_{0}}\;\Omega^{2}(p/2)\;.
$$
From Lemma \ref{lev}, we have
$$
\E\left[\phi_{x,r}\left(\1_{\{S_{p'+1}^{p'+1+\ell}=q\}}
 \1_{\mathscr{Y}_{p',\ell}(x,r)}\right)\circ T^{p-p'+1} \right]
\le \sum_{k=p'}^{p'+\ell}\mu\big(\mathscr{V}_{k}(x,r)\big)\le 
$$
$$
  \le \sum_{k=p'}^{p'+\ell}\;
\mu\big(\tilde{\mathscr{C}}_{r,k\log\alpha/\log r}\big)\;.
$$
If $\alpha^{p'}<r/2$, we have by using Proposition \ref{courons}
with $\mathfrak{q}=\mathfrak{p}$ and suitable $\delta$'s,
\begin{align*}
& \left|
\E\bigg(\1_{\{S_{p'}^{p'+1+\ell}=q\}}\;
\1_{\mathscr{Y}^{c}_{p',\ell}(x,r)}\bigg)
-\E\bigg(\1_{\{S_{p'}^{p'+1+\ell}=q\}}\bigg)
\right|\\
& \le \mu\big(\mathscr{Y}_{p',\ell}(x,r)\big)\\
& \le \sum_{k=p'}^{p'+\ell}\;\mu\big(\tilde{\mathscr{C}}_{r,k\log\alpha/\log r}\big)\\
& \le \sum_{k=p'}^{p'+\ell}\;\left(
\mu\big(\mathscr{C}_{r+\alpha^{k},k\log\alpha/\log(r+\alpha^{k})}\big)
+
\,\mu\big(\mathscr{C}_{r,k\log\alpha/\log r}\big)\right)\\
& \le C \left[r^{-n-1}\;\alpha^{p'/2}\ A^{(v+3)\mathfrak{p} \log (r^{-1})} \right. \\
&  \quad\qquad \left. +\ell r^{-n-1}(1+v\mathfrak{p} \log (r^{-1}))^2\
\Omega^2\big(\mathfrak{p}\log(r^{-1})\big)\right]\mu\big(B_{r}(x)\big).
\end{align*}
Using again Proposition \ref{courons} with
$\mathfrak{q}=\mathfrak{p}_{0}$ and
$$
\delta=\delta_{0}=2\big(n+2+(v+3)\mathfrak{p}_{0}\log A\big),
$$ 
we get the estimate  
\begin{align*}
0 & \le  \mu\big(B_{r}(x)\big)-\int \phi_{x,r}\;\d\mu \\
& \le \mu\big(B_{r}(x)\big)-\mu\big(B_{r-r^{\delta_{0}}}(x)\big)\\
& = \mu\big(\mathscr{C}_{r,\delta_{0}}\big)\\
& \leq C\ \left[r+r^{-n-1}(1+v\mathfrak{p}_{0} \log (r^{-1}))^2\ \Omega^2\big(\mathfrak{p}_{0}\log(r^{-1})\big)\right]\mu\big(B_{r}(x)\big)\;.
\end{align*}
If 
$$
p>p_{*}=
4\big(n+2+(v+3)\mathfrak{p}\log A\big)\;\frac{\log r}{\log\alpha}\;,
$$
we conclude that 
\begin{align*}
& \left|\E\bigg(\1_{\{X_{1}=1\}}
\1_{\{S_{p+1}^{p+1+\ell}=q\}}\bigg)-\mu\big(B_{r}(x)\big) 
 \E\bigg( \1_{\{S_{p+1}^{p+1+\ell}=q\}}\bigg)\right|\leq \\
&
C \bigg[r + r^{-n-1}(1+v\mathfrak{p}_{0} \log (r^{-1}))^2\ \Omega^2\big(\mathfrak{p}_{0}\log(r^{-1})\big)\\
&
\quad +\ell r^{-n-1}(1+v\mathfrak{p}\log (r^{-1}))^2\ \Omega^2\big(\mathfrak{p}\log(r^{-1})\big)\\
&
\quad +C\;r^{-3\big(n+2+(v+3)\mathfrak{p}_{0}\log A\big)}\;
\Omega^{2}\Big(2\big(n+2+(v+3)\mathfrak{p}\log A\big)\;\frac{\log r}{\log\alpha}\Big)
\bigg] \ \mu\big(B_{r}(x)\big)\;.
\end{align*}
The proposition follows in the case $p>p_{*}$. 

\noindent We now consider the case
$p\le p_{*}$. 
We can write
\begin{align*}
& \E\bigg(\1_{\{X_{1}=1\}}
\1_{\{S_{p+1}^{p+1+\ell}=q\}}\bigg)\\
& \quad =
\E\bigg(\1_{\{X_{1}=1\}}
\1_{\{S_{p+1}^{p+1+\ell}=q\}}\prod_{j=p}^{p_{*}}\1_{\{X_{j}=0\}}\bigg)\\
&
\qquad +\E\left(\1_{\{X_{1}=1\}}
\1_{\{S_{p+1}^{p+1+\ell}=q\}}\left(1-\prod_{j=p}^{p_{*}}\1_{\{X_{j}=0\}}\right)\right)\\
&
\quad=\E\left(\1_{\{X_{1}=1\}}
\1_{\{S_{p_{*}+1}^{p_{*}+1+\ell}=q\}}\prod_{j=p}^{p_{*}}\1_{\{X_{j}=0\}}\right)\\
&
\qquad+\E\left(\1_{\{X_{1}=1\}}
\1_{\{S_{p+1}^{p+1+\ell}=q\}}\left(1-\prod_{j=p}^{p_{*}}\1_{\{X_{j}=0\}}\right)\right)\\
&
\quad =\E\bigg(\1_{\{X_{1}=1\}}
\1_{\{S_{p_{*}+1}^{p_{*}+1+\ell}=q\}}\bigg)\\
&
\qquad -\E\left(\1_{\{X_{1}=1\}}
\1_{\{S_{p_{*}+1}^{p_{*}+1+\ell}=q\}}
\left(1-\prod_{j=p}^{p_{*}}\1_{\{X_{j}=0\}}\right)\right)\\
&\qquad +\E\left(\1_{\{X_{1}=1\}}
\1_{\{S_{p+1}^{p+1+\ell}=q\}}
\left(1-\prod_{j=p}^{p_{*}}\1_{\{X_{j}=0\}}\right)\right)\;.
\end{align*}
Therefore, using the invariance of the measure $\mu$ and the inequality
$$
1-\prod_{j=p}^{p_{*}}\1_{\{X_{j}=0\}} \leq \sum_{j=p}^{p_{*}}\1_{\{X_{j}=1\}}\,,
$$
we obtain
\begin{align*}
&\left|\E\bigg(\1_{\{X_{1}=1\}}
\1_{\{S_{p+1}^{p+1+\ell}=q\}}\bigg)-\mu\big(B_{r}(x)\big) \
 \E\bigg( \1_{\{S_{p+1}^{p+1+\ell}=q\}}\bigg)\right|\\
&
\qquad \le \left|\E\bigg(\1_{\{X_{1}=1\}}
\1_{\{S_{p_{*}+1}^{p_{*}+1+\ell}=q\}}\bigg)-\mu\big(B_{r}(x)\big) \
 \E\bigg( \1_{\{S_{p_{*}+1}^{p_{*}+1+\ell}=q\}}\bigg)\right|\\
&
\qquad\qquad +\ 2\sum_{j=p}^{p_{*}} \E\bigg(\1_{\{X_{1}=1\}}
\1_{\{X_{j}=1\}}\bigg)\,.
\end{align*}
The first term is estimated as before, and the second term is bounded by
 $R_{2}\big(\mu\big(B_{r}(x)\big),p_{*}\big)$, which is estimated using
 proposition \ref{nimportequoi} for $x\in\attrac\backslash
 \mathcal{U}_{r}$. The proposition follows if we take
$$
\trestresmauvais_{r}=\tresmauvais_{r}\cup\mathcal{U}_{r}\;. 
$$ 
\end{proof}

%%%%%%%% SUBSECTION
\subsection{End of proof }

\begin{proposition}
There exist constants $C>0$ and $\mathfrak{s}>0$ such that 
for any $r\in (0,\frac12[$, for any $\mathfrak{a}\in(0,\frac{2}{3\log A})$, for
  $\mathfrak{b}=\mathfrak{b}(\mathfrak{a})$
as in Lemma \ref{courtnonu}, and for  any $\mathfrak{p}_{0}>0$ and $\mathfrak{p}>0$, 
there exists a measurable subset $\trestresmauvais_{r}$ of the attractor $\attrac$
containing $\tresmauvais_{r}$ satisfying
$$
\mu(\trestresmauvais_{r}) \leq 
$$
$$
C\left[\Omega(\mathfrak{s}\log(r^{-1})/3)+ r^{\mathfrak{s}}+
\log (r^{-1})\ r^{\frac{\dimu\,\mathfrak{b}}{2}} \right. 
$$
$$
\qquad\qquad \qquad \qquad \left. +\log (r^{-1})\ 
\Omega^{2}\big(\mathfrak{a}\log(r^{-1/2})\big) +r+
\Omega^2(\mathfrak{p}\log (r^{-1}))\right].
$$
such that for any $x\in \attrac\backslash\trestresmauvais_{r}$, 
we have for any integers $p$, $N$ and $M$, 
the  error term in Theorem \ref{abstract} is bounded by
\begin{align*}
& R\big(\mu\big(B_{r}(x)\big),N,p,M\big) \le C\;\bigg[
NM\;\mu\big(B_{r}(x)\big)\times\\
& 
\bigg(
r + r^{-n-1}(1+v\mathfrak{p}_{0} \log (r^{-1}))^2\
 \Omega^2\big(\mathfrak{p}_{0}\log(r^{-1})\big)\\
&
\quad +N\, r^{-n-1}(1+v\mathfrak{p}\log (r^{-1}))^2\
\Omega^2\big(\mathfrak{p}\log(r^{-1})\big)\\
&
\quad+C\;r^{-3\big(n+2+(v+3)\mathfrak{p}_{0}\log A\big)}\;
\Omega^{2}\Big(2\big(n+2+(v+3)\mathfrak{p}\log A\big)\;\frac{\log r}{\log\alpha}\Big)\\
&
\quad+(\log (r^{-1}))^{3}\ \Omega\left(\frac{1}{3}\;\min\left\{\frac14,\;
\frac{-\log\alpha}{24\log A}\right\}\log (r^{-1})\right)
\\
&
\quad +\max\left\{r^{\frac12},\alpha^{\frac{\log (r^{-1})}{12\log A}}\right\}\\
&\quad + 4\big(p+n+2+(v+3)\mathfrak{p}\log A\big)\;\frac{\log r}{\log\alpha}\; \big( r^{\frac{\mathfrak{r}}{2}}
+r^{-3-n}\; \Omega^2\big((\log (r^{-1}))^{2}\big)\big)
\bigg)\\
&
+M\,p\, \mu\big(B_{r}(x)\big)\,\big(1+N\mu\big(B_{r}(x)\big)\big)\\
& + 
\frac{\big(\mu\big(B_{r}(x)\big) N\big)^{M}}{M!}\ e^{-\mu\big(B_{r}(x)\big) N}
+ N\mu\big(B_{r}(x)\big)^{2}
\bigg],
\end{align*}
for any $M<N$ and $p<N$. 
\end{proposition}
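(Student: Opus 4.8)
The plan is to substitute $\epsilon=\mu\big(B_{r}(x)\big)$ into the formula
$$
R\big(\mu(B_{r}(x)),N,p,M\big)=2NM\big[R_{1}(\mu(B_{r}(x)),N,p)+R_{2}(\mu(B_{r}(x)),p)\big]+R_{3}(\mu(B_{r}(x)),N,p,M)
$$
of Theorem \ref{abstract} and to bound the three blocks separately. The term $R_{3}$ is already fully explicit in $\mu(B_{r}(x))$, $N$, $p$ and $M$: up to the absolute factor $4$ it contributes precisely the summands $Mp\,\mu(B_{r}(x))\big(1+N\mu(B_{r}(x))\big)$, $\tfrac{(\mu(B_{r}(x))N)^{M}}{M!}\,e^{-\mu(B_{r}(x))N}$ and $N\mu(B_{r}(x))^{2}$ of the announced bound, so no work is needed there. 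As the set of bad centers I would take for $\trestresmauvais_{r}$ exactly the set furnished by Proposition \ref{decornuh}; recall that it contains both the set $\tresmauvais_{r}$ of Proposition \ref{courons} and the set $\mathcal{U}_{r}$ of Proposition \ref{nimportequoi}, and that its $\mu$-measure obeys the stated bound.

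First I would control $R_{1}$. Since $S_{p+1}^{N-j}=X_{p+1}+\cdots+X_{N-j}$ coincides with $S_{p+1}^{p+1+\ell}$ for $\ell=N-j-p-1\le N$, and $0\le q\le N-j-p$ forces $0\le q\le\ell+1$ (at worst splitting off one extra index), every term in the supremum defining $R_{1}(\mu(B_{r}(x)),N,p)$ has the shape estimated in Proposition \ref{decornuh}, which I would apply with $\mathfrak{q}=\mathfrak{p}_{0}$ and $\mathfrak{q}=\mathfrak{p}$ in the two invocations of Proposition \ref{courons} that it uses. Because the bound of Proposition \ref{decornuh} is uniform in $p$ and at most linear in $\ell$, and $\ell\le N$, one gets for $x\in\attrac\backslash\trestresmauvais_{r}$ that $R_{1}(\mu(B_{r}(x)),N,p)$ is dominated by the right-hand side of Proposition \ref{decornuh} with $\ell$ replaced by $N$. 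Multiplying by $2NM$ produces the block $NM\,\mu(B_{r}(x))\times(\cdots)$ of the claimed estimate, the $\ell$-linear contribution becoming the summand $N\,r^{-n-1}\big(1+v\mathfrak{p}\log(r^{-1})\big)^{2}\,\Omega^{2}\big(\mathfrak{p}\log(r^{-1})\big)$ inside the parentheses.

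For $R_{2}$, Proposition \ref{nimportequoi} gives, for $x\in\attrac\backslash\mathcal{U}_{r}$ (hence for $x\in\attrac\backslash\trestresmauvais_{r}$) and every $p\ge2$,
$$
R_{2}\big(\mu(B_{r}(x)),p\big)\le C\mu(B_{r}(x))\Big[(\log(r^{-1}))^{3}\,\Omega\big(\tfrac13\min\{\tfrac14,\tfrac{-\mathfrak{c}\log\alpha}{4}\}\log(r^{-1})\big)+\max\{r^{1/2},\alpha^{\frac{\mathfrak{c}}{2}\log(r^{-1})}\}+p\big(r^{\mathfrak{r}/2}+r^{-3-n}\Omega^{2}((\log r^{-1})^{2})\big)\Big].
$$
Multiplying by $2NM$ and merging the $p\,(\cdots)$ term with the $p_{*}$-type contribution that already came out of $R_{1}$ gives the summand $4\big(p+n+2+(v+3)\mathfrak{p}\log A\big)\tfrac{\log r}{\log\alpha}\big(r^{\mathfrak{r}/2}+r^{-3-n}\Omega^{2}((\log r^{-1})^{2})\big)$. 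Collecting the bounds for $2NMR_{1}$, $2NMR_{2}$ and $R_{3}$, using that $\Omega$ is decreasing to coalesce the various arguments of $\Omega$ into $\tfrac13\min\{\tfrac14,-\tfrac{\log\alpha}{24\log A}\}\log(r^{-1})$ exactly as in the proof of Proposition \ref{nimportequoi}, and absorbing logarithmic and numerical factors into $C$, yields the stated inequality. I expect the main obstacle to be pure bookkeeping: keeping track of which of the nested ``bad'' sets ($\tresmauvais_{r}$, $\mathcal{U}_{r}$, and the sets $\mathscr{C}_{\omega_{1}}$, $\mathscr{C}_{\omega_{2}}$, $\mathscr{E}_{r,\mathfrak{s}}$, $\mathcal{J}_{r}$, $\mathcal{A}_{r}$ hidden within them) is invoked at each step, checking that their union coincides with $\trestresmauvais_{r}$ and has the advertised measure, and verifying that the $\ell$- and $p$-dependencies really stay at most linear so that they can be pushed into the $N$- and $p$-prefactors of Theorem \ref{abstract}.
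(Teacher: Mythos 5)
Your proposal is correct and follows essentially the same route as the paper, whose proof of this proposition is simply the observation that it "follows at once from Propositions \ref{nimportequoi} and \ref{decornuh} with $\ell\le N$": you bound $R_1$ by Proposition \ref{decornuh}, $R_2$ by Proposition \ref{nimportequoi}, take $R_3$ as explicit, and use the bad set $\trestresmauvais_r$ already built in Proposition \ref{decornuh}. The bookkeeping you flag (nested bad sets, linearity in $\ell$ and $p$) is exactly what the paper's propositions have already packaged, so nothing further is needed.
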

\begin{proof}
This result follows at once from Propositions \ref{nimportequoi} and 
\ref{decornuh}  with $\ell \le N$.
\end{proof}

We now finish the proof of Theorem \ref{nuhpoisson}.  Let  $x\in
\attrac\backslash\trestresmauvais_{r}$. 
We  choose for a fixed real number $t>0$
$$
N=[t/\mu\big(B_{r}(x)\big)]\;.
$$
Since $x\notin \trestresmauvais_{r}$, we have
$\mu\big(B_{r}(x)\big)>r^{n+1}$. 
We  choose $p=\Oun\log(r^{-1})$ and $M=1+[\log(r^{-1})]$.
If there are two constants $C>0$ and $\theta>0$ such that for any $s>0$
$$
\Omega(s)\le C\, e^{-\theta s}\;,
$$
it follows that 
\begin{align*}
& R\big(\mu\big(B_{r}(x)\big),N,p,M\big)\le \\
& C \bigg[\big(1+[\log(r^{-1})]\big)
\; \bigg(r + r^{-n-1}(1+v\mathfrak{p}_{0} \log (r^{-1}))^2\ \exp{\big(-2\theta\mathfrak{p}_{0}\log(r^{-1})}\big)\\
&
\quad + r^{-2n-21}(1+v\mathfrak{p}\log (r^{-1}))^2\ \exp{\big(-2\theta \mathfrak{p}\log(r^{-1})}\big)\\
&
\quad +r^{-3\big(n+2+(v+3)\mathfrak{p}_{0}\log A\big)}\;\exp{\Big(-4\theta \big(n+2+(v+3)\mathfrak{p}\log
 A\big)\;\frac{\log r}{\log\alpha}\Big)} \\
&
\quad +(\log (r^{-1}))^{3}\ \exp{ \left(-\theta\frac{1}{3}\;\min\left\{\frac14,\; \frac{-\log\alpha}{24\log A}\right\}\log (r^{-1})\right)}\\
&
\quad +\max\left\{r^{\frac12},\alpha^{\frac{\log (r^{-1})}{12\log A}}\right\}
+ 4\big(\log(r^{-1})+n+2\\
&\quad+(v+3)\mathfrak{p}\log A\big)\;\frac{\log r}{\log\alpha}\;
\big( r^{\frac{\mathfrak{r}}{2}} +r^{-3-n}\; \exp\big(-2\theta(\log (r^{-1}))^{2}\big)
\bigg)\\
&
\quad+\big(1+[\log(r^{-1})]\big)^{2}\mu\big(B_{r}(x)\big)\\
& \quad+
\frac{t^{1+[\log(r^{-1})]}}{\big(1+[\log(r^{-1})]\big)!} 
+ \mu\big(B_{r}(x)\big)
\bigg]\;.
\end{align*}
We now take $\mathfrak{p}_{0}$ large enough so that for any $r\in(0,1/2)$
$$
r^{-n-1}(1+v\mathfrak{p}_{0} \log (r^{-1}))^2\
 \exp{\big(-2\theta\mathfrak{p}_{0}\log(r^{-1})}\big)\le r\;.
$$
We then choose $\mathfrak{p}$ large enough so that for any $r\in(0,1/2)$
$$
r^{-2n-21}(1+v\mathfrak{p}\log (r^{-1}))^2\ \exp{\big(-2\theta \mathfrak{p}\log(r^{-1})}\big)
$$
$$
+r^{-3\big(n+2+(v+3)\mathfrak{p}_{0}\log A\big)}\;\exp{\big(-4\theta \big(n+2+(v+3)\mathfrak{p}\log A\big)\;\frac{\log r}{\log\alpha}\big)} \le r\;.
$$
We obtain
$$
R\big(\mu\big(B_{r}(x)\big),N,p,M\big)\le C \;r^{a}
$$
for some constant $a>0$. 
Similarly, choosing $\mathfrak{a}=1/(3\log A)$ there exists a constant
$b>0$ such that
$$
\mu(\trestresmauvais_{r}) \leq C\;r^{b}\;.
$$
Theorem \ref{nuhpoisson} now follows from Theorem \ref{abstract}.

%%%%%%% APPENDIX
\appendix

%%%%% SECTION
\section{Some consequences of Besicovitch covering Lemma}

We state and prove a few lemmas which result from a version of Besicovitch's covering Lemma
valid on Riemannian manifolds \cite[Section 2.8]{federer}. Some of these lemmas
may be useful in more general contexts.

%%%%%%%%%%%%%%%%%%%%%%%%%%%%%%%%%%%%%%
\begin{lemma}\label{mesbou}
Let $\mu$ be a probability measure with compact support in
a $n$-dimensional Riemannian manifold $M$. Then, for any $\mathfrak{g}>0$, there exists a constant $C>0$ such that
for any $r\in]0,1]$ 
$$
\mu\left(\left\{x\,\big|\, 0<\mu\big(B_{r}(x)\big)\le
r^{n+\mathfrak{g}}\right\}\right)\le C r^{\mathfrak{g}}\;.
$$ 
\end{lemma}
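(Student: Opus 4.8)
The plan is to cover the set
$E_{r}:=\big\{x\,\big|\,0<\mu(B_{r}(x))\le r^{n+\mathfrak{g}}\big\}$
by the balls $B_{r}(x)$, $x\in E_{r}$, to thin this cover down to a subfamily of bounded multiplicity using Besicovitch's covering lemma, and then to count the surviving balls by a volume argument. In short, the lemma is a packing estimate dressed up in measure-theoretic language.

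First I would reduce to small radii. Since $\mu$ has compact support $K$, we have $E_{r}\subset K$, and by compactness there are a relatively compact open set $U\supset K$ and an $r_{0}\in(0,1]$ such that $B_{r}(x)\subset U$ for all $x\in K$ and all $r\le r_{0}$. Working in a finite atlas covering $\overline{U}$ and comparing the Riemannian volume $\leb$ with Lebesgue measure in charts, one obtains constants $c_{0}>0$ and $V_{0}<\infty$ (depending only on $M$ and $K$) with $\leb(U)\le V_{0}$ and $\leb(B_{r}(y))\ge c_{0}r^{n}$ for every $y\in U$ and every $r\le r_{0}$. For $r\in[r_{0},1]$ the asserted bound is trivial since $\mu(E_{r})\le 1\le r_{0}^{-\mathfrak{g}}r^{\mathfrak{g}}$, so from now on assume $r\le r_{0}$.

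Next, apply the version of Besicovitch's covering lemma valid on a Riemannian manifold (\cite[Section 2.8]{federer}) to the family $\{B_{r}(x)\}_{x\in E_{r}}$: it yields a countable subfamily $\{B_{r}(x_{i})\}_{i}$ with $x_{i}\in E_{r}$ that still covers $E_{r}$ and has multiplicity bounded by a constant $N$ independent of $r$, i.e.\ $\sum_{i}\1_{B_{r}(x_{i})}\le N$ pointwise. All these balls lie in $U$; integrating the last inequality over $U$ and using the volume bounds gives
$$
c_{0}\,r^{n}\,\card\{i\}\le\sum_{i}\leb\big(B_{r}(x_{i})\big)\le N\,\leb(U)\le N V_{0},
$$
whence $\card\{i\}\le C_{1}\,r^{-n}$ with $C_{1}:=N V_{0}/c_{0}$. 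Finally, since each $x_{i}\in E_{r}$, countable subadditivity of $\mu$ (equivalently of the outer measure $\mu^{*}$, so that no measurability of $E_{r}$ is needed) gives
$$
\mu(E_{r})\le\sum_{i}\mu\big(B_{r}(x_{i})\big)\le\card\{i\}\,r^{n+\mathfrak{g}}\le C_{1}\,r^{\mathfrak{g}},
$$
which is the claim with $C=\max\{C_{1},\,r_{0}^{-\mathfrak{g}}\}$. The only steps needing (routine) care are invoking Besicovitch's lemma in the Riemannian setting and the local volume comparison $\leb(B_{r}(y))\asymp r^{n}$ near the support, so I anticipate no genuine obstacle.
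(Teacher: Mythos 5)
Your proof is correct and takes essentially the same route as the paper: cover the bad set by the balls $B_r(x)$, thin via Besicovitch (the paper uses the disjoint-subfamilies form, you use the bounded-multiplicity form — interchangeable), bound the number of surviving balls by a volume/packing count of order $r^{-n}$ using compact support, and multiply by the bound $r^{n+\mathfrak{g}}$ on each ball's measure. The only (harmless) slip is the claim $E_r\subset K$: a point outside the support can still have $\mu(B_r(x))>0$, but $E_r$ lies in the $r$-neighborhood of $K$, so choosing $U$ to contain, say, the $2$-neighborhood of $K$ repairs this with no other change.
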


\begin{proof}
Let 
$$
\mathcal{F}_{r}= \left\{x\,\big|\, \mu\big(B_{r}(x)\big)\le r^{n+\mathfrak{g}}\right\}\;.
$$
The family of balls $\mathcal{C}=\{B_{r}(x)\,:\,x\in \mathcal{F}_{r}\} $
is obviously a covering of $\mathcal{F}_{r}$. Therefore, by Besicovitch's covering
Lemma,  there is a finite number $p(n)$ and $q$ collections of balls belonging to  $\mathcal{C}$,
denoted by $\mathcal{H}_{1},\ldots,\mathcal{H}_{q}$, with $q\le p(n)$,
such that in each collection $\mathcal{H}_{i}$ the balls are pairwise
disjoint, and the collection of all the balls in all the $\mathcal{H}_{i}$
($1\le i\le q$) cover $\mathcal{F}_{r}$.
We have
\begin{eqnarray*}
\mu\big(\mathcal{F}_{r}\big) & \le & 
\sum_{i=1}^{q}\sum_{B\in\mathcal{H}_{i},\,\mu(B)>0} \mu(B)
\\
& \le & \sum_{i=1}^{q}\sum_{B\in\mathcal{H}_{i},\,\mu(B)>0} r^{n+\mathfrak{g}}. 
\end{eqnarray*}
Since $\mu$ has compact support, there is a number $R_0>0$ such that
$$
\bigcup_{i=1}^{q}\bigcup_{B\in
  \mathcal{H}_{i},\,\mu(B)>0}B\subset B_{R_0}(0)\;.
$$
Therefore, since the balls in each $\mathcal{H}_{i}$ are disjoint, there is a constant $C'$ such that for any $1\le i\le q$ we
have
$$
\card\big(\big\{B\in \mathcal{H}_{i}\,\big|\,\mu(B)>0\big\}\big)\le
C'\,r^{-n}\;. 
$$
This implies
$$
\mu\big(\mathcal{F}_{r}\big)\le p(n)\,C'\,r^{\mathfrak{g}}\;.
$$
\end{proof}

\begin{lemma}\label{besico1}
Let $\mu$ be a Borel probability measure on a $n$-dimensional Riemannian manifold $M$. 
For $r>0$ and $\mathfrak{s}>0$ define
$$
\mathscr{E}_{r,\mathfrak{s}}=\big\{x\,\big|\, \mu\big(B_{2r}(x)\big)> r^{-\mathfrak{s}}
\mu\big(B_{r}(x)\big) \big\}\;. 
$$
There is a constant $C>0$ independent of $r$ and $\mathfrak{s}$ (it depends
only on $n$) such that
$$
\mu\big(\mathscr{E}_{r,\mathfrak{s}}\big)\le C\,r^{\mathfrak{s}}\;.
$$
\end{lemma}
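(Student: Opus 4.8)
The plan is to follow the template of the proof of Lemma~\ref{mesbou}, replacing the cardinality bound on a disjoint family of balls by the elementary fact that dilating a disjoint family of balls by a fixed factor yields a family of uniformly bounded overlap. First I would cover $\mathscr{E}_{r,\mathfrak{s}}$ by the family $\mathcal{C}=\{B_r(x):x\in\mathscr{E}_{r,\mathfrak{s}}\}$ and apply Besicovitch's covering lemma in the Riemannian form of \cite[Section 2.8]{federer} to produce $q\le p(n)$ subfamilies $\mathcal{H}_1,\dots,\mathcal{H}_q\subset\mathcal{C}$, each made of pairwise disjoint balls, whose union still covers $\mathscr{E}_{r,\mathfrak{s}}$. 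This gives
$$
\mu\big(\mathscr{E}_{r,\mathfrak{s}}\big)\le\sum_{i=1}^q\sum_{B_r(x)\in\mathcal{H}_i}\mu\big(B_r(x)\big).
$$

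Next, for every center $x\in\mathscr{E}_{r,\mathfrak{s}}$ the defining inequality rearranges to $\mu\big(B_r(x)\big)<r^{\mathfrak{s}}\,\mu\big(B_{2r}(x)\big)$ (this still holds, harmlessly, when $\mu(B_r(x))=0$), so the bound above becomes $\le r^{\mathfrak{s}}\sum_{i=1}^q\sum_{B_r(x)\in\mathcal{H}_i}\mu\big(B_{2r}(x)\big)$. It then suffices to show that for each fixed $i$ one has $\sum_{B_r(x)\in\mathcal{H}_i}\mu\big(B_{2r}(x)\big)\le\kappa(n)$ for a constant depending only on $n$, since then the total is $\le p(n)\,\kappa(n)\,r^{\mathfrak{s}}$. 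For this I would show that the dilated balls $\{B_{2r}(x):B_r(x)\in\mathcal{H}_i\}$ have overlap at most $\kappa(n)$: if a point $z$ belongs to $B_{2r}(x_1),\dots,B_{2r}(x_k)$, then the $x_j$ all lie in $B_{2r}(z)$ and are pairwise at distance $\ge 2r$ (disjointness of the balls $B_r(x_j)$ forces this, since a midpoint of a short path joining two centers at distance $<2r$ would lie in both balls), hence the balls $B_r(x_j)$ are pairwise disjoint and contained in $B_{3r}(z)$, and a volume comparison bounds $k$. Integrating $\sum_j\1_{B_{2r}(x_j)}\le\kappa(n)$ against $\mu$ and using $\mu(M)=1$ gives the claimed estimate, and the lemma follows with $C=p(n)\,\kappa(n)$.

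The only point requiring genuine care — and where I expect to spend the effort — is the volume comparison $\mathrm{vol}\big(B_{3r}(z)\big)/\inf_j\mathrm{vol}\big(B_r(x_j)\big)\le\kappa(n)$ underpinning the overlap bound for the enlarged balls: it is immediate in $\R^n$, but on a Riemannian manifold it must be invoked at a scale on which the local geometry is comparable to the Euclidean one — precisely the same local structure of $M$ that already enters the Riemannian version of Besicovitch's lemma. One therefore either restricts to $r$ below a fixed scale, the complementary range being trivial since then $Cr^{\mathfrak{s}}\ge 1\ge\mu(\mathscr{E}_{r,\mathfrak{s}})$ once $C\ge1$, or absorbs this dependence into the constant, consistently with the way $p(n)$ is used throughout the appendix. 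The measurability of $x\mapsto\mu\big(B_r(x)\big)$, hence of $\mathscr{E}_{r,\mathfrak{s}}$, is routine and I would only mention it in passing.
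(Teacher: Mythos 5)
Your opening moves coincide with the paper's (Besicovitch once, then rearranging the defining inequality to pull out the factor $r^{\mathfrak{s}}$). The divergence is in how you bound $\sum_{x}\mu\big(B_{2r}(x)\big)$ over the centers of one disjoint subfamily: you argue bounded overlap of the dilated balls through the volume comparison $\mathrm{vol}\big(B_{3r}(z)\big)/\inf_j\mathrm{vol}\big(B_r(x_j)\big)\le\kappa(n)$, and this is where the proposal has a genuine gap as a proof of the stated lemma. On a general Riemannian manifold that comparison constant is not a function of $n$ alone: it needs bounded geometry (or compactness of $M$, which the lemma does not assume — note that, unlike Lemma \ref{mesbou}, no compact support of $\mu$ is assumed here either) and a restriction to scales below an $M$-dependent threshold, so at best you obtain $C=C(M,r_0)$ rather than the claimed $C=C(n)$. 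Your proposed disposal of the complementary range does not repair this: for $r_0\le r<1$ the bound $C\,r^{\mathfrak{s}}\ge 1$ fails uniformly in $\mathfrak{s}$, since $r^{\mathfrak{s}}\to 0$ as $\mathfrak{s}\to\infty$ while $C$ must be independent of $\mathfrak{s}$; so that range is not trivial, and there your volume ratio would also have to be controlled at large radii, which on a noncompact or non-uniformly curved manifold it is not. (Your separation step itself is fine: in a length space, disjointness of the $r$-balls does force the centers to be at distance at least $2r$.)

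The paper gets the dimension-only constant with no volume or doubling input by a purely measure-theoretic device which you could adopt: having fixed one disjoint subfamily $\mathcal{H}_i$ with center set $\mathcal{K}_i$, it observes that in the covering $\{B_{2r}(x):x\in\mathcal{K}_i\}$ of $\mathcal{K}_i$ each center is covered by its own ball only (by the $2r$-separation), and then applies Besicovitch a second time to this covering; since no ball can be discarded without uncovering its center, every dilated ball lands in one of at most $p(n)$ subcollections of pairwise disjoint balls, whence $\sum_{x\in\mathcal{K}_i}\mu\big(B_{2r}(x)\big)\le p(n)\,\mu(M)=p(n)$ and $C=p(n)^2$. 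So either restrict your argument to the Euclidean/bounded-geometry, small-$r$ setting and accept a manifold-dependent constant (enough for the paper's application, but weaker than the lemma), or replace the volume comparison by this second application of Besicovitch.
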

\begin{proof}
The family of balls $\mathcal{C}=\{B_{r}(x):x\in \mathscr{E}_{r}\} $ is obviously
a covering of $\mathscr{E}_{r,\mathfrak{s}}$. Therefore 
by the Besicovitch covering Lemma, 
there is a finite number $p(n)$ and $q$ collections of balls belonging to  $\mathcal{C}$,
$\mathcal{H}_{1},\ldots,\mathcal{H}_{q}$ with $q\le p(n)$
such that in each collection $\mathcal{H}_{i}$ the balls are pairwise
disjoint, and the collection of all the balls in all the $\mathcal{H}_{i}$
($1\le i\le q$) cover $\mathscr{E}_{r,\mathfrak{s}}$.
For any $1\le i\le q$, we will denote by $\mathcal{K}_{i}$  the set 
of centers of the balls in $\mathcal{H}_{i}$.

For any $1\le i\le q$, we consider the set of balls   
$\mathcal{C}_{i}=\{B_{2r}(x)\,:\,x\in
\mathcal{K}_{i}\} $. This is obviously a covering of $\mathcal{K}_{i}$
and the main observation is that  each point is covered by only one ball. 
Indeed, if  some $x\in \mathcal{K}_{i}$, belongs to a ball 
$B_{2r}(y)$ with $y\in \mathcal{K}_{i}$, then $d(y,x)\le 2r$ which
implies $y=x$ since otherwise $B_{r}(x)\cap B_{r}(y)\neq\emptyset$. 

Applying once more the Besicovitch Lemma to the covering $\mathcal{C}_{i}$
of $\mathcal{K}_{i}$,  we conclude that there exists 
$q_{i}\le p(n)$ 
collections $\mathcal{H}_{i,1},\ldots,\mathcal{H}_{i,q_{i}}$
of pairwise disjoint balls
 of  $\mathcal{C}_{i}$ such that each collection is  at most countable 
and  the union of all the balls in all these $q_{i}$ collections 
covers $\mathcal{K}_{i}$.

For any $1\le i\le q$ and $1\le \ell\le q_{i}$ we have
$$
\sum_{B\in \mathcal{H}_{i,\ell}}\mu(B)=\mu\left(\bigcup_{B\in
  \mathcal{H}_{i,\ell}}B\right)\le 1\;
$$
which implies 
$$
\sum_{i=1}^{q}\sum_{\ell=1}^{q_{i}}\sum_{B\in
  \mathcal{H}_{i,\ell}}\mu(B)\le p(n)^{2}\;.
$$
Since 
$$
\mathscr{E}_{r,\mathfrak{s}}\subset \bigcup_{i=1}^{q} \bigcup_{x\in 
\mathcal{K}_{i}}B_{r}(x)\;,
$$
we have  
$$
\mu\big(\mathscr{E}_{r,\mathfrak{s}}\big)\le \sum_{i=1}^{q}\sum_{x\in 
\mathcal{K}_{i}}\mu\big(
B_{r}(x)\big)\;.
$$
From the definition of $\mathscr{E}_{r,\mathfrak{s}}$ we get
$$
\mu\big(\mathscr{E}_{r,\mathfrak{s}}\big)\le r^{\mathfrak{s}} \sum_{i=1}^{q}
\sum_{x\in \mathcal{K}_{i}}\mu\big( B_{2r}(x)\big)\le
 r^{\mathfrak{s}} \sum_{i=1}^{q} \sum_{\ell=1}^{q_{i}}\sum_{B\in
  \mathcal{H}_{i,\ell}}\mu(B)
\le r^{\mathfrak{s}}\;p(n)^{2} \;.
$$ 
This finishes the proof of the Lemma with $C=p(n)^{2}$. 
\end{proof}

\begin{lemma}\label{besicodeux}
Let $\lambda_0$ and $\lambda_1$ be two finite positive measures on a $n$-dimensional Riemannian manifold $M$.
For $\omega\in(0,1)$ and $r\in(0,1)$, define the set
$$
\mathscr{C}_{\omega}(\lambda_0,\lambda_1,r)=\left\{x\in M\,\big|\,\lambda_1\big(B_{r}(x)\big)\ge
  \omega \lambda_{0}\big(B_{r}(x)\big)\right\}\;.
$$
There is an integer $p(n)$ such that  
$$
\lambda_{0}\big(\mathscr{C}_{\omega}(\lambda_0,\lambda_1,r)\big)\le p(n) \;\omega^{-1} 
\lambda_1(M).
$$
\end{lemma}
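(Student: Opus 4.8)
The plan is to mimic, and in fact simplify, the proof of Lemma~\ref{besico1}. Write $\mathscr{C}_\omega := \mathscr{C}_{\omega}(\lambda_0,\lambda_1,r)$ for brevity. First I would cover $\mathscr{C}_\omega$ by the family of balls $\mathcal{C}=\{B_r(x): x\in\mathscr{C}_\omega\}$, all of which have the \emph{same} radius $r$, so that no truncation or doubling of radii is needed. Besicovitch's covering Lemma on Riemannian manifolds then provides an integer $p(n)$ (depending only on $n=\dim M$) and subfamilies $\mathcal{H}_1,\dots,\mathcal{H}_q$ of $\mathcal{C}$ with $q\le p(n)$, such that within each $\mathcal{H}_i$ the balls are pairwise disjoint and at most countable in number, while $\mathscr{C}_\omega\subseteq\bigcup_{i=1}^q\bigcup_{B\in\mathcal{H}_i}B$.

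Second, I would chain three elementary facts. (i) By countable subadditivity applied to this covering, $\lambda_0(\mathscr{C}_\omega)\le\sum_{i=1}^q\sum_{B\in\mathcal{H}_i}\lambda_0(B)$. (ii) Every ball $B=B_r(x)$ in $\mathcal{C}$ has its center $x$ in $\mathscr{C}_\omega$, so by the very definition of $\mathscr{C}_\omega$ one has $\lambda_0(B)\le\omega^{-1}\lambda_1(B)$. (iii) For each fixed $i$, the balls of $\mathcal{H}_i$ being pairwise disjoint gives $\sum_{B\in\mathcal{H}_i}\lambda_1(B)=\lambda_1\!\left(\bigcup_{B\in\mathcal{H}_i}B\right)\le\lambda_1(M)$. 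Combining,
$$
\lambda_0(\mathscr{C}_\omega)\le\omega^{-1}\sum_{i=1}^q\sum_{B\in\mathcal{H}_i}\lambda_1(B)\le\omega^{-1}\,q\,\lambda_1(M)\le p(n)\,\omega^{-1}\,\lambda_1(M),
$$
which is exactly the claimed bound.

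There is essentially no serious obstacle here. The only points deserving a word of care are that the Besicovitch subfamilies be at most countable, so that the iterated sums are legitimate (this is part of the version of the covering Lemma used throughout this appendix), and that the defining inequality of $\mathscr{C}_\omega$ is invoked only for balls whose centers lie in $\mathscr{C}_\omega$ — both automatic from the construction. Note that, in contrast to Lemma~\ref{besico1}, a single application of the covering Lemma suffices precisely because all the balls involved share the common radius $r$, so no second covering step (nor any doubling estimate) enters the argument.
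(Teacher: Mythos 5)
Your proposal is correct and is essentially identical to the paper's own proof: a single application of Besicovitch's covering Lemma to the family $\{B_r(x):x\in\mathscr{C}_\omega\}$, followed by subadditivity, the defining inequality at the centers, and disjointness within each of the at most $p(n)$ subfamilies. Your closing remark is also accurate — unlike Lemma \ref{besico1}, no second covering step is needed since all balls share the radius $r$, and the paper's argument does exactly this.
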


\begin{proof}
The family of balls $\mathcal{D}=\{B_{r}(x)\,:\,x\in \mathscr{C}_{\omega}(\lambda_0,\lambda_1,r)\}$
is obviously a covering of $\mathscr{C}_{\omega}(\lambda_0,\lambda_1,r)$.
Therefore, by the Besicovitch covering Lemma,
there is a finite number $p(n)$ and $q$ collections
of balls belonging to  $\mathcal{D}$, denoted by
$\mathcal{H}_{1},\ldots,\mathcal{H}_{q}$, with $q\le p(n)$, and
such that in each collection $\mathcal{H}_{i}$ the balls are pairwise
disjoint, and the collection of all the balls in all the $\mathcal{H}_{i}$
($1\le i\le q$) cover $\mathscr{C}_{\omega}(\lambda_0,\lambda_1,r)$.
For any $1\le i\le q$, we will denote by $\mathcal{K}_{i}$  the set 
of centers of the balls in $\mathcal{H}_{i}$.
Therefore, since the balls in each family are disjoint, we get
\begin{eqnarray*}
\lambda_{0}\big(\mathscr{C}_{\omega}(\lambda_0,\lambda_1,r)\big) 
& \le &
\sum_{i=1}^{q}\sum_{x\in \mathcal{K}_{i}}\lambda_{0}\big(B_{r}(x)\big)
\\
& \le &
\omega^{-1}\sum_{i=1}^{q}\sum_{x\in \mathcal{K}_{i}} \lambda_{1}\big(B_{r}(x)\big)
\\
& \le &
\omega^{-1}\;p(n)\;\lambda_{1}\big(M)\;.
\end{eqnarray*}
\end{proof}

The following corollary holds under the notations of Section \ref{NUDS}.
Its proof is an immediate consequence of the previous lemma.

\begin{corollary}\label{besico2mu}
For any non-negative integer $q$, let $\mu_{q}$ be the measure defined by
$$
\mu_{q}(A)=\sum_{i,R_{i}\ge q+1}^{\infty}
\sum_{j=0}^{R_{i}-1}m\big(T^{-j}\big(A\big)\cap
 \Lambda_{i}\big)\;. 
$$
Note that $\mu_{0}=\mu$, the SRB measure. For $\omega\in(0,1)$ and $r\in(0,1)$, define the set
$$
\mathscr{C}_{\omega}=\left\{x\in\attrac\,\big|\,\mu_{q}\big(B_{r}(x)\big)\ge
  \omega \mu_{0}\big(B_{r}(x)\big)\right\}\;.
$$
There is an integer $p(n)$ such that  
$$
\mu_{0}\big(\mathscr{C}_{\omega}\big)\le p(n) \;\omega^{-1} 
 \sum_{i,R_{i}\ge q+1}R_{i}\;m\big(\Lambda_{i}\big)\;.
$$
\end{corollary}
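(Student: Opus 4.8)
The plan is to simply invoke Lemma~\ref{besicodeux} with the pair of measures $\lambda_0=\mu_0=\mu$ and $\lambda_1=\mu_q$. First I would check that these are genuine finite positive Borel measures on the $n$-dimensional Riemannian manifold $M$, as required by that lemma. The measure $\mu_0$ is the SRB measure, hence a probability measure; and $\mu_q$, being a countable sum of (restrictions of pushforwards of) the finite measure $m$, is a well-defined positive Borel measure whose total mass is
$$
\mu_q(M)=\sum_{i,\,R_i\ge q+1}\ \sum_{j=0}^{R_i-1}m(\Lambda_i)=\sum_{i,\,R_i\ge q+1}R_i\,m(\Lambda_i),
$$
using only $\sum_{j=0}^{R_i-1}1=R_i$. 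This sum is finite by \eqref{srb1} (equivalently $\sum_{p\ge0}m\{R>p\}<\infty$), and a fortiori under the exponential-tail assumption; in any case, should it happen to be infinite there is nothing to prove, so one may freely assume $\mu_q$ is a finite measure.

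Next I would observe that the set $\mathscr{C}_\omega$ of the corollary is literally $\mathscr{C}_\omega(\mu_0,\mu_q,r)$ in the notation of Lemma~\ref{besicodeux} — the restriction $x\in\attrac$ changes nothing since $\mu_0$ is supported on $\attrac$ — and that the standing hypotheses $\omega\in(0,1)$, $r\in(0,1)$ are exactly those under which that lemma applies. Feeding this into Lemma~\ref{besicodeux} yields at once
$$
\mu_0\big(\mathscr{C}_\omega\big)=\lambda_0\big(\mathscr{C}_\omega(\lambda_0,\lambda_1,r)\big)\le p(n)\,\omega^{-1}\,\lambda_1(M)=p(n)\,\omega^{-1}\sum_{i,\,R_i\ge q+1}R_i\,m(\Lambda_i),
$$
which is the assertion.

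There is no genuine obstacle here: the statement is, as announced, an immediate corollary of the previous lemma. The only point deserving a line of justification is the identification of $\mu_q(M)$ with $\sum_{i,R_i\ge q+1}R_i\,m(\Lambda_i)$ and its finiteness, which is routine from the summability of the return-time tail; everything else is a verbatim application of Lemma~\ref{besicodeux}.
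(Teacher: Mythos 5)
Your proposal is correct and follows exactly the paper's route: the paper also obtains the corollary as an immediate application of Lemma~\ref{besicodeux} with $\lambda_0=\mu_0=\mu$ and $\lambda_1=\mu_q$, the only content being the identification $\mu_q(M)=\sum_{i,\,R_i\ge q+1}R_i\,m(\Lambda_i)$, which you justify. Your remarks on the finiteness of $\mu_q(M)$ and on the harmlessness of restricting to $\attrac$ are accurate and complete the argument.
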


%%%%% SECTION
\section{Some technical estimates}

The following lemmas hold under the notations and the assumptions of Section \ref{NUDS}.

\begin{lemma}\label{lespieces}
There is a constant $C>0$ such that for any $\gamma\in \legu$ and any $i$,
we have
$$
\leb_{\gamma}\big(\Lambda_{i}\big)\ge C \ A^{-n_{u}R_{i}}
$$
and
$$
m_{\gamma}\big(\Lambda_{i}\big)\ge C B^{-1}\ A^{-n_{u}R_{i}}\ ,
$$
where $A$ is the constant defined in \eqref{leA} and $B$ is the constant appearing in \eqref{densite}.
\end{lemma}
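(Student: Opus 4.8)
The strategy is to exploit that the return map $T^{R_{i}}$ expands the slice $\gamma\cap\Lambda_{i}$ of the $s$-subset $\Lambda_{i}$ onto a full slice $\gamma'\cap\Lambda$ of $\Lambda$, while its unstable Jacobian over $R_{i}$ iterates is no larger than $A^{n_{u}R_{i}}$; combined with a uniform lower bound on $\leb_{\gamma'}(\gamma'\cap\Lambda)$ this gives both inequalities at once. So I would first record that $c_{0}:=\inf_{\gamma'\in\legu}\leb_{\gamma'}(\gamma'\cap\Lambda)>0$. Fix any $\gamma_{0}\in\legu$; by (P1), $\leb_{\gamma_{0}}(\gamma_{0}\cap\Lambda)>0$. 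For an arbitrary $\gamma'\in\legu$ the stable holonomy $\Theta(x)=\gamma^{s}(x)\cap\gamma'$ is, by the hyperbolic product structure of $\Lambda$, a bijection of $\gamma_{0}\cap\Lambda$ onto $\gamma'\cap\Lambda$, and by (P5)(b) its Radon--Nikodym derivative equals $\prod_{i\ge 0}\det DT^{u}(T^{i}x)/\det DT^{u}(T^{i}\Theta(x))$, whose logarithm has modulus at most $C$ by (P5)(a) taken with $n=0$ (used once for the pair $(x,\Theta(x))$ and once for $(\Theta(x),x)$, which both lie on the same stable leaf). Hence $\leb_{\gamma'}(\gamma'\cap\Lambda)\ge e^{-C}\leb_{\gamma_{0}}(\gamma_{0}\cap\Lambda)=:c_{0}>0$.

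Next, the unstable Jacobian of $T^{R_{i}}$ at a point $y$ of an unstable disk equals $\prod_{k=0}^{R_{i}-1}\det DT^{u}(T^{k}y)$, and since the determinant of a linear isomorphism between $n_{u}$-dimensional inner-product spaces is bounded by the $n_{u}$-th power of its operator norm, each factor is $\le\|DT\|_{L^{\infty}}^{n_{u}}\le A^{n_{u}}$ by \eqref{leA}; thus $\leb_{\tilde\gamma}\big(T^{R_{i}}E\big)\le A^{n_{u}R_{i}}\,\leb_{\gamma}(E)$ for every measurable $E\subseteq\gamma$, where $\tilde\gamma\in\legu$ carries $T^{R_{i}}(\gamma)$. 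Now fix $\gamma\in\legu$ and $i$. Since $\Lambda_{i}$ is an $s$-subset with defining families $\legu$ (the full one) and $\legs_{i}\subset\legs$ by (P2)(a), the slice $\gamma\cap\Lambda_{i}$ is nonempty; pick $x$ in it and set $\gamma'=\gamma^{u}(T^{R_{i}}x)\in\legu$. By the Markov condition (P2)(c), $T^{R_{i}}(\gamma)\supseteq\gamma'$ and $T^{R_{i}}\Lambda_{i}$ is a $u$-subset; as a $u$-subset contains entire stable leaves and $\gamma'$ is one of its defining unstable leaves, $\gamma'\cap T^{R_{i}}\Lambda_{i}=\gamma'\cap\Lambda$, whence
$$
\gamma'\cap\Lambda=\gamma'\cap T^{R_{i}}\Lambda_{i}\subseteq T^{R_{i}}(\gamma)\cap T^{R_{i}}(\Lambda_{i})=T^{R_{i}}(\gamma\cap\Lambda_{i}).
$$
Applying the Jacobian bound to $F:=(\gamma\cap\Lambda_{i})\cap T^{-R_{i}}\gamma'$, for which $T^{R_{i}}F=T^{R_{i}}(\gamma\cap\Lambda_{i})\cap\gamma'\supseteq\gamma'\cap\Lambda$, we get
$$
c_{0}\le\leb_{\gamma'}(\gamma'\cap\Lambda)\le\leb_{\gamma'}\big(T^{R_{i}}F\big)\le A^{n_{u}R_{i}}\,\leb_{\gamma}(F)\le A^{n_{u}R_{i}}\,\leb_{\gamma}(\Lambda_{i}),
$$
which is the first inequality with $C=c_{0}$; the second then follows from $\d m_{\gamma}=\rho_{\gamma}\,\d\leb_{\gamma}$ and $\rho_{\gamma}\ge B^{-1}$, see \eqref{couic}--\eqref{densite}.

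The step I expect to be the most delicate is the inclusion $\gamma'\cap\Lambda\subseteq T^{R_{i}}(\gamma\cap\Lambda_{i})$: it requires unwinding the precise meanings of $s$-subset, $u$-subset and of the Markov return (P2)(c) to make sure $T^{R_{i}}$ really sweeps the entire slice of $\Lambda$ lying over $\gamma'$. The uniform holonomy bound of the first step is the other point where one genuinely needs (P5) rather than treating it as self-evident.
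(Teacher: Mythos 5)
Your proof is correct and follows essentially the same route as the paper's: the Markov property (P2)(c) to send $\gamma\cap\Lambda_i$ onto (at least) a full slice $\gamma'\cap\Lambda$, the bound $A^{n_u R_i}$ on the unstable Jacobian over the return, and a uniform lower bound on $\leb_{\gamma'}(\gamma'\cap\Lambda)$ via the holonomy distortion of (P5), with the density bounds \eqref{couic}--\eqref{densite} giving the statement for $m_\gamma$. The only cosmetic difference is that the paper obtains the uniform lower bound from the normalisation \eqref{munormalise} combined with holonomy comparability, whereas you start from (P1) on a fixed leaf and transfer it by the same holonomy estimate.
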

\begin{proof}
From the Markov property, it follows that $T^{R_{i}}\big(\gamma\cap
\Lambda_{i}\big)=\gamma'\cap \Lambda $ for some $\gamma'\in\legu$. Since the
Jacobian of $T$ is bounded above by $A^{n_{u}}$, we have
$$
 A^{n_{u}R_{i}}\;\leb_{\gamma}\big(\Lambda_{i}\big)\ge 
\leb_{\gamma'}\big(\Lambda\big)\;.
$$ 
By the distorsion property of the Jacobian along the stable holonomy
(see property (P5)(b) in section \ref{NUDS}), there is a constant
$D>1$ such that for any $\gamma''\in\legu$ we have
$$
D^{-1}\leb_{\gamma''}\big(\Lambda\big)\le \leb_{\gamma'}\big(\Lambda\big)\le D
\leb_{\gamma''}\big(\Lambda\big)\;. 
$$
It follows immediately from \eqref{munormalise} that there is a constant
$D'>0$ such that 
$$
\inf_{\gamma''}\leb_{\gamma''}\big(\Lambda\big)\ge D'\;.
$$ 
The first estimate of the lemma follows. The second estimate follows from \eqref{couic}
and \eqref{densite}.
\end{proof}

\begin{lemma}\label{mesureboule}
There exist two constants $C>0$, $\mathfrak{r}>0$ and, for any $r\in(0,1)$, there exists a measurable set $\mathcal{J}_r$
such that
$$
\mu\big(\mathcal{J}_r\big)\leq C\ \Omega(\log (r^{-1})/(4\log A))
$$
and for any $x\in\attrac\backslash \mathcal{J}_r$ we have
$$
\mu\big(B_{r}(x)\big)\le C \ r^\mathfrak{r}.
$$
\end{lemma}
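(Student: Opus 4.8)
The plan is to bound $\mu\big(B_r(x)\big)$ by first throwing away, via a Besicovitch argument, the part of the attractor sitting over towers of large height, and then estimating the remaining ``low'' part directly through the disintegration of $m$ along unstable disks.

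\medskip

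\noindent\emph{Step 1 (discarding high towers).} Fix the scale $s:=\log(r^{-1})/(4\log A)$ (the range of $r$ bounded away from $0$ being trivial: there $r^{\mathfrak r}$ is bounded below and $\mu(B_r(x))\le1$, so one may take $\mathcal J_r=\emptyset$; thus assume $r$ small enough that $\Omega(s)<1/2$). By \eqref{mutour},
$$
\mu\big(B_r(x)\big)=\mu_q\big(B_r(x)\big)+\sum_{i:\,R_i\le q}\ \sum_{j=0}^{R_i-1} m\big(T^{-j}B_r(x)\cap\Lambda_i\big),\qquad q:=\lfloor s\rfloor,
$$
with $\mu_q$ as in Corollary \ref{besico2mu}. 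Since $\sum_{i:\,R_i\ge q+1}R_i\,m(\Lambda_i)\le\Omega(s)^2$, Corollary \ref{besico2mu} with $\omega=\Omega(s)$ gives a set $\mathcal J_r:=\mathscr C_{\Omega(s)}$ with
$$
\mu(\mathcal J_r)\le p(n)\,\Omega(s)=p(n)\,\Omega\big(\log(r^{-1})/(4\log A)\big),
$$
and $\mu_q(B_r(x))<\Omega(s)\,\mu(B_r(x))$ for $x\notin\mathcal J_r$; hence $\mu(B_r(x))\le2\sum_{i:\,R_i\le q}\sum_{j=0}^{R_i-1} m(T^{-j}B_r(x)\cap\Lambda_i)$.

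\medskip

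\noindent\emph{Step 2 (the low part).} I would disintegrate with \eqref{desintm}--\eqref{densite} and estimate $\leb_\gamma\big(T^{-j}B_r(x)\cap\Lambda_i\cap\gamma\big)$ for $\gamma\in\legu$ and $0\le j<R_i$. On $\gamma\cap\Lambda_i$ the separation time is $\ge R_i$, so (P4)(b) yields bounded distortion of $T^j$ (and of $T^{R_i}$) there, with a uniform constant $K$; comparing with the image,
$$
\leb_\gamma\big(T^{-j}B_r(x)\cap\Lambda_i\cap\gamma\big)\le K\,\leb_\gamma(\Lambda_i\cap\gamma)\,
\frac{\leb_{T^j\gamma}\big(B_r(x)\cap T^j\gamma\big)}{\leb_{T^j\gamma}\big(T^j(\Lambda_i\cap\gamma)\big)}.
$$
The numerator is $\le C\,r^{\dimu}$ because unstable disks, and their forward iterates (which remain tangent to the unstable cone), are uniformly bi-Lipschitz onto their projections to the corresponding $\dimu$-dimensional subspaces. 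For the denominator, bounded distortion of $T^{R_i-j}$ on $T^j(\Lambda_i\cap\gamma)$, the bound $|\det DT^u|\le A^{\dimu}$, and the fact from (P2)(c) that $T^{R_i}(\gamma\cap\Lambda_i)$ contains a full unstable slice of $\Lambda$ (of $\leb$-measure $\ge D'>0$, by the argument in the proof of Lemma \ref{lespieces}) give $\leb_{T^j\gamma}\big(T^j(\Lambda_i\cap\gamma)\big)\ge D'A^{-\dimu(R_i-j)}$. Summing the resulting bound over $0\le j<R_i$ (geometric series, using $A\ge2$) and integrating over $\gamma$ via \eqref{couic}--\eqref{densite} yields $\sum_{j=0}^{R_i-1} m(T^{-j}B_r(x)\cap\Lambda_i)\le C\,r^{\dimu}A^{\dimu R_i}m(\Lambda_i)$. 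Finally, summing over $i$ with $R_i\le q$ and using $\sum_i m(\Lambda_i)=1$ gives $C\,r^{\dimu}A^{\dimu q}\le C\,r^{\dimu}r^{-\dimu/4}=C\,r^{3\dimu/4}$ by the choice of $s$. Thus $\mu(B_r(x))\le2C\,r^{3\dimu/4}$ off $\mathcal J_r$, and $\mathfrak r:=3\dimu/4$ works.

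\medskip

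\noindent The main obstacle is the geometric input in Step 2: showing that the iterates $T^j(\gamma\cap\Lambda_i)$, $0\le j<R_i$ with $R_i$ as large as $\mathrm{const}\cdot\log(r^{-1})$, are still uniformly regular embedded disks (so that each meets a ball of radius $r$ in a set of $\leb$-measure $O(r^{\dimu})$), and making precise the lower bound $\leb_{T^{R_i}\gamma}\big(T^{R_i}(\gamma\cap\Lambda_i)\big)\ge D'$ from the $u$-subset structure of (P2)(c); everything else is the distortion bookkeeping above and a single application of Corollary \ref{besico2mu}.
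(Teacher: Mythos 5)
Your Step 1 is exactly the paper's argument: the split of \eqref{mutour} at tower height $s=\log(r^{-1})/(4\log A)$ and a single application of Corollary \ref{besico2mu} with $\omega=\Omega(s)$, giving $\mathcal J_r$ of measure $\le p(n)\,\Omega(s)$. The problem is in Step 2, and it is the point you yourself flag as ``the main obstacle'': the numerator bound $\leb_{T^j\gamma}\big(B_r(x)\cap T^j(\gamma\cap\Lambda_i)\big)\le C\,r^{\dimu}$, uniformly over leaves and over $j$ up to order $\log(r^{-1})$, is not a consequence of the axioms (P1)--(P5). There is no unstable cone field in this framework, and the intermediate images $T^j(\gamma\cap\Lambda_i)$, $0<j<R_i$, are not members of the continuous family $\legu$; only the leaves of $\legu$ carry the uniform regularity (compactness plus continuity of the family) that justifies an estimate of the form $\leb_w(B_\rho\cap w)=O(\rho^{\dimu})$. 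An embedded disk that is merely ``unstable'' in the dynamical sense of backward contraction can a priori fold and re-enter $B_r(x)$ many times, so your appeal to a uniform bi-Lipschitz projection onto a $\dimu$-plane is an extra geometric hypothesis, not something you can invoke here; since you do not prove it, the proof as written is incomplete (your distortion comparison via (P4)(b) and the denominator bound $\ge D'A^{-\dimu(R_i-j)}$ are fine).

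The paper's own proof is built precisely to avoid this point: instead of pushing the leaf forward, it pulls the ball back. Since $\|DT^{-1}\|_{L^\infty}\le A$ by \eqref{leA}, if $T^{-j}B_r(x)$ meets $\Lambda_i$ at some $y$ then $T^{-j}B_r(x)\subset B_{2A^jr}(y)$, so on each original leaf $w\in\legu$ one has $m_w\big(T^{-j}B_r(x)\cap\Lambda_i\big)\le B\,\leb_w\big(B_{2A^jr}(y)\big)\le B\,(2A^jr)^{\dimu}$, which only uses the uniform regularity of the $\legu$-disks themselves; dividing by the lower bound $m_w(\Lambda_i)\ge C B^{-1}A^{-\dimu R_i}$ of Lemma \ref{lespieces} and summing over $j<R_i\le s$ and over $i$ gives the conclusion with $\mathfrak r=\dimu/2$ after the choice $\mathfrak r'=1/(4\log A)$. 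This is cruder than your push-forward computation (you would get $3\dimu/4$), but it stays entirely within the axioms. The fix to your write-up is therefore simple: keep your Step 1 verbatim and replace the push-forward comparison in Step 2 by this pull-back estimate; the sharper exponent should only be claimed under an additional, explicitly stated regularity assumption on forward images of unstable leaves.
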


\begin{proof}
Let $\mathfrak{r}'>0$ to be chosen later on. We have
$$
\mu\big(B_{r}(x)\big)=
\sum_{i=1}^\infty\sum_{j=0}^{R_{i}-1}m\big(T^{-j}
\big(B_{r}(x)\big)\cap \Lambda_{i}\big)
$$
$$
=
\sum_{i,\, R_{i}< \mathfrak{r}'\log (r^{-1})}\sum_{j=0}^{R_{i}-1}m\big(T^{-j}
\big(B_{r}(x)\big)\cap \Lambda_{i}\big)
+ \mu_1\big(B_{r}(x)\big)
$$
where
$$
\mu_1\big(A\big)=
\sum_{i,\, R_{i}\geq \mathfrak{r}'\log (r^{-1})}\sum_{j=0}^{R_{i}-1}m\big(T^{-j}
\big(A\big)\cap \Lambda_{i}\big).
$$
Since $T$ is a diffeomorphism we have (see \eqref{leA})
$$
m\big(T^{-j}
\big(B_{r}(x)\big)\cap \Lambda_{i}\big)\le
m\big(B_{2A^{j}r}(y)\cap \Lambda_{i}\big)
$$
for some $y\in \Lambda_{i}$. 
Using \eqref{densite} we have
$$
m_{w}\big(B_{2A^{j}r}(y)\cap\Lambda_{i}\big)\leq B\ \leb_w\big(B_{2A^{j}r}(y)\big)\leq B\ (2A^{j}r)^{\dimu},
$$
and by \eqref{desintm} and Lemma \ref{lespieces} this implies 
\begin{eqnarray*}
m\big(B_{2A^{j}r}(y)\cap \Lambda_{i}\big)
& = & \int\d\nu(w)\  m_{w}\big(B_{2A^{j}r}(y)\cap \Lambda_{i}\big)\\
&=& \int\d\nu(w)\ \frac{m_{w}\big(B_{2A^{j}r}(y)\cap\Lambda_{i}\big)}{m_{w}\big(\Lambda_{i}\big)}\
m_{w}\big(\Lambda_{i}\big)\\
& \le & \Oun\ r^{\dimu (1-2\mathfrak{r}'\log A)}\int\d\nu(w)m_{w}\big(\Lambda_{i}\big)\\
& = & \Oun\ r^{\dimu (1-2\mathfrak{r}'\log A)} 
m\big( \Lambda_{i}\big).
\end{eqnarray*}
We choose $\mathfrak{r}'=1/(4\log A)$ and $\mathfrak{r}=\dimu/2$.
To finish the proof we apply Corollary \ref{besico2mu} with $q=\mathfrak{r}'\log (r^{-1})+1$ and
$\omega=\Omega(\mathfrak{r}'\log (r^{-1}))$. This finishes the proof.
\end{proof}

\begin{lemma}\label{itervide} For any given integer $k$, for any integer
  $p$,  and any $r>0$ we have
%$$
%\left\{x\;\bigg|\; B_{r}(x)\cap
%T^{k}\big(B_{r}(x)\big)\neq\emptyset\right\}
%\subset  \left\{x\;\bigg|\; B_{(2A^{k}+1)r}(x)\cap
%T^{2k}\big(B_{(2A^{k}+1)r}(x)\big)\neq\emptyset\right\}\;.
%$$
$$
\left\{x\;\bigg|\; B_{r}(x)\cap
T^{k}\big(B_{r}(x)\big)\neq\emptyset\right\}
\subset  \left\{x\;\bigg|\; B_{s_{p}r}(x)\cap
T^{2^{p}k}\big(B_{s_{p}r}(x)\big)\neq\emptyset\right\}\;.
$$
where
$$
s_{p}=2^{p}\;\frac{A^{k2^{p}}-1}{A^{k}-1}\cdot
$$
\end{lemma}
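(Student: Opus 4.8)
The plan is to prove the inclusion by induction on $p\ge 0$, the inductive step being an elementary ``doubling'' estimate that keeps the same witness point throughout.

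First I would isolate the doubling step as a self-contained observation: if $x\in M$, $\rho>0$, $m\ge 1$ is an integer, and $y\in M$ satisfies $d(x,y)\le\rho$ and $d(x,T^m y)\le\rho$, then $d(x,y)\le\rho'$ and $d(x,T^{2m}y)\le\rho'$, where $\rho':=2(A^m+1)\rho$. The first inequality is immediate, since $A^m\ge 1$ forces $\rho'\ge\rho$. For the second one, I use that $T^m$ is $A^m$-Lipschitz for the Riemannian distance (because $\|DT^m\|_{L^\infty}\le\|DT\|_{L^\infty}^m\le A^m$, cf.\ the definition \eqref{leA} of $A$): by the triangle inequality, $d(x,T^m x)\le d(x,T^m y)+d(T^m y,T^m x)\le\rho+A^m\rho$, and then $d(x,T^{2m}y)\le d(x,T^m x)+d\big(T^m x,T^m(T^m y)\big)\le(1+A^m)\rho+A^m\rho=(1+2A^m)\rho\le\rho'$.

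For the induction itself, fix $x$ with $B_r(x)\cap T^k(B_r(x))\neq\emptyset$ and choose a witness $y$, i.e.\ $d(x,y)\le r$ and $d(x,T^k y)\le r$; since $s_0=\frac{A^k-1}{A^k-1}=1$, this is exactly the statement ``$d(x,y)\le s_p r$ and $d(x,T^{2^p k}y)\le s_p r$'' for $p=0$. Assuming this statement for some $p$, I apply the doubling observation with $\rho=s_p r$ and $m=2^p k$ to obtain $d(x,y)\le 2(A^{2^p k}+1)s_p r$ and $d(x,T^{2^{p+1}k}y)\le 2(A^{2^p k}+1)s_p r$. It then remains only to check the arithmetic identity $s_{p+1}=2(A^{k2^p}+1)s_p$: writing $u:=A^{k2^p}$, one has $A^{k2^{p+1}}=u^2$, so that $s_{p+1}=2^{p+1}\frac{u^2-1}{A^k-1}=2(u+1)\cdot 2^p\frac{u-1}{A^k-1}=2(u+1)s_p$. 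Hence $d(x,y)\le s_{p+1}r$ and $d(x,T^{2^{p+1}k}y)\le s_{p+1}r$, which closes the induction. Finally, for each $p$ these two inequalities say precisely that $T^{2^p k}y\in B_{s_p r}(x)\cap T^{2^p k}(B_{s_p r}(x))$, so $x$ lies in the right-hand set of the claimed inclusion.

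I do not expect a genuine obstacle; the only point needing attention is the bookkeeping of constants --- matching the crude triangle-inequality bound $(1+2A^m)\rho$ to the prescribed value $2(1+A^m)\rho$, and recognising the factorisation $A^{k2^{p+1}}-1=(A^{k2^p}-1)(A^{k2^p}+1)$ behind the recursion for $s_p$ --- together with the tacit use that $k$ is a positive integer, so that $A^k-1>0$ and $A^{2^p k}\ge A\ge 2$.
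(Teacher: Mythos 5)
Your proof is correct and follows essentially the same route as the paper's: the same combination of the Lipschitz bound $\|DT^m\|_\infty\le A^m$ and the triangle inequality through $d(x,T^m x)$ gives the doubling step, and the paper's ``general case follows by induction'' is exactly your recursion $s_{p+1}=2\big(A^{k2^p}+1\big)s_p$, which you verify explicitly. Your version merely makes the induction and the fixed witness point explicit, which is a welcome clarification rather than a different method.
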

\begin{proof}
We first consider the case $p=1$. 
Let $x$ be such that $B_{r}(x)\cap
T^{k}\big(B_{r}(x)\big)\neq\emptyset$. This implies
$T^{k}\big(B_{r}(x)\big)\cap
T^{2k}\big(B_{r}(x)\big)\neq\emptyset$. Moreover there exists $z\in
B_{r}(x)$  such that $T^{k}(z)\in B_{r}(x)$. For any $u\in
T^{k}\big(B_{r}(x)\big)$, there is a  $v\in
B_{r}(x)$  such that $T^{k}(v)=u$. Therefore 
$$
d\big(u,T^{k}(z)\big)=d\big(T^{k}(v),T^{k}(z)\big)\le A^{k}d(v,z)
\le 2 A^{k}r\;.
$$
This implies by the triangle inequality
$$
d(u,x)\le d\big(u,T^{k}(z)\big)+d\big(x,T^{k}(z)\big)\le \big(2A^{k}+2\big)r\;.
$$
In other words $T^{k}\big(B_{r}(x)\big)\subset B_{(2A^{k}+2)r}(x)$. 
From the obvious inclusion
$T^{2k}\big(B_{r}(x)\big)\subset T^{2k}\big(B_{(2A^{k}+2)r}(x)\big)$, the
case $p=1$ follows, namely
\begin{align*}
& \left\{x\;\bigg|\; B_{r}(x)\cap
T^{k}\big(B_{r}(x)\big)\neq\emptyset\right\}\\
&
\qquad\qquad\subset  \left\{x\;\bigg|\; B_{2(A^{k}+1)r}(x)\cap
T^{2k}\big(B_{2(A^{k}+1)r}(x)\big)\neq\emptyset\right\}\;.
\end{align*}
The general case follows by induction.  
\end{proof}
%%%%%%%%%%%%%%%%%%%%%%%%%%%%%%%%%%%%%%%%%%%

\begin{lemma}
There exists a   constant $0<r_0<1$ such that for all $r\in (0,r_0)$, for all
$i$ such that $R_i\leq (\log (r^{-1}))/(4\log A)$,
 for all $0\leq j <R_i$, for every $x\in\attrac$, 
for any $\gamma_0\in \legu$, we have
$$
\textup{Card}\big\{
\gamma\in\legu \big| \gamma\cap \Lambda \subset T^{R_i}(\gamma_0\cap \Lambda_i)
\;\;\textup{and}\;\;\gamma\cap \Lambda \cap T^{R_i-j}(B_r(x))\neq\emptyset\big\}\leq 1.
$$
\end{lemma}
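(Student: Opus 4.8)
The plan is to argue by contradiction. Suppose that for some admissible data there were two \emph{distinct} leaves $\gamma_1,\gamma_2\in\legu$ with $\gamma_a\cap\Lambda\subset T^{R_i}(\gamma_0\cap\Lambda_i)$ and $\gamma_a\cap\Lambda\cap T^{R_i-j}\big(B_r(x)\big)\neq\emptyset$ for $a=1,2$. The first observation is that everything takes place inside one unstable disk: since $\gamma_0\cap\Lambda_i\subset\gamma_0$ and $T$ is a diffeomorphism, $T^{R_i}(\gamma_0\cap\Lambda_i)$ is contained in the embedded disk $\widetilde\gamma_0:=T^{R_i}(\gamma_0)$, which is itself an unstable disk by (P2)(c) together with the distortion bounds (P4)(b) and (P5)(a). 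By uniqueness of local unstable manifolds, each $\gamma_a$ coincides with $\widetilde\gamma_0$ on a relatively open neighbourhood of $\gamma_a\cap\Lambda$ in $\widetilde\gamma_0$; hence, once one shows that $\gamma_1\cap\Lambda$ and $\gamma_2\cap\Lambda$ lie in the \emph{same} leaf of $\legu$, pairwise disjointness of $\legu$ forces $\gamma_1=\gamma_2$, a contradiction. (If one invokes the Markov/refinement property of the construction of \cite{young1} — namely that $T^{R_i}$ carries the unstable slice $\gamma_0\cap\Lambda_i$ onto a single unstable slice $\gamma^{\ast}\cap\Lambda$ of the $u$-subset $T^{R_i}(\Lambda_i)$ — the conclusion is immediate and the hypotheses on $R_i$, $j$ and $r_0$ are not even needed; what follows is the self-contained version that does use them.)

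So it remains to locate $\gamma_1\cap\Lambda$ and $\gamma_2\cap\Lambda$ relative to each other, and here the hypothesis $R_i\le\log(r^{-1})/(4\log A)$ enters through a diameter estimate. Fix $p_a\in\gamma_a\cap\Lambda\cap T^{R_i-j}(B_r(x))$. Since $\|DT\|_{L^{\infty}}\le A$ (recall \eqref{leA}) and $B_r(x)$ has diameter at most $2r$, every iterate $T^m(B_r(x))$ has diameter at most $2A^mr$ for $r<r_0$ small enough that the orbit stays in a region where this comparison holds; with $0\le R_i-j\le R_i\le\log(r^{-1})/(4\log A)$ one has $A^{R_i}\le r^{-1/4}$, whence
\[
d(p_1,p_2)\ \le\ \mathrm{diam}\,T^{R_i-j}\big(B_r(x)\big)\ \le\ 2A^{R_i}r\ \le\ 2\,r^{3/4}.
\]
Set $z_a:=T^{-R_i}(p_a)$; this is well defined in $\gamma_0\cap\Lambda_i$ because $p_a\in T^{R_i}(\gamma_0\cap\Lambda_i)$ and $T$ is injective, and $T^j(z_a)=T^{-(R_i-j)}(p_a)\in B_r(x)$. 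Since $z_1,z_2$ lie on the common unstable disk $\gamma_0$, we have for every $m\ge0$ that $d(T^{-m}p_1,T^{-m}p_2)=d(T^{R_i-m}z_1,T^{R_i-m}z_2)\longrightarrow0$ as $m\to\infty$, exponentially fast once $m>R_i$; thus $p_1$ and $p_2$ are backward asymptotic, so they lie on one global unstable manifold and, by the first paragraph, on $\widetilde\gamma_0$. Using the almost-flatness of $\widetilde\gamma_0$ (bounded distortion of $T^{R_i}|\gamma_0$), the intrinsic distance along $\widetilde\gamma_0$ between $p_1$ and $p_2$ is then $\le C\,r^{3/4}$.

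The final step — and the only genuine obstacle — is to deduce from this that $p_1$ and $p_2$ belong to the same leaf of $\legu$. This uses the uniform lower bound $\delta_0>0$ on the size of the local unstable manifolds in $\legu$ and, crucially, the fact that $\gamma_1\cap\Lambda$ and $\gamma_2\cap\Lambda$ both lie in the returned slice $T^{R_i}(\gamma_0\cap\Lambda_i)$: two distinct leaves of $\legu$ meeting one and the same unstable slice of a $u$-subset have their $\Lambda$-parts separated inside $\widetilde\gamma_0$ by at least a fixed fraction of $\delta_0$ (this is where the $s$-subset structure, (P2)(c) and the separation-time axioms (3)--(4) of Section \ref{NUDS} are used, and it is precisely the statement one would otherwise read off directly from the construction in \cite{young1}). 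Taking $r_0$ so small that $C\,r_0^{3/4}$ is below that fixed amount contradicts the estimate of the previous paragraph, so $\gamma_1=\gamma_2$ and the cardinality is at most one. Thus the bulk of the work is this separation statement; the rest is routine bookkeeping with the axioms.
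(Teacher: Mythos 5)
Your setup (two distinct leaves, the bound $A^{R_i}\le r^{-1/4}$ giving $d(p_1,p_2)\le 2r^{3/4}$) parallels the paper's, but the argument stops exactly where the real work begins. The separation statement you invoke in the final step --- that two distinct leaves of $\legu$ whose $\Lambda$-parts both lie in $T^{R_i}(\gamma_0\cap\Lambda_i)$ are separated inside $\widetilde\gamma_0=T^{R_i}(\gamma_0)$ by a fixed fraction of a uniform size $\delta_0$ --- is asserted, not proved, and it does not follow from the axioms of Section \ref{NUDS}: the transversal to the family $\legu$ is typically a Cantor set, so two distinct leaves of $\legu$ can be arbitrarily close to each other, and any uniform gap would have to come precisely from the containment in the image of a single slice, i.e.\ from the Markov refinement property of Young's construction that you explicitly set aside in your parenthesis. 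As written, the ``self-contained version'' is therefore circular: its key claim is essentially equivalent to the lemma itself, and you acknowledge as much (``the bulk of the work is this separation statement''). There is a second, smaller gap: from the ambient estimate $d(p_1,p_2)\le 2r^{3/4}$ you deduce that the \emph{intrinsic} distance along $\widetilde\gamma_0$ is $\le C r^{3/4}$; this is not justified, because $\widetilde\gamma_0$ may have length of order $A^{R_i}\sim r^{-1/4}$ and nothing prevents it from nearly folding back on itself, so closeness in $M$ does not imply closeness along the disk.

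For comparison, the paper's proof needs no uniform gap between leaves and no intrinsic-distance comparison. It takes $M_a\in T^{j-R_i}(\gamma_a\cap\Lambda)\cap B_r(x)$, gets $d\big(T^{R_i-j}(M_1),T^{R_i-j}(M_2)\big)\le 2r^{3/4}$ as you do, then uses the product structure transversally: it projects $T^{R_i-j}(M_1)$ along its stable leaf onto $\gamma_2$, obtaining $P=\gamma_2\cap\gamma^s\big(T^{R_i-j}(M_1)\big)\cap\Lambda$ with $d\big(T^{R_i-j}(M_1),P\big)\le 2Cr^{3/4}$ by the uniform transversality of $\legu$ and $\legs$. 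Pulling back by $T^{R_i}$, which expands stable distances by at most $A^{R_i}\le r^{-1/4}$, the stable connection stays inside one local stable disk $D_0\subset\gamma^s\big(T^{-j}(M_1)\big)$ once $r<r_0$. Then $T^{-R_i}(P)$ and $T^{-j}(M_1)$ are two distinct points of the single unstable leaf $\gamma_0$ lying on the same stable disk, contradicting axiom 3 of the hyperbolic product structure (each $\gamma^u$-disk meets each $\gamma^s$-disk in exactly one point). To repair your proof you must either actually prove your separation claim (which in effect amounts to the Markov property you declined to use) or switch to a stable-holonomy argument of this type, where the hypotheses $R_i\le\log(r^{-1})/(4\log A)$ and $r<r_0$ serve exactly to keep the stable connection within the local product structure.
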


\begin{proof}
Let us assume that the above cardinality is greater than one. So let $\gamma_1\neq\gamma_2$
with
$$
\gamma_1,\gamma_2\in
\big\{
\gamma\in\legu \big| \gamma\cap \Lambda \subset T^{R_i}(\gamma_0\cap \Lambda_i)
\;\;\textup{and}\;\;\gamma\cap \Lambda \cap T^{R_i-j}(B_r(x))\neq\emptyset\big\}.
$$
Let $M_1\in T^{j-R_i}(\gamma_1\cap\Lambda)\cap B_r(x)$ and 
$M_2\in T^{j-R_i}(\gamma_2\cap\Lambda)\cap B_r(x)$.
Since $M_{1}$ and $M_{2}$ belong to the ball $B_r(x)$ we have
$$
d\big(T^{R_i-j}(M_{1}),T^{R_i-j}(M_{2})\big)\le
A^{R_i-j}\;d\big(M_{1},M_{2}\big)\le 2r^{3/4} \;.
$$
Let $P=\gamma_{2}\cap \gamma^{s}(T^{R_i-j}(M_{1}))\cap\Lambda$ (there is
one and only one such point by property 3) of $\legu$ and $\legs$).
Since the elements of  $\legu$ and $\legs$ are uniformly embedded
regular disks  with angles bounded away from zero (cf. property 2) of $\legu$ and $\legs$),
we conclude that there is a constant $C>0$ such that uniformly in $r$ small enough,
$M_{1}$ and $\gamma_{2}$,  we have
$$
d\big(T^{R_i-j}(M_{1}),P\big)\le 2\,C\,r^{3/4}\;.
$$
Let
$$
D_{0}=\gamma^{s}\big(T^{-j}(M_{1})\big)
\cap B_{4\,C\,r^{1/3}}\big(T^{-j}(M_{1})\big)\, .
$$
There exists a constant $C'>0$ such that uniformly in $r$ small enough
and in $M_{1}$ we have by \eqref{leA}
$$
T^{R_{i}}\big(D_{0}\big)\supset 
\gamma^{s}\big(T^{R_{i}-j}(M_{1})\big)
\cap B_{C'\, r^{5/6}}\big(T^{R_{i}-j}(M_{1})\big)\;.
$$
Hence, for a uniform $r_0$ small enough and any $r<r_0$, $P\in T^{R_{i}}\big(D_{0}\big)$.
This implies $T^{-R_{i}}(P)$ and $T^{-j}(M_{1})$ belong to
$\gamma_0\cap D_{0}$. This is a contradiction with
property 3) of $\legu$ and $\legs$, and the lemma is proved. 
\end{proof}

\begin{lemma}\label{lev}
Let
$$
\mathscr{V}_{p}(x,r)=\bigcup_{i}\bigcup_{j=0}^{R_{i}-1}
\bigcup_{\stackrel{\gamma\in\legs}{T^{p+j}(\gamma\cap\Lambda_{i})\cap
\partial B_{r}(x)\neq \emptyset}}T^{j}\big(\gamma\cap\Lambda_{i}\big).
$$
Then for all $x\in \attrac$, for all $r\in(0,1)$ and for all $p$, we have
$$
\mu\big(\mathscr{V}_{p}(x,r)\big)
\le \mu\big(\widetilde{\mathscr{C}}_{r,p\log\alpha/\log r}(x)\big)
\;,
$$ 
where $\widetilde{\mathscr{C}}_{r,p\log\alpha/\log r}(x)$ is the corona
$$
\widetilde{\mathscr{C}}_{r,p\log\alpha/\log r}(x)= B_{r+\alpha^{p}}(x)\backslash B_{r-\alpha^{p}}(x)\;.
$$
\end{lemma}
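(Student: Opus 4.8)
The plan is to establish the pointwise set inclusion
$$
\mathscr{V}_{p}(x,r)\subseteq T^{-p}\big(\widetilde{\mathscr{C}}_{r,p\log\alpha/\log r}(x)\big),
$$
after which the asserted measure bound is immediate from the $T$-invariance of $\mu$, since $\mu\big(T^{-p}(\widetilde{\mathscr{C}}_{r,p\log\alpha/\log r}(x))\big)=\mu\big(\widetilde{\mathscr{C}}_{r,p\log\alpha/\log r}(x)\big)$. The content of this inclusion is that, although the individual pieces $T^{j}(\gamma\cap\Lambda_{i})$ entering the definition of $\mathscr{V}_{p}(x,r)$ may lie far from $x$, after $p$ further iterates of $T$ they are squeezed into a thin corona around the sphere $\partial B_{r}(x)$ by the contraction along stable disks.

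To carry it out, take $w\in\mathscr{V}_{p}(x,r)$. Unwinding the definition, there are an index $i$, an integer $0\le j<R_{i}$, a stable disk $\gamma\in\legs$ with $T^{p+j}(\gamma\cap\Lambda_{i})\cap\partial B_{r}(x)\neq\emptyset$, and a point $v\in\gamma\cap\Lambda_{i}$ with $w=T^{j}(v)$; choose in addition $v'\in\gamma\cap\Lambda_{i}$ with $T^{p+j}(v')\in\partial B_{r}(x)$, so that $d\big(T^{p+j}(v'),x\big)=r$. The points $v$ and $v'$ lie in the same stable disk $\gamma\subset\Lambda$, hence (P3) of Section \ref{NUDS} applies (with the contraction constant normalized to $1$) and yields
$$
d\big(T^{p+j}(v),T^{p+j}(v')\big)\le\alpha^{p+j}\le\alpha^{p}.
$$
Since $T^{p}(w)=T^{j+p}(v)$, the triangle inequality gives $\big|d(T^{p}(w),x)-r\big|\le\alpha^{p}$, i.e. $T^{p}(w)\in B_{r+\alpha^{p}}(x)\setminus B_{r-\alpha^{p}}(x)$. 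Recalling that $r^{\,p\log\alpha/\log r}=\alpha^{p}$, this is exactly the corona $\widetilde{\mathscr{C}}_{r,p\log\alpha/\log r}(x)$, so $w\in T^{-p}\big(\widetilde{\mathscr{C}}_{r,p\log\alpha/\log r}(x)\big)$, which is the desired inclusion.

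The argument is essentially soft; the only points deserving care are verifying that $v,v'$ form an admissible pair for (P3) — they both belong to $\Lambda_{i}\subset\Lambda$ and, lying in the same stable disk $\gamma$, satisfy $v'\in\gamma^{s}(v)$ — and keeping track of the fact that the relevant iterate is $T^{p+j}$ rather than $T^{p}$, so that the extra $j$ steps only help the contraction and the corona half-width comes out to be $\alpha^{p}$ uniformly in $i$ and $j$. This last uniformity is precisely what is needed when the lemma is fed into the decay-of-correlations step of Proposition \ref{decornuh}.
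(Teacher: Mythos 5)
Your proof is correct and follows essentially the same route as the paper: the pointwise inclusion $\mathscr{V}_{p}(x,r)\subset T^{-p}\big(\widetilde{\mathscr{C}}_{r,p\log\alpha/\log r}(x)\big)$ via the uniform contraction (P3) along stable disks, followed by $T$-invariance of $\mu$. Your explicit normalization of the constant in (P3) to $1$ is the same implicit convention the paper uses when it states the corona half-width as exactly $\alpha^{p}$, so there is no substantive difference.
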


\begin{proof}
If $T^{p+j}\big(\gamma\cap\Lambda_{i}\big)\cap B_r(x)\neq \emptyset$ then,
by the uniform contraction of stable manifolds (see condtion (P3) in section{NUDS}), we have
$$
T^{p+j}\big(\gamma\cap\Lambda_{i}\big)\subset 
\widetilde{\mathscr{C}}_{r,p\log\alpha/\log r}(x).
$$
Therefore
$$
\bigcup_{\stackrel{\gamma\in\legs}{T^{p+j}\big(\gamma\cap\Lambda_{i}\big)\cap
\partial B_{r}(x)\neq \emptyset}}T^{j}\big(\gamma\cap\Lambda_{i}\big)
\subset T^{-p}\big( \widetilde{\mathscr{C}}_{r,p\log\alpha/\log r}(x)\big),
$$
whence
$$
\mathscr{V}_{p}(x,r) 
\subset T^{-p}\big( \widetilde{\mathscr{C}}_{r,p\log\alpha/\log r}(x)\big).
$$
This implies by the invariance of $\mu$
$$
\mu\left(
\mathscr{V}_{p}(x,r) 
\right)\le 
\mu\big(\widetilde{\mathscr{C}}_{r,p\log\alpha/\log r}(x)\big)\;.
$$
\end{proof}

%%%%%%%%%%%%%%%%%

\end{document}